\documentclass[12pt,reqno]{amsart}
\usepackage[utf8]{inputenc}
\usepackage[vmargin=2.5cm,hmargin=3cm]{geometry} 
\usepackage[british]{babel}

\title{Conflict-avoiding codes and near-primitive roots}
\author[Karthick Babu]{C. G. Karthick Babu}
\address{School of Mathematics and Computer Science, Indian Institute of Technology Goa, Farmagudi, Ponda-403401, Goa, India}
\email{karthick@iitgoa.ac.in}
\author[Moree]{Pieter Moree}
\address{Max Planck Institute for Mathematics, Vivatsgasse 7, 53111 Bonn, Germany}
\email{moree@mpim-bonn.mpg.de}
\author[Pasupulati]{Sunil Kumar Pasupulati}
\address{Department of Mathematics, Sri A.S.N.M. Govt. College (A), Palakollu, West Godavari Dist.- 534260, Andhra Pradesh, India.}
\email{sunilkumarpasupulati@gmail.com}
\author[Sgobba]{Pietro Sgobba}
\address{Division of Natural and Applied Sciences, Duke Kunshan University, 8 Duke Avenue, Kunshan, China 215316}
\email{pietro.sgobba@dukekunshan.edu.cn}
\date{}

\usepackage{amssymb,latexsym}
\usepackage{comment}
\usepackage{titletoc}
\usepackage{amsfonts, amsmath, amsthm, fancybox, graphicx, color, times, babel}
\usepackage{enumitem}

\usepackage{multirow}

\usepackage[usenames,dvipsnames]{xcolor}

\usepackage[all,cmtip]{xy}
\newtheorem {thm}{Theorem}
\newtheorem* {thm*}{Theorem}

\newtheorem {cor}{Corollary}

\newtheorem* {cor*}{Corollary}
\newtheorem {lem}{Lemma}

\newtheorem {prop}{Proposition}
\newtheorem {defi}{Definition}

\theoremstyle{definition}
\newtheorem{rem}{Remark}

\newtheorem* {conj*}{Conjecture}

\newtheorem* {quest*}{Question}

\newcommand{\mc}{\mathcal}
\newcommand{\p}{\mathfrak{p}}
\newcommand{\Q}{\mathbb{Q}}
\newcommand{\F}{\mathbb{F}}
\newcommand{\Z}{\mathbb{Z}}

\DeclareMathOperator{\ind}{ind}
\DeclareMathOperator{\ord}{ord}
\DeclareMathOperator{\Li}{Li}

\DeclareMathOperator{\lcm}{lcm}

\definecolor{carmine}{rgb}{0.7, 0.1, 0.09}

\usepackage{mathtools}

\DeclarePairedDelimiterX\kronecker[2]{\lparen}{\rparen}{\frac{#1}{#2}}

\newcommand\legendre[2]{\kronecker{#1}{#2}}

\newcommand\biglegendre[2]{\kronecker*{#1}{#2}}

\usepackage[pdftex, draft=false, backref = page]{hyperref}
\hypersetup{
    colorlinks = true,
    linkcolor = blue,
    urlcolor = blue, 
    citecolor = blue}

\renewcommand*{\backrefalt}[4]{%
\ifcase #1 %
\color{red} No citations.%
\or
(p.~#2).%
\else
(pp.~#2).%
\fi
}



\usepackage{microtype}

\makeatletter
\patchcmd{\env@cases}{1.2}{1}{}{}
\makeatother

\usepackage{mathrsfs}

\begin{document}

\subjclass{Primary: 94B25; Secondary: 11N37}
\keywords{Conflict-avoiding codes, near-primitive root, multiplicative order,
asymptotic counting, graph, irreducibility, Cauchy-Frobenius identity}

\maketitle

\begin{abstract}
An important role in the theory of
conflict-avoiding codes is played
by a quantity involving the multiplicative order of $4$ modulo $d$, with $d$ running over all positive divisors of an odd integer $n$. 
We establish results on its distribution  as $n$ varies over
the odd prime numbers, respectively odd integers, using techniques from the study of Artin's primitive root conjecture. 
We also point out some other interpretations of this quantity, 
with one of them making a connection with the factorization of $X^n-1$ into irreducibles over the finite field $\mathbb F_2$.
\end{abstract}
\section{Introduction}
Conflict-avoiding codes have been investigated as protocol sequences for use in a multiple-access (collision) channel without feedback \cite{GV93, Mat90, NGM92, Lev07, TR02}. For a detailed technical description of this multiple-access channel model, we refer the reader to \cite{BP83, MM85}.

In mathematical terms, the definition is as follows: Given any positive integer $n$, we let $\Z_n$  denote the ring of integers modulo $n$, and let $\mathcal{P}(n, k)$ represent the set of all $k$-element subsets of $\Z_n$.
Each element $x \in \mathcal{P}(n, k)$ can be associated with a binary sequence of length $n$ and weight $k$, indicating the positions of the nonzero entries.
For a given subset $x \in \mathcal{P}(n, k)$, we define its difference set as follows: 
\[
\Delta(x)\coloneq \{a-b \pmod{n}: a,b \in x, \ a\neq b\}.
\]

Note that $|\Delta(x)| \leq k(k - 1)$,
and if $i \in \Delta(x)$, then $n - i \in \Delta(x)$ as well; that is, $\Delta(x)$ is symmetric with respect to $n/2$. A \emph{conflict-avoiding code} ($\mathrm{CAC}$) of length $n$ and weight $k$ is defined as a subset $\mathcal{C} \subset \mathcal{P}(n, k)$ such that for any distinct $x, y \in \mathcal{C}$, the intersection $\Delta(x) \cap \Delta(y)$ is empty. Each element $x \in \mathcal{C}$ is referred to as a \emph{codeword} of length $n$ and weight $k$. Without loss of generality, we can assume that all codewords contain $0$. For example $\mathcal{C}=\{\{0,1,6\}, \{0,4,8\}\}$ is a $\mathrm{CAC}$ of length 11 and weight 3, 
with the codewords corresponding to 
the binary protocol sequences 
$(1,1,0,0,0,0,1,0,0,0,0)$, respectively $(1,0,0,0,1,0,0,0,1,0,0)$.

For given $n$ and $k$, let $\mathrm{CAC}(n,k)$ denote the class of all $\mathrm{CAC}$s of length $n$ with weight $k$. The maximum possible 
size of the codes in $\mathrm{CAC}(n,k)$ will be denoted by $M(n,k)$, i.e.,
\[
M(n,k)\coloneq \max\{|\mathcal{C}| : \mathcal{C} \in \mathrm{CAC}(n,k)\}.
\]
A code $\mathcal{C} \in \mathrm{CAC}(n,k)$ is said to be \emph{optimal}, if $|\mathcal{C}|=M(n,k)$. The above example is optimal in $\mathrm{CAC}(11,3)$, that is, $M(11,3)=2$ (as is easily deduced using the Pigeonhole Principle). 
Zhang, Lo and Wong~\cite{ZLW} 
constructed several families of optimal strongly conflict-avoiding codes of even length and weight three.

\subsection{Equi-difference conflict-avoiding codes}
 A codeword $x \in P(n, k)$ is said to be \emph{equi-difference} (or \emph{centered} in the case $k = 3$) with generator $i \in \Z_n \setminus \{0\}$ if it has the form
\[
x = x_i = \{0, i, 2i, \dotsc, (k - 1)i\} \pmod{n},
\]
and consists of $k$ distinct residues modulo $n$.
A code $\mathcal{C}\subset P(n, k)$ is called an \emph{equi-difference conflict-avoiding code} if every codeword in $\mathcal{C}$ is equi-difference. It is  denoted $ \mathrm{CAC^e}(n, k) $. Note that for an equi-difference codeword \( x_i \), we have
\[
\Delta(x_i) = \{ \pm ji \pmod{n} :  1 \leq j \leq k-1 \},
\]
which implies that \( |\Delta(x_i)| \leq 2(k - 1) \). A codeword satisfying \( |\Delta(x_i)| \leq 2(k - 1) \) is said to be \emph{exceptional}. Moreover, an optimal code \( \mathcal{C} \in \mathrm{CAC}(n, k) \) is said to 
be \emph{tight} if 
\[
\cup_{x \in \mathcal{C}} \Delta(x)= \Z_{n} \setminus\{0\}.
\]
The 
maximum code size 
in $\mathrm{CAC}^{e}(n,k)$ will be denoted by $M^{e}(n,k)$, i.e.,
\[
M^{e}(n,k)=\max\{|\mathcal{C}| : \mathcal{C} \in \mathrm{CAC}^{e}(n,k)\}.
\]

For the remainder of this section we take $k=3$.
It is easy to verify that a general codeword \( x \) satisfies
\[
2 \leq |\Delta(x)| \leq 6.
\]
Moreover, if \( |\Delta(x)| = 4 \), then \( x \) must be an equi-difference codeword. 
For exceptional codewords, we have:
\begin{itemize}
    \item $|\Delta(x)| = 2$ if and only if $3 \mid n$ and  $x = \{0, n/3, 2n/3\}$,
    \item $|\Delta(x)| = 3$ if and only if $4 \mid n$ and $x = \{0, n/4, n/2\}$,
\end{itemize}
and both of these are also equi-difference. 

\subsection{A graph associated to  \texorpdfstring{$\mathrm{CAC}(n, 3)$}{CACn3}}
 For simplicity of notation, we write $\mathrm{CAC}(n)$ and $M(n)$ instead of $\mathrm{CAC}(n, 3)$, 
respectively $M(n, 3)$. Likewise, we denote $\mathrm{CAC}^{e}(n, 3)$ and $M^{e}(n, 3)$ by $\mathrm{CAC}^{e}(n)$, respectively $M^{e}(n)$. 
Following  Momihara \cite{Momi} we now define a graph $G_n$ connected with equi-difference codewords. A closely related graph-theoretic description is also used by Mishima and Momihara \cite{miMo} in
their construction of a new family of optimal tight conflict-avoiding codes.
\begin{defi}
Given an odd natural number $n$, define a graph $G_n = (V, E)$ as follows:
	\begin{itemize}
		\item The vertex set is 
		\[
		V = \{1, 2, \dotsc, (n-1)/2\}.
		\]
		\item The edge set $E \subseteq V \times V$ consists of ordered pairs $(a,b)$ such that
		\[
		b \equiv \pm 2a \pmod{n}.
		\]
	\end{itemize}
\end{defi}
A vertex $i \in V(G_n)$ can be viewed as representing the difference $\pm i$, and thus, an edge $(a, b)$ corresponds to the equi-difference codeword ${0, a, 2a}$. A loop in $G_n$ represents the special equi-difference codeword ${0, n/3, 2n/3}$, which arises only when $3 \mid n$. The graph $G_n$ decomposes as a collection of disjoint cycles. This follows from the fact that the edge condition defines a permutation on the vertex set $V$. By identifying $x$ and $-x$, the set $V$ represents the non-zero elements of $\Z_n$ up to sign. The edge condition $b \equiv \pm 2a \pmod{n}$ acts as multiplication by 2 in this quotient space. Since by assumption $n$ is odd, the number $2$ is invertible modulo $n$. Consequently, the mapping $a \mapsto \pm 2a \pmod{n}$ is a bijection, guaranteeing that every vertex in $G_n$ has an in-degree and out-degree of exactly 1. 
Equivalently, the reachability between vertices can be formalized by the relation $a \sim b$ if and only if $b \equiv \pm 2^r a \pmod{n}$ for some integer $r$. Because 2 is invertible in $\Z_n$, this satisfies the properties of an equivalence relation. The resulting equivalence classes partition $V$ into distinct connected components, which strictly correspond to the disjoint cycles of the graph.

\begin{defi}\label{defn of On}
Given any odd natural number $n$, we let $\mc{O}(n)$ denote the number of  cycles of odd length in $G_n$, where a loop is considered a cycle of length 1.
For convenience, we put $\mc{O}(1) = 0$.
\end{defi}
\noindent For example, $G_{21}$ has three cycles: $(7)$, $(3, 6, 9)$, and $(1, 2, 4, 8, 5, 10)$, 
and hence $\mc{O}(21) = 2$.  The cycle $(7)$ is considered a loop.

Recall that for any two distinct codewords $x \neq y$, the condition $\Delta(x) \cap \Delta(y) = \varnothing$ must be satisfied. To construct as many codewords as possible, we aim to maximize the number of disjoint vertex subsets, where each vertex subset $S$ falls into one of the following three categories:
\begin{enumerate}[label=(\alph*)]
    \item $|S| = 1$: the induced subgraph is a loop, \label{loop}
    \item $|S| = 2$: the induced subgraph is a single edge, \label{equi-diff}
    \item $S = \{a, b, c\}$ such that $a + b \equiv \pm c \pmod{n}$. \label{non equi-diff}
\end{enumerate}
Note that for any odd integer  $n$, the graph $G_n$ contains at most one loop. Since each equi-difference codeword corresponds to an edge in $G_n$, constructing an optimal code $\mathcal{C} \in \mathrm{CAC}^{e}(n)$ amounts to finding a maximal matching in  $G_n$, excluding the loop (recall that a \emph{matching} $M$ in 
a graph $G=(V,E)$ is a subset of the edge set $E$ such that no two distinct edges in $M$ share a common vertex). Moreover, since each vertex $i \in V(G_n)$ represents the difference $\pm i$, a tight equi-difference code $\mathcal{C} \in \mathrm{CAC}^{e}(n)$ exists if and only if there is a matching that covers all vertices of $G_n$. Given that $G_n$ is a disjoint union of cycles, such a matching is possible if and only if every cycle has even length. As a consequence, we obtain the following proposition.
\begin{prop}\cite[Prop.\,1]{FLS} \label{tight equi-diff prop}
Given an odd integer $n \geq 3$, a tight equi-difference code $\mathcal{C} \in \mathrm{CAC}^{e}(n)$ exists if and only if $3\nmid n$ and  $\mc{O}(n) = 0$, or $3\mid n$ and $\mc{O}(n) = 1$.
\end{prop}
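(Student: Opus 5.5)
We prove the proposition by translating tightness into a statement about perfect matchings of $G_n$ and then using that $G_n$ is a disjoint union of cycles. First we fix the meaning of \emph{tight} here: since the statement concerns $\mathrm{CAC}^{e}(n)$, we take it to mean an equi-difference code $\mathcal{C}$ with $\bigcup_{x\in\mathcal{C}}\Delta(x)=\Z_n\setminus\{0\}$. Such a code is automatically optimal in $\mathrm{CAC}^e(n)$, because the difference sets are disjoint and each has size at least $2$.

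\textbf{Dictionary between codewords and subgraphs.} The codeword $x_a=\{0,a,2a\}$ has difference set $\Delta(x_a)=\{\pm a,\pm 2a\}$. If $3\nmid n$, then $2a\not\equiv\pm a$, so $x_a$ covers exactly the two vertices $a$ and $b$, where $b\equiv\pm 2a$. These vertices form an edge of $G_n$. If $3\mid n$ and $a=n/3$, then $2a\equiv -a$, and $x_a$ covers only the vertex $n/3$, which is the loop. Disjointness of difference sets is then exactly the condition that the corresponding edges and loop are pairwise vertex-disjoint. Tightness means these pieces cover all of $V$.

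\textbf{Existence of a tight code.} A tight $\mathcal{C}\in\mathrm{CAC}^e(n)$ therefore exists if and only if $V$ can be partitioned into the loop, when it is present, together with a set of non-loop edges of $G_n$. The loop is available only when $3\mid n$. So this is the condition that the graph $G_n$, with the loop vertex $n/3$ deleted when $3\mid n$, has a perfect matching.

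\textbf{Reduction to the cycle structure.} Each connected component of $G_n$ is a cycle, since the map $a\mapsto \pm 2a$ is a permutation of $V$. A cycle has a perfect matching if and only if it has even length; a $2$-cycle counts as a single edge. A cycle of length at least $2$ cannot contain the loop vertex $n/3$, because that vertex is fixed by the map. When $3\mid n$ the loop is one odd cycle of length $1$, and it is covered by the codeword $\{0,n/3,2n/3\}$. The condition therefore becomes: all cycles other than the loop have even length. This says $\mathcal{O}(n)=0$ when $3\nmid n$, and $\mathcal{O}(n)=1$ when $3\mid n$, since then the loop itself contributes exactly one odd cycle.

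\textbf{Main obstacle.} The only delicate point is the dictionary in the first step. We must check that for $3\nmid n$ no codeword $x_a$ degenerates: $2a\not\equiv\pm a$, and $a\not\equiv -a$ because $n$ is odd. We must also check that when $3\mid n$ the loop occurs only at $a=n/3$, since $a\equiv\pm 2a$ forces $3a\equiv 0$ or $a\equiv 0$. Together these make the correspondence between codes and matchings exact, in both directions.
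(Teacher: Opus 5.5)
Your proposal is correct and follows essentially the same route as the paper: tight equi-difference codes correspond to perfect matchings of $G_n$ once the loop vertex $n/3$ is assigned to the codeword $\{0,n/3,2n/3\}$, and since $G_n$ is a disjoint union of cycles this happens exactly when all non-loop cycles are even. Your explicit handling of the loop and of the $2$-cycle is more careful than the paper's sketch; the only soft spot is your one-line optimality claim, since disjointness plus $|\Delta(x)|\ge 2$ alone does not give it, but it does follow because every equi-difference codeword has $|\Delta(x)|=4$ except the single loop codeword with $|\Delta(x)|=2$, which any covering code must contain when $3\mid n$.
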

The reader might wonder why the
$3$ divisibility of $n$ plays a 
role. This is due to the existence of the special equi-difference codeword $(0,n/3,2n/3)$ which of course
only occurs if $3\mid n$.

Making use of some existing results it is not difficult to quantify Proposition 
\ref{tight equi-diff prop} (for a proof see Section \ref{tightequicodeproof}).  The big oh notation for the error terms is explained in any book on analytic number theory, for example in that of Apostol \cite[p.\,53]{Apostol}. The notation $\mc{O}$ is in this paper solely used for the quantity defined in Definition \ref{defn of On}.
\begin{thm}\label{tight equi code thm}
Let $P(x)$ and $N(x)$ denote the number of primes, respectively odd integers $1\le n\le x$, for which a tight equi-difference code of length $n$ exists.
We have
\[
P(x)=\frac{5}{12}\frac{x}{\log x}+O\Bigl(\frac{x\log\log x}{(\log x)^{7/6}}\Bigr)\quad\text{~and~}\quad
N(x)=C\frac{x}{(\log x)^{7/12}}+O\Bigl(\frac{x(\log\log x)^5}{(\log x)^{19/12}}\Bigr),
\]
with $C$ a positive constant. 
More precisely, if $C_{0}$ denotes the constant appearing in Theorem \ref{density O(m) is zero thm}, then $C=\frac{4}{3}C_{0}\approx 0.4298\dotsc$
\end{thm}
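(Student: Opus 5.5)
The plan is to turn the condition in Proposition~\ref{tight equi-diff prop} into an arithmetic condition on multiplicative orders, and then count using results from the study of Artin's conjecture.

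\emph{Step 1: a formula for $\mc{O}(n)$.} The vertices of $G_n$ are the residues modulo $n$ taken up to sign. I would group them according to $d=n/\gcd(a,n)$, where $d$ runs over the divisors $d>1$ of $n$. For a fixed $d$ this gives $\varphi(d)/2$ classes, on which $\langle \pm 2\rangle$ acts freely. Every cycle at level $d$ then has the same length $L_d$, namely the order of $2$ in $(\Z/d\Z)^*/\{\pm1\}$. This yields
\[
\mc{O}(n)=\sum_{\substack{d\mid n,\ d>1\\ L_d \text{ odd}}}\frac{\varphi(d)}{2L_d}.
\]
Note that $L_d=\ord_d(2)$ if $-1\notin\langle 2\rangle$, and $L_d=\ord_d(2)/2$ otherwise. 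I would relate the parity of $L_d$ to the parity of $\ord_d(4)$. When $d=p$ is prime the relation is exact: $-1\in\langle 2\rangle \bmod p$ if and only if $\ord_p(2)$ is even, since $-1$ is the unique element of order $2$ in a cyclic group. Hence $L_p$ is odd if and only if $\ord_p(4)$ is odd. For $d=3$ we have $L_3=1$, which gives the loop.

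\emph{Step 2: the prime case.} For $p>3$, Proposition~\ref{tight equi-diff prop} says that a tight code exists if and only if $\ord_p(4)$ is even. The primes with $\ord_p(2)$ odd have density $7/24$ (Hasse). By the same Kummer-theoretic computation in the fields $\Q(\zeta_{2^j},2^{1/2^i})$, the primes with $\ord_p(2)\equiv 2 \pmod 4$ also have density $7/24$. Together these show that $\ord_p(4)$ is odd for a set of primes of density $7/12$, and so the tight primes have density $5/12$. For the error term I would invoke the known effective version: an effective Chebotarev theorem applied to these finitely many $2$-power Kummer extensions, with the tail in $j$ truncated at about $\log\log x$. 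This is the source of the $x\log\log x/(\log x)^{7/6}$ term, where $7/6$ comes from the largest relevant degree. The prime $p=3$ is tight, since $\mc{O}(3)=1$, and it contributes only $O(1)$.

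\emph{Step 3: odd integers.} Here I would use Theorem~\ref{density O(m) is zero thm}, which counts odd $m\le x$ with $\mc{O}(m)=0$ as $C_0x/(\log x)^{7/12}$ with the stated error term. It remains to treat odd $n$ with $3\mid n$ and $\mc{O}(n)=1$. Since $d=3$ always contributes exactly one odd cycle (the loop), this condition says that every other divisor $d\mid n$, $d>1$, $d\ne 3$, contributes no odd cycle. I expect to show the following characterization. Write $n=3^k m$ with $3\nmid m$. Divisors $d=9,27,\dots$ are harmless or fatal according to a fixed rule: one computes $\ord_{9}(2)=6$ and $-1\equiv 2^3 \pmod 9$, so $L_9=3$ is odd. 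Hence $k=1$ is forced. Divisors $3d'$ with $d'\mid m$ then satisfy $L_{3d'}$ odd only if $L_{d'}$ is odd. So the condition should reduce to $n=3m$, $3\nmid m$, $\mc{O}(m)=0$.

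Summing, $N(x)$ is the sum of two counts. The first is the number of odd $m\le x$ with $\mc{O}(m)=0$. The second is the number of $m\le x/3$ with $3\nmid m$ and $\mc{O}(m)=0$. I would show that the condition $3\nmid m$ costs nothing here: if $3\mid m$ then $\mc{O}(m)\ge1$, so every $m$ counted by Theorem~\ref{density O(m) is zero thm} already satisfies $3\nmid m$. Since $(\log(x/3))^{-7/12}=(\log x)^{-7/12}(1+O(1/\log x))$, this gives $C=C_0+C_0/3=\tfrac43C_0$, with the error inherited from Theorem~\ref{density O(m) is zero thm}.

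\emph{Main obstacle.} The step I expect to be hardest is making the reduction in Step 3 rigorous for composite $d$, that is, controlling the parity of $L_{3d'}$ in terms of $L_{d'}$ via the Chinese remainder theorem. I would try first to show that $L_{\lcm(a,b)}$ is odd if and only if both $L_a$ and $L_b$ are odd and the corresponding signs are compatible. Verifying this compatibility with $d'=3$ requires care.
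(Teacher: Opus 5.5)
Your proposal is correct and follows essentially the paper's route. Primes reduce to counting $p$ with $2\mid\ord_p(4)$, and for the error term you can simply quote Moree's unconditional $\frac{5}{12}\Li(x)+O\bigl(x(\log\log x)^4/(\log x)^3\bigr)$, absorbing $\Li(x)-x/\log x=O(x/\log^2 x)$, rather than rebuilding the Chebotarev truncation; odd integers reduce to $N(x)=N_0(x)+N_0(x/3)+O(1)$ combined with Theorem~\ref{density O(m) is zero thm}. The step you flag as the main obstacle is immediate: reducing $2^{L_{3d'}}\equiv\pm1\pmod{3d'}$ modulo $d'$ gives $L_{d'}\mid L_{3d'}$, which together with $L_9=3$ yields both inclusions of $\{n:3\mid n,\ \mc{O}(n)=1\}=\{3m: m\text{ odd},\ \mc{O}(m)=0\}$ without any sign-compatibility analysis (the paper gets the same characterization via Lemma~\ref{lem-even}).
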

More generally, Proposition \ref{tight equi-diff prop} makes clear the importance of understanding which values are assumed by $\mc{O}(n)$ and for how many odd 
integers  $1\le n\le x$ they get assumed as $x$ gets large. This is a delicate question that we intend to discuss in greater depth in a future paper.
Here we present some alternative interpretations of 
$\mc{O}(n)$ and study $\mc{O}(p)$ in detail as $p$ runs through the primes,
something that also has a coding theoretic relevance, as becomes particularly
clear in Section \ref{prime-values sec}. The techniques used will come mainly from the analytic and algebraic number theory used to deal with variations of Artin's primitive root conjecture. A good inkling of the mathematics involved can be obtained from the survey \cite{Msurvey}. Many results in this area, also in this paper, will be under the \emph{Generalized Riemann Hypothesis} (GRH), by which we mean that each Dedekind zeta-function has only non-trivial zeros on the half-line $\Re(s)=1/2$. Indeed, typically this assumption is only required for the Dedekind zeta-functions of the number fields involved in the proof of the result.
\subsection{Description of the  main results}
Before turning to the distributional questions, we first develop several complementary descriptions of the quantity $\mc{O}(n)$. We recall the formula of Fu, Lo and Shum~\cite{FLS} in terms of $\ord_d(2)$ and derive an equivalent formulation using $\ord_d(4)$. In particular, for an odd prime $p$,  this gives a particularly simple description of $\mc{O}(p)$ in terms of the index of $4$ modulo $p$. 
This description also admits a group-theoretic interpretation. Introducing the graph $H_p$, whose vertices are the non-zero squares modulo $p$ and whose edges are induced by multiplication by $4$, we show that $H_p$ is isomorphic to $G_p$. Consequently, whenever $\mc{O}(p)>0$,
\[
\mc{O}(p)=[(\F_p^\times)^2:\langle 4\rangle]=[\F_p^\times:\langle -1,2\rangle].
\]
This also gives a short reproof of a result of Fu, Lo and Shum~\cite{FLS}.

A different interpretation of $\mc{O}(n)$ is obtained through the factorization of $x^n-1$ over $\F_2$. More precisely, $\mc{O}(n)$ can be recovered by counting certain pairs of distinct reciprocal irreducible factors of odd degree together with self-reciprocal irreducible factors whose degrees are congruent to $2$ modulo $4$. This interpretation recovers the preceding order formulas and also leads to the characterization of Wu and Fu~\cite{WuFu} in case the length of the code is a number of the form $2^e\pm1$.

Finally, by viewing multiplication by $a$ as an automorphism of the additive group $\mathbb Z_m$, we use the Cauchy--Frobenius identity to obtain an explicit formula for the number of odd-length orbits of a general multiplication map. Specializing this description to multiplication by $2$ yields another formula for $\mc{O}(n)$.

In the remaining part of the section we describe our analytic number theory results.
\subsubsection{Results on the distribution of $\mc{O}(p)$ for prime $p$}
Let $k \ge 0$ be an integer.
Put $$P_k\coloneq \{p>2:{\mc{O}}(p)=k\}$$ and let $P_k(x)$ denote the number of primes $p\leq x$ in $P_k$. The counting function $P_0(x)$ satisfies, by Proposition \ref{tight equi-diff prop},  the same asymptotic as that given for $P(x)$ in Theorem \ref{tight equi code thm}.
\begin{thm}[under GRH]
\label{main}
Let $k \ge 1$ be an integer.
The set of primes $P_k$ has a natural density $\delta_k$ given by
$$
\delta_k =
A\cdot E(k) \cdot
\begin{cases}
1 & \ \textrm{if}\ k{\text~is~odd};\\
0 & \ \textrm{if}\ k\equiv 2\pmod{4};\\
2/3 & \ \textrm{if}\ 4\mid k,
\end{cases}
$$
with
\begin{equation*}
\label{Artinconstantdef}
A\coloneq \prod_{\textrm{prime $q$}}\Big (1-\frac{1}{q(q-1)} \Big) = 0.3739558136192022880547280543464\dotsc,
\end{equation*} 
the Artin constant and
\begin{equation}
\label{def:Ek}    
E(k)\coloneq \frac{1}{k^2}\prod_{q\mid k}\frac{q^2-1}{q^2-q-1}.
\end{equation}
Moreover, asymptotically in $x$,
\[  P_k(x) = \delta_k \frac{x}{\log x} + O_{k}\Big(\frac{x\log\log x}{\log^2x} \Big). \]
In case $k\equiv 2\pmod{4}$, we have $P_k(x)=0$.
\end{thm}

The techniques used to deal with $P_k$ can also be used to study the level sets of $\mc{O}(p)$ within a prescribed arithmetic progression. For coprime positive integers $a$ and $d$, put
$$
P_k(a,d)(x)\coloneq\#\{p\leq x:p\equiv a\pmod d,\ \mc O(p)=k\}.
$$
For $k\geq1$, Theorem~\ref{thmgeneral} gives, under GRH, an asymptotic formula for $P_k(a,d)(x)$. The cases $k=0$ and $k=1$ admit particularly explicit descriptions. 
For $k=0$ the result is unconditional and the corresponding rational density is described explicitly in Theorem~\ref{dens zero prop}.
For $k=1$, we obtain the following more explicit result.
\begin{thm}\label{thm:p1ad}
Let $a$ and $d$ be coprime positive integers. 
Under  GRH, we have
\begin{equation}\label{eq:underGRH}
P_1(a,d)(x)= 
\frac{\rho(d,a)}{\varphi(d)}\frac{x}{\log x}
+
O_d\Bigl(\frac{x\log\log x}{(\log x)^{2}}\Bigr),
\end{equation}
with
$$
\rho(d,a)=c(d,a)R(d,a)A,
$$
where
\[
R(d,a)\coloneq 2\prod_{p\mid (a-1,d)}\Big(1-\frac{1}{p}\Big)
\prod_{p\mid d}\Big(1+\frac{1}{p^2-p-1}\Big),
\]
and
$$
c(d,a)\coloneq 
\begin{cases}1/2  & \textrm{if~}4\nmid d;\\
1  & \textrm{if~}4\mid d,~a\equiv 3 \pmod{4};\\
0  & \textrm{if~}4\mid d,~a\equiv 1 \pmod{4}.
\end{cases}
$$
\end{thm}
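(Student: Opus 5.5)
The plan is to reduce the condition $\mc O(p)=1$ to a primitive-root-type condition, and then run Hooley's GRH argument in an arithmetic progression.

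\textbf{Reduction.} For an odd prime $p$, all cycles of $G_p$ have the same length. By the isomorphism $G_p\cong H_p$ mentioned above, this common length is the order of the image of $2$ in $\F_p^\times/\{\pm1\}$, namely $|\langle -1,2\rangle|/2$. The number of cycles is $[\F_p^\times:\langle -1,2\rangle]$. Hence $\mc O(p)=1$ holds exactly when $\langle -1,2\rangle=\F_p^\times$ and $(p-1)/2$ is odd, i.e.\ $p\equiv 3\pmod 4$. For such $p$ the group $(\F_p^\times)^2$ is cyclic of odd order $(p-1)/2$, and $\F_p^\times=\{\pm1\}\times(\F_p^\times)^2$. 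So $\langle -1,2\rangle=\F_p^\times$ if and only if $\pm 2$ (equivalently $4$) generates $(\F_p^\times)^2$. Because this group has odd order, that happens if and only if no odd prime $q$ satisfies both $q\mid p-1$ and ``$2$ is a $q$-th power modulo $p$''. Thus
\[
P_1(a,d)(x)=\#\{p\le x:\ p\equiv a \ (\mathrm{mod}\ d),\ p\equiv 3\ (\mathrm{mod}\ 4),\ p \text{ splits completely in no } K_q,\ q \text{ odd prime}\},
\]
where $K_m=\Q(\zeta_m,2^{1/m})$. Only odd $q$ occur, and this is what produces the factor $2$ in $R(d,a)$: the $q=2$ Euler factor of $A$ is missing.

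\textbf{Density.} Put $D=\lcm(4,d)$. For odd squarefree $m$ we have $[K_m:\Q]=m\varphi(m)$. Since $\sqrt2\notin K_m$ for odd $m$, one gets $K_m\cap\Q(\zeta_D)=\Q(\zeta_{(m,D)})$. Chebotarev in $K_m(\zeta_D)$ then shows that the density of primes with $p\equiv b\pmod D$ splitting completely in $K_m$ is
\[
\frac{1}{\varphi(D)}\cdot\frac{\mathbf 1_{\,b\equiv 1 \ (\mathrm{mod}\ (m,D))}\,\varphi((m,D))}{m\varphi(m)} .
\]
Sum this against $\mu(m)$ over odd squarefree $m$, and over the residues $b\bmod D$ with $b\equiv a\pmod d$ and $b\equiv 3\pmod 4$.
\begin{itemize}
\item If $4\mid d$, there is one such $b$ or none, according as $a\equiv 3$ or $a\equiv 1\pmod 4$.
\item If $4\nmid d$, the compatible residues make up a proportion $1/2$ of the classes.
\end{itemize}
This gives $c(d,a)/\varphi(d)$. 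The sum factorizes as an Euler product over odd $q$, with local factors
\begin{itemize}
\item $1-\frac{1}{q(q-1)}$ if $q\nmid d$;
\item $1-\frac1q$ if $q\mid d$ and $q\mid a-1$;
\item $1$ if $q\mid d$ and $q\nmid a-1$.
\end{itemize}
Dividing by the Artin factors yields $(1-\frac1q)(1+\frac1{q^2-q-1})$, respectively $1+\frac1{q^2-q-1}$. When $2\mid d$, the prime $2$ contributes $\frac12\cdot 2=1$ to $R(d,a)$. This matches $\rho(d,a)=c(d,a)R(d,a)A$.

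\textbf{Error term (main obstacle).} The asymptotic with error $O_d(x\log\log x/\log^2 x)$ follows Hooley's method. Split the odd $q$ into three ranges.
\begin{itemize}
\item For $q\le \tfrac16\log x$, handle squarefree products of such $q$ by the GRH-effective Chebotarev theorem in $K_m(\zeta_D)$.
\item For $\tfrac16\log x<q\le \sqrt x\log^{-2}x$, bound the primes splitting in $K_q$ by Chebotarev under GRH or Brun--Titchmarsh.
\item For $\sqrt x\log^{-2}x<q\le x$, use that such $p$ divide $2^{(p-1)/q}-1$ with $(p-1)/q$ small, and count prime divisors of $2^j-1$.
\end{itemize}
This is standard, but keeping uniformity while the extra congruence conditions modulo $D$ are imposed is the step requiring care. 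I expect the bookkeeping of the intersections $K_m\cap\Q(\zeta_D)$ in the first range to be the main technical point.
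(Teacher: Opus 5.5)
Your reduction and density computation are correct, but they take a different route from the paper. The paper computes nothing from scratch. It uses $\mc O(p)=1\iff p\equiv 3\pmod 4$ and $\ord_p(4)=(p-1)/2$, then lifts $a\bmod d$ to the unique $b\bmod D$ with $b\equiv 3\pmod 4$. It then quotes Proposition~1.12 of Hu et al., whose correction factor equals $1$ because $4\mid D$ and $b\equiv 3\pmod 4$. That citation supplies both the main term $A\,R(D,b)/\varphi(D)$ and the error term, and the paper only checks $R(D,b)=R(d,a)$ and $\varphi(d)/\varphi(D)\in\{1/2,1\}$.

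You instead rewrite the condition as ``$p\equiv 3\pmod 4$ and $p$ splits completely in no $K_q$ with $q$ odd''. You then compute the Chebotarev densities in $K_m(\zeta_D)$ explicitly and factor the M\"obius sum into an Euler product. What you actually need here is that for odd $m$ the maximal abelian subfield of $K_m$ is $\Q(\zeta_m)$; the remark $\sqrt2\notin K_m$ alone is not quite enough. Your route explains the shape of $\rho(d,a)$ transparently:
\begin{itemize}
\item the factor $2$ in $R(d,a)$ is the missing Euler factor at $q=2$;
\item the factors $1+1/(q^2-q-1)$ are exactly the inverses of the Artin factors at odd $q\mid d$;
\item the ``correction factor $1$'' becomes visible as the absence of entanglement for odd $m$.
\end{itemize}
The price is that you must redo Hooley's error-term argument yourself, which the paper gets from the citation.

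On that point your sketch has one concrete error: the third range starts too early. For $\sqrt{x}\log^{-2}x<q\le\sqrt{x}\log x$ the exponent $j=(p-1)/q$ can be as large as $\sqrt{x}\log^{2}x$. Counting prime divisors of $\prod_{j\le y}(2^j-1)$ only gives $O(y^2)$, and for $y=\sqrt{x}\log^2x$ this is $x\log^4x$, which is useless. The standard fix is as follows.
\begin{itemize}
\item Treat $\sqrt{x}\log^{-2}x<q\le\sqrt{x}\log x$ by Brun--Titchmarsh. This gives $\ll\frac{x}{\log x}\sum_q\frac1q\ll\frac{x\log\log x}{\log^2x}$, which is exactly where the $\log\log x$ in the error term comes from.
\item Use the $2^j-1$ argument only for $q>\sqrt{x}\log x$. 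There $y=\sqrt{x}/\log x$, giving $O(x/\log^2x)$.
\item Do not use Brun--Titchmarsh on $\tfrac16\log x<q\le\sqrt{x}\log^{-2}x$: it would only give $O(x\log\log x/\log x)$. You genuinely need GRH--Chebotarev for each $K_q$ there.
\end{itemize}
With these ranges the extra congruence modulo $D$ only adds the fixed field $\Q(\zeta_D)$, so the uniformity is routine. Alternatively, you can simply cite the result of Hu et al.\ for the error term, as the paper does.
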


\medskip
Our distribution result for $\mc{O}(p)$ also has a direct application to the determination of the optimal size of conflict-avoiding codes of prime length. If $\mc{O}(p)>0$, then
\begin{equation}
\label{M(p) ineq} 
M^{e}(p)
\leq M(p)
\leq M^{e}(p)+\Bigl\lfloor\frac{\mc{O}(p)}{3}\Bigr\rfloor,
\end{equation}
as will be explained in Section~\ref{Alter gp for O(p) sec}.
A result of Hsia, Li and Sun~\cite{HLScyclo} shows that the upper bound is attained when the index $\mc{O}(p)$ is an odd prime. Combining their result with Theorem~\ref{main} gives the following consequence.
\begin{thm}\label{CAC-prime-values thm}
Let $p$ be an odd prime such that $\mc{O}(p)$ is an odd prime. Then the upper bound in \eqref{M(p) ineq} is attained, and hence
\[
M(p)=M^{e}(p)+\Bigl\lfloor\frac{\mc{O}(p)}{3}\Bigr\rfloor=\frac{p-1}{4}
-\frac{\mc{O}(p)}{2}+\Bigl\lfloor\frac{\mc{O}(p)}{3}\Bigr\rfloor.
\]
Moreover, assuming GRH, the set of such primes has relative density $0.508822711238\dotsc$ within the set of odd primes $p$ satisfying $\mc{O}(p)>2$.
\end{thm}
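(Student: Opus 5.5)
The plan has two independent parts: the exact formula for $M(p)$, which is essentially combinatorial, and the density statement, which follows from Theorem~\ref{main} once a uniformity issue is handled.

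\emph{The formula for $M(p)$.} I would first compute $M^{e}(p)$ from the graph $G_p$. Since $p$ is prime and $\mc{O}(p)$ is an odd prime, we have $p\neq 3$, so $G_p$ has no loop. It is then a disjoint union of cycles on $(p-1)/2$ vertices. A maximum matching in a cycle of length $L$ has $\lfloor L/2\rfloor$ edges, so it misses exactly one vertex precisely when $L$ is odd. Summing over cycles gives
\[
M^{e}(p)=\frac{1}{2}\Bigl(\frac{p-1}{2}-\mc{O}(p)\Bigr)=\frac{p-1}{4}-\frac{\mc{O}(p)}{2}.
\]
Next I would invoke the result of Hsia, Li and Sun~\cite{HLScyclo}, which says that the upper bound in \eqref{M(p) ineq} is attained when the index $[\F_p^\times:\langle -1,2\rangle]$ is an odd prime. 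This index coincides with $\mc{O}(p)$ by the group-theoretic interpretation announced in the introduction (valid since $\mc{O}(p)>0$). Combining these gives the displayed formula.

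\emph{The density statement.} Let $\mathcal{Q}$ denote the set of odd primes. The target set is the disjoint union $\bigcup_{q\in\mathcal{Q}}P_q$, and the reference set is $\{p:\mc{O}(p)>2\}$.

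For the reference set, observe that it is the complement of $P_0\cup P_1\cup P_2$. By Theorem~\ref{tight equi code thm} and Proposition~\ref{tight equi-diff prop}, $P_0$ has density $5/12$. By Theorem~\ref{main}, $P_1$ has density $\delta_1=A$ and $P_2$ is empty. Hence the reference set has density $1-5/12-A=7/12-A$, and this requires no summation over $k$.

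For the numerator, Theorem~\ref{main} gives $\delta_q=A\,E(q)=A\,\frac{q+1}{q(q^2-q-1)}$ for each odd prime $q$. The claimed density is then
\[
\frac{A\sum_{q\in\mathcal{Q}}\frac{q+1}{q(q^2-q-1)}}{7/12-A},
\]
which I would evaluate numerically to obtain $0.5088\dots$. The series converges like $\sum q^{-2}$, so this is routine with a tail estimate.

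\emph{Main obstacle.} The real difficulty is justifying that the density of an infinite union equals the sum of the individual densities. The error term in Theorem~\ref{main} depends on $k$, so I need a uniform tail bound:
\[
\#\{p\le x:\ \mc{O}(p)\ \text{divisible by some prime } q>y\}=o_y(\pi(x))\quad\text{as } y\to\infty .
\]
I would establish this with a Hooley-style splitting, using that $q\mid[\F_p^\times:\langle-1,2\rangle]$ forces $p$ to split completely in $\Q(\zeta_q,2^{1/q})$ up to a bounded factor.
\begin{itemize}
\item For $y<q\le \sqrt{x}/\log^2 x$, I would apply the GRH effective Chebotarev theorem, giving $\ll \pi(x)/q^2+\sqrt{x}\,q\log x$ per prime. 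Summed over this range, this contributes $\ll\pi(x)/y+o(\pi(x))$.
\item For $\sqrt{x}/\log^2x<q\le\sqrt{x}\log x$, I would use Brun--Titchmarsh for $p\equiv1\pmod q$.
\item For $q>\sqrt{x}\log x$, the order of $2$ modulo $p$ is at most $x/q<\sqrt{x}/\log x$. So $p$ divides $\prod_{m\le \sqrt{x}/\log x}(2^m-1)$, which has too few prime factors to matter.
\end{itemize}
This is exactly Hooley's argument. Once the tail bound holds, letting $x\to\infty$ and then $y\to\infty$ gives the density of the union as $\sum_q\delta_q$, which completes the proof.
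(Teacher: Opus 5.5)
Your overall plan is sound, but two steps are wrong as written.

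\emph{The value of $\delta_q$.} Since $E(q)=\frac{1}{q^2}\cdot\frac{q^2-1}{q^2-q-1}$, you get $\delta_q=A\frac{q^2-1}{q^2(q^2-q-1)}$. Your $A\frac{q+1}{q(q^2-q-1)}$ is too large by the factor $q/(q-1)$; already at $q=3$ you get $4/15$ instead of $8/45$. Evaluating your series would give a ratio of about $0.70$, not $0.5088\dots$. With the correct terms the numerator is $AC$ with $C=0.28488\dots$, and the ratio is $12AC/(7-12A)$, as in Proposition~\ref{propR}.

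\emph{The Chebotarev error term.} In the range $y<q\le\sqrt{x}/\log^2x$, your per-prime error $\sqrt{x}\,q\log x$ is too weak. Summed over that range it is of order $x^{3/2}/(\log x)^4$, so it does not give $o(\pi(x))$. Hooley's argument uses the GRH estimate $\frac{\Li(x)}{q(q-1)}+O(\sqrt{x}\log(qx))$ for the number of $p\le x$ splitting completely in $\Q(\zeta_q,2^{1/q})$. Its error terms sum to $O(x/\log^2x)$ over $q\le\sqrt{x}/\log^2x$. With that bound, your three ranges do give upper density $\ll 1/y$ for the tail.

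\emph{The Hsia--Li--Sun hypothesis.} Their theorem also assumes $4\nmid\ord_p(2)$. This holds here because $\ord_p(4)$ is odd when $\mc{O}(p)>0$, but you should say so.

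\emph{Comparison with the paper.} Once repaired, your route differs from the paper's mainly in how the infinite union is handled. The paper takes the numerator to be $\sum_q\delta_q=AC$ and the denominator to be $\frac{7}{12}-A$ (exactly as you do for the latter), with no tail estimate. The additivity you single out as the main obstacle needs no further analysis. It follows from the Euler-product computation in the remark after Proposition~\ref{propR}, which shows $\sum_{k\ge1}\delta_k=\frac{7}{12}$. By Proposition~\ref{512}, this is the density of $\{p>2:\ord_p(4)\text{ odd}\}=\bigsqcup_{k\ge1}P_k$.

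Now take any set $J$ of indices. Finite additivity gives lower density at least $\sum_{k\in J}\delta_k$ for $\bigcup_{k\in J}P_k$, and at least $\sum_{k\notin J}\delta_k$ for the complementary union. These two unions partition a set of density $\frac{7}{12}=\sum_k\delta_k$, so the upper density of $\bigcup_{k\in J}P_k$ is at most $\sum_{k\in J}\delta_k$. Taking $J$ to be the odd primes gives the numerator.

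Your Hooley-type argument is therefore a valid but heavier alternative. What it buys is an explicit bound for the tail that does not depend on the exact values of the $\delta_k$.
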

\subsubsection{Result on the distribution of $\mc{O}(n)$ for integer $n$}
Given any integer $e\geq0$, one can wonder if there are $n$ with  $\mc{O}(n)=e$ and if they exist, how frequent they are. The following result gives a partial answer (for a proof see Section \ref{Onintegerdistribution}).

\begin{thm}\label{density O(m) is zero thm}
For $x\geq 3$ and $e\ge 0$, let 
\[
N_{e}(x) \coloneq \#\{1 < m \leq x:m \ \textrm{odd and} \ \mc{O}(m)=e\}.
\]
We have
\[
N_{0}(x)=C_{0}\frac{x}{(\log x)^{7/12}}
+O\Bigl(
\frac{x(\log\log x)^5}{(\log x)^{19/12}}
\Bigr),
\]
with $$C_0=\frac{1}{\Gamma(\frac{5}{12})}\lim_{t\rightarrow \infty}\prod_{p\le t}
g(p)\Bigl(1-\frac{1}{p}\Bigr)^{\frac{5}{12}}=0.32235\dotsc,$$
and $g(p)=(1-1/p)^{-1}$ if $2\mid \ord_p(4)$ and $g(p)=1$ otherwise.
For every fixed $e\ge 1$ and for sufficiently large $x$, we have  
$$N_e(x)
\gg_{e} \frac{x}{(\log x)^{7/12}}.$$
\end{thm}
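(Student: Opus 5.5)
The plan is to reduce the condition $\mc{O}(m)=e$ to a multiplicative condition on the prime factors of $m$. Then the count $N_0(x)$ becomes a Selberg--Delange/Wirsing-type problem, and for $e\ge1$ a subset of the relevant $m$ can be produced by multiplying by a fixed factor.

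\emph{Step 1 (orbit decomposition).} Split the nonzero residues modulo $m$ according to their exact order $d\mid m$, $d>1$. Up to sign, the residues of exact order $d$ form $(\Z/d\Z)^\times/\{\pm1\}$, which has $\varphi(d)/2$ elements. The cycles of $G_m$ inside this class are the cosets of $\langle 2\rangle$ in $(\Z/d\Z)^\times/\{\pm1\}$. Using the $\ord_d(4)$ reformulation of the Fu--Lo--Shum formula announced above, I expect
\[
\mc{O}(m)=\sum_{\substack{d\mid m,\ d>1\\ 2\nmid \ord_d(4)}} \frac{\varphi(d)}{2\ord_d(4)} .
\]
The point to check is that the orbit of $2$ in $(\Z/d\Z)^\times/\{\pm1\}$ has odd length exactly when $\ord_d(4)$ is odd. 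In that case the cycle length is $\ord_d(4)$, since either $-1\notin\langle2\rangle$ and $\ord_d(2)=\ord_d(4)$, or $-1=2^{\ord_d(4)}$. From this formula, each $d$ contributes either $0$ or a positive amount, and the contribution depends only on $d$.

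\emph{Step 2 ($e=0$).} Let $S=\{p>2:\ 2\mid\ord_p(4)\}$. If $p\mid d$ then $\ord_p(4)\mid\ord_d(4)$. Hence $\mc{O}(m)=0$ if and only if every prime divisor of $m$ lies in $S$: the forward direction follows by taking $d=p$, and the converse because then every $\ord_d(4)$ is even. So $N_0(x)$ counts the integers $1<m\le x$ all of whose prime factors lie in $S$; note $3\notin S$, and $2$ is excluded automatically.

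Since $2\mid \ord_p(4)$ is equivalent to $4\mid\ord_p(2)$, the density of $S$ is the Hasse-type density $5/12$. Quantitatively, the proof of Theorem~\ref{tight equi code thm} (through $P_0(x)=P(x)$ and Proposition~\ref{tight equi-diff prop}) gives
\[
\pi_S(x)=\tfrac{5}{12}\,\frac{x}{\log x}+O\bigl(x\log\log x/(\log x)^{7/6}\bigr).
\]
I then apply a quantitative Wirsing/Selberg--Delange theorem (for example Tenenbaum's version for multiplicative indicator functions whose prime support has density $\kappa=5/12$ with such an error term) to the indicator function of $S$-integers. This gives main term $\frac{1}{\Gamma(5/12)}\,\frac{x}{\log x}\prod_{p\le x}g(p)$, and Mertens' theorem converts it into $C_0\,x/(\log x)^{7/12}$. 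The term $m=1$ is removed with no effect.

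The main obstacle is this step. The error term in $\pi_S$ only saves a power $(\log x)^{1/6}$, not an arbitrary power, so the standard Selberg--Delange contour argument does not apply verbatim. I would therefore run a Wirsing-type convolution/iteration argument, or use Hal\'asz-type tools, tracking losses of $\log\log x$. The losses should accumulate to the stated $(\log\log x)^5/\log x$ relative error, and the claim that they stop at the fifth power is the part I am least sure of.

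\emph{Step 3 ($e\ge1$).} Consider $m=3^e n'$ with $n'$ an $S$-integer, so that $3\nmid n'$. Any $d\mid m$ with $\ord_d(4)$ odd has no prime factor in $S$ (again because $\ord_p(4)\mid\ord_d(4)$), so $d\mid 3^e$. Hence $\mc{O}(m)=\mc{O}(3^e)$. For $d=3^j$ we have $\ord_{3^j}(4)=3^{j-1}$, which is odd, and $\varphi(3^j)/(2\cdot 3^{j-1})=1$. Therefore $\mc{O}(3^e)=e$. It follows that $N_e(x)\ge N_0(x/3^e)\gg_e x/(\log x)^{7/12}$ by Step 2.
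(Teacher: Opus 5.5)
There is a genuine gap in the analytic half of Step~2. The prime count you feed into the Wirsing/Selberg--Delange machinery is $\pi_S(x)=\frac{5}{12}\frac{x}{\log x}+O\bigl(x\log\log x/(\log x)^{7/6}\bigr)$, read off from the statement of Theorem~\ref{tight equi code thm}. Its relative error is only about $(\log x)^{-1/6}$. This is merely an upper bound, so it is compatible with prime sets whose $S$-integer count deviates from the main term by a relative amount of roughly that order (think of a secondary singularity $(s-1)^{-5/12+1/6}$ in the generating Dirichlet series). Hence no general Tauberian argument (Wirsing, Hal\'asz, or a real-variable Selberg--Delange variant) can turn this input into the relative error $(\log\log x)^5/\log x$ claimed in the theorem. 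The issue is not whether the $\log\log$ losses stop at the fifth power; a whole power of $\log x$ is missing from your input. What you need is the much sharper, unconditional estimate
\[
\#\{p\le x:\ p>2,\ 2\mid\ord_p(4)\}=\tfrac{5}{12}\Li(x)+O\Bigl(\frac{x(\log\log x)^4}{(\log x)^{3}}\Bigr)
\]
of Proposition~\ref{512}, which comes from known results on primes with $4\mid\ord_p(2)$. With this input the paper applies a ready-made theorem for multiplicative sets, \cite[Thm.~4]{Moreediv} with $\tau=5/12$, which directly gives the error $O\bigl(x(\log\log x)^5/(\log x)^{19/12}\bigr)$. It then identifies $C_0$ via Proposition~4 of \cite{FMS}, whose hypothesis on the prime count is again met by this sharp estimate.

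Two smaller points. First, the formula you expect in Step~1 is false for $d$ with at least two distinct prime factors. Your dichotomy overlooks the case $\ord_d(2)=2\ord_d(4)$ with $2^{\ord_d(4)}$ a square root of $1$ other than $\pm1$; there the cycles have even length $2\ord_d(4)$. For example, $\ord_{21}(4)=3$ but $2^3=8\not\equiv\pm1\pmod{21}$, so your sum gives $1+1+2=4$ while $\mc{O}(21)=2$. The correct formula (Proposition~\ref{prop:ord4-formula}) carries the extra condition $2^{\ord_d(4)}\equiv\pm1\pmod d$. This does not hurt your conclusions. You only use the formula for $d$ prime or a power of $3$, where $(\Z/d\Z)^\times$ is cyclic and the dichotomy holds, and through the fact that an even $\ord_d(4)$ forces even cycles. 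With that repair, your characterization of $\mc{O}(m)=0$ is the paper's Lemma~\ref{lem-even}, and your Step~3 is exactly the paper's argument with $p=3$. Second, Wirsing's main term is $\frac{e^{-\gamma\tau}}{\Gamma(\tau)}\frac{x}{\log x}\prod_{p\le x}g(p)$, with $\gamma$ Euler's constant and $\tau=5/12$. Without the factor $e^{-5\gamma/12}$, Mertens' theorem produces $e^{5\gamma/12}C_0$ rather than $C_0$.
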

Using Artin primitive root techniques one can say rather more in case $e\ge 1$.
We hope to come back to this issue in a future publication.

\subsection{Organization of the paper}
Sections~\ref{sec:formulas-for-On}--\ref{Alter interp sec} develop order-theoretic, graph-theoretic, polynomial, and orbit-counting descriptions of $\mc{O}(n)$, recovering several earlier results along the way. Section~\ref{initial consid sec} introduces the near-primitive-root framework and some preliminary density computations. Section~\ref{Pf of thm 2 sec} applies these tools to prove Theorem~\ref{main}. Section~\ref{Mult ord sec} studies the distribution of $\mc{O}(p)$ in arithmetic progressions and proves Theorem~\ref{thm:p1ad}. Section~\ref{prime-values sec} considers prime values of $\mc{O}(p)$ and proves Theorem~\ref{CAC-prime-values thm}, while Section~\ref{Onintegerdistribution} treats the distribution of $\mc{O}(n)$ over odd integers and proves Theorem~\ref{density O(m) is zero thm}. Finally, Section~\ref{pf of thm 1 sec} proves Theorem~\ref{tight equi code thm}.

\section{Formulas for \texorpdfstring{$\mc{O}(n)$}{O(n)}}\label{sec:formulas-for-On}
We begin by recalling a formula of Fu, Lo and Shum for the number of odd
cycles in $G_n$.  We then give an equivalent formulation (Proposition \ref{prop:ord4-formula}), involving only
the multiplicative order of $4$, which will be particularly
convenient in the subsequent sections.
\begin{defi}[order and index] \label{mult ord def}
Let $a\in \Q^{\times}$ and $n\ge 2$. Assume that $\nu_p(a)=0$ for every prime $p\mid n$; equivalently, if $a=r/s$ is written in lowest terms, then $\gcd(rs,n)=1$. The reduction of $a$ modulo $n$ is then the element $
\overline{a}:=r s^{-1}\in (\Z_{n})^{\times}$. Then the smallest integer $k\ge 1$ such that $\overline{a}^k\equiv 1\pmod{n}$ is called the \emph{multiplicative order} of $a$ modulo $n$ and denoted by $\ord_n(a)$. Differently phrased it is the order of the subgroup $\langle \overline{a}\rangle$ generated by $\overline{a}$ in $\Z_{n}^{\times}$. The index of that subgroup is denoted by $\ind_n(a)$ and called the \emph{index} of $a$ in $\Z_{n}^{\times}$. We have
$$\ind_n(a)=\frac{\varphi(n)}{\ord_n(a)}.$$
\end{defi}
If space allows it, we prefer to write out the expression for the index in terms of the order.
\begin{thm}[Theorem 1 of \cite{FLS}]
\label{thm-FLS}
For odd $n \geq 3$, we have 
$$\mc{O}(n)=\sum_{d\, \mid \, n} \delta_{d},$$
where
$$
\delta_{d}=
\begin{cases}
\frac{1}{2}\ind_d(2) & \ \textrm{if}\ \ord_d(2)\textrm{~is~odd~and~} >1;\\[1ex]
\ind_d(2) & \ \textrm{if}\ \ord_d(2) \equiv 2 \pmod{4}, \ 2^{\ord_d(2)/2}\equiv-1\pmod{d};\\[1ex]
0 & \textrm{~otherwise}.
\end{cases}
$$
\end{thm}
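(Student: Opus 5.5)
Decompose $V=\{1,\dots,(n-1)/2\}$, viewed as $(\Z_n\setminus\{0\})/\{\pm1\}$, according to the gcd with $n$. For each divisor $d\mid n$ with $d>1$, the elements $a\in\Z_n\setminus\{0\}$ with $n/\gcd(a,n)=d$ are exactly $a=(n/d)u$ with $u\in\Z_d^\times$. This set is stable under $\pm1$ and under multiplication by $2$. So $G_n$ is a disjoint union of subgraphs $G_n^{(d)}$, each isomorphic to the graph on $\Z_d^\times/\{\pm1\}$ with edges $u\mapsto 2u$. Therefore $\mc{O}(n)=\sum_{d\mid n,\,d>1}\mc{O}^{*}(d)$, where $\mc{O}^*(d)$ counts odd cycles in this reduced graph. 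Setting $\delta_1=0$, which matches the formula since $\ord_1(2)=1$, it suffices to show $\mc{O}^*(d)=\delta_d$.

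The cycles of the reduced graph are the orbits of $\langle 2\rangle$ acting on the quotient group $Q=\Z_d^\times/\{\pm1\}$ (for $d\ge3$). They are the cosets of the image $\overline{\langle2\rangle}$ of $\langle2\rangle$ in $Q$. Hence all cycles have the same length $L=|\overline{\langle 2\rangle}|$, and their number is $|Q|/L=\varphi(d)/(2L)$. Let $m=\ord_d(2)$. Then $L$ is the smallest $r\ge1$ with $2^r\equiv\pm1\pmod d$. If $-1\in\langle 2\rangle$ we get $L=m/2$; this requires $m$ even, since $-1\ne1$ for $d\ge3$. Otherwise $L=m$. So the number of cycles is $\frac12\ind_d(2)$ in the second case and $\ind_d(2)$ in the first.

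Now sort by parity. If $m$ is odd, then $-1\notin\langle2\rangle$, since the group is cyclic of odd order, and so $L=m$. The cycles then have odd length, except for the loop issue: a cycle of length $1$ still counts, because loops count as cycles of length $1$. This gives $\frac12\ind_d(2)$ odd cycles. For $m=1$ we have $2\equiv1\pmod d$, which is impossible for $d>1$, so the condition ``$>1$'' in the statement is automatic for $d>1$ and only serves to exclude $d=1$. If $m$ is even and $-1\in\langle2\rangle$, then necessarily $-1=2^{m/2}$, the unique element of order $2$ in the cyclic group. In that case $L=m/2$ is odd exactly when $m\equiv2\pmod 4$, and this gives $\ind_d(2)$ odd cycles. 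In the remaining cases $L$ is even and there are no odd cycles: either $m$ is even with $2^{m/2}\not\equiv-1$, or $m\equiv0\pmod4$ with $2^{m/2}\equiv-1$. Summing over $d$ gives the theorem.

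The only delicate point is bookkeeping at small $d$. For $d=3$ we have $Q$ trivial, and the loop $\{n/3\}$ must be accounted for. Here $m=2$, $2^1\equiv-1\pmod 3$, and $\ind_3(2)=1$, so it falls in the second case and contributes $1$, as it should. I would also check that the identification of cycle length with the order in $Q$ is correct for loops.
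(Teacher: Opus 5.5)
Your proof is correct and is essentially the paper's argument. The decomposition by $d=n/\gcd(a,n)$, with common cycle length $\min\{r\ge 1: 2^r\equiv\pm1\pmod d\}$, is exactly the proof of Proposition~\ref{prop:ord4-formula}; your case split on whether $-1\in\langle 2\rangle\subseteq\Z_d^\times$ is the paper's distinction between reciprocal pairs $C_i\neq C_{-i}$ and self-reciprocal cosets $C_i=C_{-i}$ in its reproof via Proposition~\ref{reciprocal poly prop}, and your coefficient $\frac12\ind_d(2)$ for odd $\ord_d(2)$ is the right one (the paper's displayed formula closing that reproof writes $\ind_d(2)$ there, which overcounts, e.g.\ for $n=7$).
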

The next proposition is a variant of Theorem~\ref{thm-FLS} involving only $\ord_d(4)$, rather than $\ord_d(2)$.
\begin{prop}\label{prop:ord4-formula}
For every odd integer $n\geq 3$, we have
\begin{equation}\label{O(n) defn interms ord of 4}
    \mc{O}(n)=\sum_{\substack{d\, \mid \, n, \ d >1 \\ 2\,\nmid\, \ord_{d}(4)\\ 2^{\ord_d(4)}\equiv\pm1\pmod{d}}} \frac{\varphi(d)}{2\ord_{d}(4)}.
\end{equation}
\end{prop}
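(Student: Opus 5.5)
Starting from Theorem~\ref{thm-FLS}, the plan is to show termwise that $\delta_d$ equals the summand in \eqref{O(n) defn interms ord of 4} (and $0$ when $d$ is excluded). The term $d=1$ needs no care: $\ord_1(2)=1$ is neither odd and $>1$ nor $\equiv 2\pmod 4$, so $\delta_1=0$. Hence we may fix an odd $d>1$ and write $m=\ord_d(2)$. The basic relation is $\ord_d(4)=m/\gcd(m,2)$, so $\ord_d(4)=m$ if $m$ is odd and $\ord_d(4)=m/2$ if $m$ is even. In particular $\ord_d(4)$ is odd exactly when $m$ is odd or $m\equiv 2\pmod 4$.

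Case $m$ odd, $m>1$. Here $\ord_d(4)=m$ is odd and $2^{\ord_d(4)}=2^m\equiv 1\pmod d$, so $d$ lies in the summation range. Its summand is $\varphi(d)/(2m)=\tfrac12\ind_d(2)=\delta_d$.

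Case $m$ odd, $m=1$. Then $2\equiv1\pmod d$, which forces $d=1$. This is excluded, so the case does not occur.

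Case $m\equiv 2\pmod 4$. Now $\ord_d(4)=m/2$ is odd. Since $(2^{m/2})^2\equiv 1$ while $2^{m/2}\not\equiv 1$ (by minimality of $m$), the condition $2^{\ord_d(4)}\equiv\pm1\pmod d$ holds iff $2^{m/2}\equiv -1\pmod d$. This is exactly the FLS condition. When it holds, the summand is $\varphi(d)/(2\cdot m/2)=\varphi(d)/m=\ind_d(2)=\delta_d$. When it fails, $d$ is excluded and $\delta_d=0$, so both sides agree.

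Case $4\mid m$. Then $\ord_d(4)=m/2$ is even, so $d$ is excluded, and $\delta_d=0$ as well.

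Conversely, every $d$ in the summation range arises from one of these cases. Odd $\ord_d(4)$ forces $m$ odd or $m\equiv2\pmod 4$, and in the latter case the $\pm1$ condition reduces to $2^{m/2}\equiv-1$ as shown. The termwise match therefore gives the identity.

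I expect no real obstacle. The one point needing care is the third case: a careless reading would let $2^{\ord_d(4)}\equiv +1$ occur when $m\equiv2\pmod4$, and this is ruled out by the minimality of $m$. The case $m$ odd needs no condition at all, because $2^m\equiv1$ automatically.
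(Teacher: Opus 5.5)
Your proof is correct. With $m=\ord_d(2)$ and $\ord_d(4)=m/\gcd(m,2)$, every $d$ in the summation range falls under one of the two nonzero cases of Theorem~\ref{thm-FLS}, with the same value. Every other $d$, including $d=1$ and the $d$ with $4\mid m$, has $\delta_d=0$. You also correctly use the minimality of $m$, rather than the false implication $x^2\equiv1\Rightarrow x\equiv\pm1\pmod d$, to reduce the $\pm1$ condition to $2^{m/2}\equiv-1\pmod d$.

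The paper takes a different route and does not invoke Theorem~\ref{thm-FLS} at all. It works directly in $G_n$:
\begin{itemize}
\item it splits the vertices into the doubling-invariant sets $\Omega_n(d)$, each of size $\varphi(d)/2$;
\item it shows that every cycle in $\Omega_n(d)$ has length $s_d=\min\{r\ge1:2^r\equiv\pm1\pmod d\}$;
\item it proves $s_d$ equals $\ord_d(4)$ if $2^{\ord_d(4)}\equiv\pm1\pmod d$ and $2\ord_d(4)$ otherwise, and then counts the odd cycles.
\end{itemize}

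Your route is shorter given the cited result, and it makes the equivalence of the two formulations explicit. In particular, it shows the congruence condition is automatic when $\ord_d(2)$ is odd and only matters when $\ord_d(2)\equiv2\pmod 4$. However, it is only as self-contained as the FLS theorem you take as a black box. The paper's argument is independent of FLS and also shows that all cycles in each $\Omega_n(d)$ have the same length. The paper later reproves Theorem~\ref{thm-FLS} itself via cyclotomic cosets, so there it treats the two formulas as consequences of the cycle structure rather than deriving one from the other.
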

\begin{proof}
Identify the vertices of $G_n$ with the nonzero residue classes modulo
$n$, up to sign.  For each divisor $d>1$ of $n$, let
\[
  \Omega_n(d)\coloneq \left\{[a]\in (\Z _n\setminus\{0\})/\{\pm1\}:\frac{n}{\gcd(a,n)}=d\right\}.
\]
The set $\Omega_n(d)$ has $\varphi(d)/2$ elements and is invariant under the map $[a]\mapsto[2a]$.  Every cycle contained in $\Omega_n(d)$ has length $s_d\coloneq \min\{r\geq1:2^r\equiv\pm1\pmod{d}\}$. Put $q_d=\ord_d(4)$. Since $4^{q_d}\equiv1\pmod{d}$, we have $2^{2q_d}\equiv1\pmod{d}$, and hence $s_d\mid 2q_d$. Conversely, if $2^r\equiv\pm1\pmod{d}$, then $4^r\equiv1\pmod{d}$;
therefore $q_d\mid r$.  It follows that
\[
  s_d=
  \begin{cases}
     q_d,  & \text{if }2^{q_d}\equiv\pm1\pmod{d},\\
     2q_d, & \text{otherwise}.
  \end{cases}
\]
Consequently, the cycles contained in $\Omega_n(d)$ have odd length if
and only if $q_d$ is odd and $2^{q_d}\equiv\pm1\pmod{d}$. When these conditions hold, $\Omega_n(d)$ consists of
\[
  \frac{|\Omega_n(d)|}{q_d}=\frac{\varphi(d)}{2q_d}
\]
odd cycles.  Summing over the divisors $d>1$ of $n$, we obtain \eqref{O(n) defn interms ord of 4}.
\end{proof}
\begin{rem}
\label{firstremark}
The congruence condition in \eqref{O(n) defn interms ord of 4} is not redundant
for composite $d$, since the congruence $x^2\equiv1\pmod{d}$ may have
solutions other than $x\equiv\pm1\pmod{d}$.  When $d=p$ is an odd prime,
however, the condition is automatic.  Thus
\[
  \mc{O}(p)=
  \begin{cases}
  \frac{1}{2}\ind_p(4),
      & \text{if $\ord_p(4)$ is odd};\\[8pt]
  0,  & \text{otherwise}.
  \end{cases}
\]
The graph-theoretic meaning of this prime-length specialization will be
developed in the next section.
\end{rem}

\section{A new interpretation of \texorpdfstring{$\mc{O}(p)$}{Op} through a graph equivalent of \texorpdfstring{$G_p$}{Gp}} \label{Alter gp for O(p) sec}
We introduce a graph that is simpler to analyze yet structurally equivalent to $ G_n $; that is, it is isomorphic to $ G_n $ in some cases.
Subsequently, we show how it can be used to give a new interpretation of $\mc{O}(p)$ with $p\ge 3$ a prime.
\begin{defi} Let $n$ be a natural number. Define a graph $H_n = (V, E)$ as follows:
\begin{itemize}
	\item The vertex set $V$ is the set of non-zero squares in $\Z_{n}$, i.e.,
	\[
	V = \{ x^2 \pmod{n} : x \in \Z_n \} \setminus \{0\}.
	\]
	\item The edge set $E \subseteq V \times V$ consists of ordered pairs $(a,b)$ such that
	\[
	b \equiv 4a \pmod{n}.
	\]
\end{itemize}
\end{defi}

\begin{prop}
    Let $p\geq 3$ be a prime.
Then $G_p$ and $H_p$ are isomorphic as directed graphs.
\end{prop}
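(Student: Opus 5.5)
The plan is to write down an explicit bijection $\phi\colon V(G_p)\to V(H_p)$ and check that it carries edges to edges. The natural choice is the squaring map $\phi([a])=a^2 \bmod p$. Recall that $V(G_p)=\{1,\dots,(p-1)/2\}$ is a set of representatives of $\F_p^\times/\{\pm1\}$, and $V(H_p)=(\F_p^\times)^2$, the nonzero squares modulo $p$. Since $a^2=(-a)^2$, squaring is well defined on classes $\{\pm a\}$. So $\phi(a)=a^2\bmod p$ is a well-defined map from $V(G_p)$ into $V(H_p)$.

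Next I show that $\phi$ is a bijection.
\begin{itemize}
\item \emph{Injectivity.} If $a^2\equiv b^2\pmod p$, then $p\mid (a-b)(a+b)$. Because $p$ is prime, this forces $b\equiv\pm a$. For $a,b\in\{1,\dots,(p-1)/2\}$ that means $a=b$.
\item \emph{Surjectivity.} Every nonzero square $x^2$ has $x\not\equiv0$, and $x\equiv\pm a$ for a unique $a\in V(G_p)$. Hence $x^2=\phi(a)$.
\end{itemize}
As a check, both vertex sets have $(p-1)/2$ elements. This step is where primality is used: for composite $n$, $x^2\equiv1$ can have roots other than $\pm1$, as noted in Remark~\ref{firstremark}, and squaring can fail to be injective.

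Finally, edges correspond. In $G_p$, $(a,b)$ is an edge iff $b\equiv\pm2a\pmod p$. If so, then $\phi(b)=b^2\equiv4a^2=4\phi(a)$, so $(\phi(a),\phi(b))$ is an edge of $H_p$. Conversely, suppose $\phi(b)\equiv4\phi(a)$, that is, $b^2\equiv(2a)^2$. Then primality again gives $b\equiv\pm2a$, so $(a,b)$ is an edge of $G_p$. Thus $\phi$ preserves and reflects directed edges, and is an isomorphism of directed graphs.

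Loops cause no trouble. A loop occurs at $a$ in $G_p$ iff $2a\equiv\pm a$, i.e.\ $p=3$. A loop occurs at $a^2$ in $H_p$ iff $4a^2\equiv a^2$, i.e.\ $p=3$. The edge correspondence above handles both cases uniformly.

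No step is a real obstacle. The only care needed is to use primality (uniqueness of the square roots $\pm x$) in both the injectivity argument and the edge-reflection argument.
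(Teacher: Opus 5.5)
Your proof is correct and matches the paper's: the same squaring map $a\mapsto a^2 \bmod p$, bijectivity from the fact that $\pm a$ are the only square roots of $a^2$ modulo a prime, and edge reflection via $p\mid (b-2a)(b+2a)$. Your added remarks on vertex counts and loops (which occur only for $p=3$) are accurate but not needed.
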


\begin{proof}
    
Define the map 
\[
\phi  \colon  V(G_p) \to V(H_p) \quad \textrm{by} \quad \phi(a) = a^2 \pmod{p}.
\]
Since $p$ is prime, it implies that there are only two elements  $a, p-a \in \Z_{p} \setminus \{0\}$  whose square is equal to $a^2$.
Therefore, the map $\phi  \colon  V(G_p) \to V(H_p) $ is injective and surjective.
Now, suppose that there is a directed edge $a \to b$ in $G_p$. Then $b \equiv \pm 2a \pmod{p}$. Consider
\[
\phi(b) = b^2 \equiv (\pm 2a)^2 = 4a^2 \equiv 4 \phi(a) \pmod{p},
\]
so that there is an edge from $\phi(a)$ to $\phi(b)$ in $H_p$.
Hence, $\phi$ is a homomorphism of graphs.

In order to see that $\phi$ is a graph isomorphism, we only need to prove that whenever there is an edge between  $\phi(a) $  and $\phi(b)$, there is an edge between $a$ and $b$. 
 If $(\phi(a),\phi(b)) \in E(H_p)$,
 then $b^2-4a^2\equiv 0 \pmod{p}$ and 
 hence either $p\mid b+2a  $ or $p\mid b-2a $. Thus, there is an edge connecting  $a$ and $b$ in the graph $G_p$.
Hence, $\phi$ is a graph isomorphism from $G_p$ to $H_p$. \end{proof}

\begin{rem}
If $ n $ is not a prime number, then
$G_n$ is not necessarily isomorphic 
to $H_n$. Even in the case where $ n $ is squarefree, several challenges arise, namely:
    \begin{itemize}
        \item The sets of vertices in the corresponding graphs may not be identical in size.
        \item The  congruence $ x^2 \equiv 1 \pmod{n} $ can have up to $ 2^{\omega(n)} $ solutions, which may result in the presence of edges in $ H_n $ that do not appear in $ G_n$ (as usual $\omega(n)$ denotes the number of distinct prime divisors of $n$).
    \end{itemize}
   
\end{rem}
Although the graphs  $G_p$ and $H_p$ are isomorphic whenever $p$ is prime, the graph $H_p$ is often easier to analyze. This is because its vertex set  $V$ forms a subgroup of $\Z_{p}^{\times}$, and the structure of $H_p$ reveals that each disjoint cycle corresponds to a coset of the subgroup $\langle 4 \rangle$ in $V$. Since every cycle in $H_p$ has the same length, namely $\ord_p(4)$, 
the same holds for the graph $G_p$. Therefore, we have the following corollary:
\begin{cor}\label{no of odd cycle cor}
 \label{Opequivalence}
Let $p$ be an odd prime.
The number $\mc{O}(p)$ of odd cycles in $G_p$ is given by
$$
\mc{O}(p)=
\begin{cases}
[(\F_p^{\times})^{2} : \langle 4\rangle]=\frac{1}{2}\ind_p(4) & \ \textrm{if}\ \ord_p(4)\ \textrm{is odd};\\
0 & \textrm{~otherwise}.
\end{cases}
$$
\end{cor}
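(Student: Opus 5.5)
The plan is to read the cycle structure of $G_p$ off the isomorphic graph $H_p$ established in the preceding proposition. Since isomorphism preserves cycle lengths, it suffices to count the odd cycles of $H_p$.

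The vertex set of $H_p$ is the subgroup $(\F_p^\times)^2$ of squares, which has order $(p-1)/2$. The edge map $a\mapsto 4a$ is multiplication by the element $4=2^2$, and $4$ lies in this subgroup. The orbits of this permutation are therefore exactly the cosets $a\langle 4\rangle$ inside $(\F_p^\times)^2$. Each coset is a single directed cycle of length $\ord_p(4)$: the vertex $a$ returns to itself after $r$ steps precisely when $4^r a=a$, that is, when $4^r=1$. A loop occurs exactly when $\ord_p(4)=1$, which matches the convention that a loop counts as a cycle of length $1$.

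Hence all cycles of $G_p$ have the common length $\ord_p(4)$, and their number is
$$[(\F_p^\times)^2:\langle 4\rangle]=\frac{(p-1)/2}{\ord_p(4)}=\frac12\ind_p(4).$$
If $\ord_p(4)$ is odd, every cycle is odd, so $\mc{O}(p)$ equals this index. If $\ord_p(4)$ is even, no cycle is odd, so $\mc{O}(p)=0$.

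As a consistency check, this agrees with Remark~\ref{firstremark}, which is the specialization of Proposition~\ref{prop:ord4-formula} to $n=p$: there the only divisor $d>1$ of $p$ is $d=p$, and the congruence $2^{\ord_p(4)}\equiv\pm1\pmod p$ holds automatically. There is no real obstacle in this argument. The only point needing care is that $4$ actually lies in the vertex subgroup, so that multiplication by $4$ maps squares to squares, and this holds because $4$ is itself a square.
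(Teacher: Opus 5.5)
Your proposal is correct and follows essentially the same route as the paper. Both transfer the problem to $H_p$ via the isomorphism, identify the cycles with the cosets of $\langle 4\rangle$ in the subgroup $(\F_p^\times)^2$, note that they all have length $\ord_p(4)$, and count them as $[(\F_p^\times)^2:\langle 4\rangle]=\frac12\ind_p(4)$.
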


\begin{rem}
\label{rem:equalindices}
The squaring map on $\F_p^{\times}$ has kernel $\{\pm1\}$ and image $(\F_p^{\times})^2$. Its restriction to $\langle -1,2\rangle$ has the same kernel and image $\langle4\rangle$. Hence $|\F_p^{\times}|=2|(\F_p^{\times})^2|$, $|\langle-1,2\rangle|=2|\langle4\rangle|$,
and therefore
\[
[(\F_p^{\times})^2:\langle4\rangle]=[\F_p^{\times}:\langle-1,2\rangle].
\]
The index $[\F_p^{\times}:\langle-1,2\rangle]$
occurs naturally in constructions of optimal conflict-avoiding codes. Hsia, Li and Sun~\cite{HLSdiagonal} related the relevant coset condition to the solvability of certain diagonal equations over finite
fields. In this context they also pointed out the Artin-type problem of determining primes for which this index takes a prescribed value. Their results concern the construction of optimal codes once the index is
fixed, whereas our interest here is in the distribution of the corresponding values of $\mc{O}(p)$.
\end{rem}

\noindent On noting that 
\[
\ord_p(4)=
\ord_p(2) \cdot 
\begin{cases}
\frac12 & \ \textrm{if}\ \ord_p(2)\textrm{~is~even};\\[4pt]
1 & \textrm{~otherwise},
\end{cases}
\]
we can alternatively write 
$$\mc{O}(p)=
\ind_p(2) \cdot
\begin{cases}
\frac12 & \ \textrm{if}\ \ord_p(2)\ \textrm{is odd};\\[1ex]
1 & \ \textrm{if}\ \ord_p(2)\equiv 2\pmod{4};\\[1ex]
0 & \textrm{~otherwise}.
\end{cases}
$$
{}From this 
and the second supplementary law of quadratic reciprocity, stating that
\begin{equation}
\label{QR}    
\biglegendre{2}{p}=(-1)^{\frac{p^2-1}8}
=
\begin{cases}
+1 & \ \textrm{if}\ p\equiv \pm 1\pmod{8};\\
-1 & \ \textrm{if}\ p\equiv \pm 3\pmod{8},
\end{cases}
\end{equation}
where $\legendre{\cdot}{p}$ denotes the Legendre symbol,
we obtain an easy reproof of the following result of 
Fu, Lo and Shum \cite{FLS}.
\begin{thm}[Theorem 3 of \cite{FLS}]
If $p$ is an odd prime, then
$$
\mc{O}(p)=\ind_p(2) \cdot
\begin{cases}
\frac{1}{2} & \textrm{if}\  p \equiv 7 \pmod{8}, \ \textrm{or}\ \ p \equiv 1 \pmod{8} \ \textrm{and}\  \ord_p(2)\ \textrm{is odd};\\[1ex]
1 & \textrm{if}\  p \equiv 3 \pmod{8}, \ \textrm{or}\ \ p \equiv 1 \pmod{8} \ \textrm{and}\  \ord_p(2)\equiv 2\pmod{4};\\[1ex]
0 & \textrm{if}\  p \equiv 5 \pmod{8}, \ \textrm{or}\ \ p \equiv 1 \pmod{8} \ \textrm{and}\  4 \mid \ord_p(2).
\end{cases}
$$
\end{thm}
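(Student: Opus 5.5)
The plan is to start from the reformulation just obtained, namely
$$\mc{O}(p)=\ind_p(2)\cdot\begin{cases}\tfrac12 & \text{if }\ord_p(2)\text{ is odd};\\ 1 & \text{if }\ord_p(2)\equiv 2\pmod 4;\\ 0 & \text{if }4\mid\ord_p(2),\end{cases}$$
which itself comes from Corollary~\ref{no of odd cycle cor} together with the relation between $\ord_p(4)$ and $\ord_p(2)$. The remaining work is to convert the three conditions on $\ord_p(2)$ into conditions on $p\bmod 8$, plus a residual condition on $\ord_p(2)$ when $p\equiv1\pmod 8$. The tool is Euler's criterion combined with \eqref{QR}: $2$ is a square modulo $p$ iff $\ord_p(2)\mid (p-1)/2$. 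Throughout I write $p-1=2^{v}m$ with $m$ odd.

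\emph{Case $p\equiv 3\pmod 8$.} Here $v=1$ and $\legendre{2}{p}=-1$, so $\ord_p(2)\nmid (p-1)/2=m$. Since $\ord_p(2)\mid 2m$, the order must be even with $2$-adic valuation exactly $1$. Hence $\ord_p(2)\equiv2\pmod4$ and $\mc{O}(p)=\ind_p(2)$.

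\emph{Case $p\equiv 7\pmod 8$.} Again $v=1$, but now $\legendre{2}{p}=1$, so $\ord_p(2)\mid m$ is odd and $\mc{O}(p)=\tfrac12\ind_p(2)$.

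\emph{Case $p\equiv 5\pmod 8$.} Here $v=2$ and $\legendre{2}{p}=-1$, so $\ord_p(2)\nmid (p-1)/2=2m$ while $\ord_p(2)\mid 4m$. This forces $4\mid\ord_p(2)$, and $\mc{O}(p)=0$.

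\emph{Case $p\equiv 1\pmod 8$.} Quadratic reciprocity only tells us that $2$ is a square, i.e.\ $\ord_p(2)\mid (p-1)/2$. This does not determine the $2$-part of the order, so the three possibilities ($\ord_p(2)$ odd, $\equiv 2\pmod 4$, or divisible by $4$) all remain. The formula then applies verbatim, which gives exactly the three subcases listed in the statement.

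There is no real obstacle here. The only point needing care is to check that each of the four residue classes lands in exactly one branch, so that the stated case distinctions are mutually exclusive and exhaustive. That follows at once, since the three conditions on $\ord_p(2)$ partition all possibilities.
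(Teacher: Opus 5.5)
Your proposal is correct and follows essentially the same route as the paper: both rewrite $\mc{O}(p)$ in terms of whether $\ord_p(2)$ is odd, $\equiv 2\pmod 4$, or divisible by $4$, and then translate these conditions into residue classes of $p$ modulo $8$ via the supplementary law \eqref{QR}. The only difference is that you carry out the case analysis explicitly through Euler's criterion and the $2$-adic valuation of $p-1$, which the paper leaves to the reader.
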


Apart from its role in the existence of tight equi-difference codes, the quantity $\mc{O}(n)$ is also relevant to the computation of $M(n)$ and $M^{e}(n)$. When studying conflict-avoiding codes of prime length $p$ and weight $3$, it is evident that there are no exceptional codewords. For an odd prime $p > 3$, the difference set $\Delta(x)$ has size $4$ if $x$ is an equi-difference codeword, and $6$ otherwise. It follows from Corollary~\ref{no of odd cycle cor} that the graph $G_p$ consists entirely of either odd or even cycles, depending on whether $\ord_{p}(4)$ is odd or even, respectively.

To construct an optimal $\mathrm{CAC}$, it is natural to include as many equi-difference codewords as possible. By Proposition~\ref{tight equi-diff prop}, a tight equi-difference code of length $p$ exists if and only if $\ord_{p}(4)$ is even. In this case, by the definition of tightness, we have $M(p) = M^{e}(p) = \tfrac{p-1}{4}$.

When $\ord_{p}(4)$ is odd, we can still construct an equi-difference code $\mathcal{C} \in \mathrm{CAC}^{e}(p)$ that includes all but one vertex from each cycle in $G_p$. Equivalently, there exists a matching in $G_p$ that covers all but one vertex of every cycle. This leads us to the following result:
\[
M^{e}(p)=\frac{p-1}{4}-\frac{\mc{O}(p)}{2}.
\]
As a consequence, we have $M(p) = M^{e}(p)$ whenever $\ord_{p}(4)$ is even. Moreover, by case \ref{non equi-diff} in Section~1.2 any non-equi-difference codeword $x=\{0, a, c\}$ corresponds to a subset $S = \{a, b, c\}$ satisfying the condition $a + b \equiv \pm c \pmod{p}$. Therefore, if $\ord_{p}(4)$ is odd, we obtain the upper and lower bound
for  $M(p)$ given in \eqref{M(p) ineq}.
Furthermore, the authors of \cite{FLS} propose an algorithm for constructing non-equi-difference codewords. They conjecture that, for any prime length $p$, the algorithm successfully produces $\bigl\lfloor \tfrac{\mc{O}(p)}{3} \bigr\rfloor$ such codewords. Subsequent work of Hsia, Li and Sun
\cite{HLSdiagonal,HLScyclo} showed that the upper bound in \eqref{M(p) ineq} is attained in several cases. This connection between $\mc{O}(p)$ and the upper bound in~\eqref{M(p) ineq}
will be combined in Section~\ref{prime-values sec} with our density results to prove
Theorem~\ref{CAC-prime-values thm}.

\begin{rem}
\label{rem:Girstmair}
The order condition occurring in Corollary \ref{no of odd cycle cor} also has an interpretation
in terms of digit expansions of rational numbers. Recall that the length
of the period of the base-$b$ expansion of $1/p$ is
$\ord_p(b)$. Recent work of Girstmair
\cite{G1,G2} gives formulas for the mean,
and more generally for the mean and variance, of the digits in such a
period under prescribed conditions on $\ord_p(b)$.
Specializing to $b=4$ gives a direct connection with $\mc{O}(p)$.

Indeed, if $\mc{O}(p)>0$, then Corollary \ref{no of odd cycle cor} gives
\[
  \ord_p(4)=\frac{p-1}{2\mc{O}(p)},
\]
and this order is odd. Girstmair's formula for an odd period of length $\ell=\frac{p-1}{q}$ therefore applies with $q=2\mc{O}(p)$. On the other hand, if $\mc{O}(p)=0$, then
$\ord_p(4)$ is even, and the digits in one period of the
base-$4$ expansion of $1/p$ have mean $3/2$. Since the correction term
in Girstmair's formula for an odd period is nonzero, combining his result
with Corollary \ref{no of odd cycle cor} gives the characterization
\[
  \mc{O}(p)=0
  \quad\Longleftrightarrow\quad
  \textrm{the digits in a period of the base-$4$ expansion of}\ \,
  \frac{1}{p}\ \,
  \textrm{have mean }\frac{3}{2}.
\]
More specifically, Girstmair \cite{G2} treats periods satisfying $\ord_p(b)=\frac{p-1}{2^m}$. For $b=4$, in the cases in which this order is odd, Corollary~1 gives $\mc{O}(p)=2^{m-1}$. Thus his mean and variance formulas provide arithmetic information about the base-$4$ digit expansions attached to these particular level sets of $\mc{O}(p)$.
\end{rem}

\section{An Alternative Interpretation of \texorpdfstring{$\mc{O}(n)$}{MC0n}}\label{Alter interp sec}
In this section, we present an alternative interpretation of $\mc{O}(n)$, which involves the concept of cyclotomic cosets and good integers.
\subsection{Cyclotomic coset, minimal polynomials}
In Section~\ref{sec:formulas-for-On}, we obtained two formulas for $\mc{O}(n)$ directly from the cycle structure of $G_n$.  We now recover these formulas from the factorization of $x^n-1$ over $\F_2$. This provides a different interpretation of $\mc{O}(n)$ in terms of reciprocal pairs and self-reciprocal irreducible factors.

Let $\F_{q}$ denote a finite field of $q=p^{m}$ elements where $p$ is a prime and $m$ is a positive integer. Let $n$ be a positive integer such that $(n,q)=1$. The cyclotomic coset of $q$ modulo $n$ containing $i$ is defined by
\[
C_{i}=\{i \cdot q^{j} \pmod{n} : j=0,1, \dots\}.
\]
A subset $\{i_{1},\dotsc,i_{t}\}$ of $\Z_{n}$ is called a complete set of representatives of cyclotomic cosets of $q$ modulo $n$ if $C_{i_1}, \dotsc, C_{i_{t}}$ are distinct and $\cup_{j=1}^{t}C_{i_{j}}= \Z_{n}$. 
The following is a well-known result which connects cyclotomic cosets with the factorization of $x^{n}-1$ into monic irreducible polynomials over $\F_{q}$.
\begin{prop}
\label{cosetfactorization}
Let $q$ be a prime power and $n$ be a positive 
integer coprime to $q$. Let $\alpha$ be a primitive $n$-th root of unity in $\F_{q^m}$ with $m=\ord_{n}(q)$. 
Let $\{s_{1},\dotsc,s_{t}\}$ be a complete set of representatives of cyclotomic cosets of $q$ modulo $n$. Then the polynomial $x^{n}-1$ factorizes into monic irreducible polynomials over $\F_{q}$ as follows:
\begin{equation}\label{irred fact of cyclo eq}
x^{n}-1= \prod_{i=1}^{t} \prod_{j \in C_{s_{i}}} (x-\alpha^{j})= \prod_{i=1}^{t}f_i(x).
\end{equation}
\end{prop}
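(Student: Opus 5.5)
We prove Proposition~\ref{cosetfactorization} in two stages. First we establish the product identity $x^n-1=\prod_{j\in\Z_n}(x-\alpha^j)$. Then we show that each block $f_i(x)=\prod_{j\in C_{s_i}}(x-\alpha^j)$ is a monic irreducible polynomial in $\F_q[x]$.

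\emph{Existence of $\alpha$ and the splitting.} Since $m=\ord_n(q)$, we have $n\mid q^m-1$. The group $\F_{q^m}^\times$ is cyclic of order $q^m-1$, so it contains an element $\alpha$ of exact order $n$. The elements $\alpha^j$, $j\in\Z_n$, are then $n$ distinct roots of $x^n-1$ in $\F_{q^m}$. Because $\deg(x^n-1)=n$, this gives
\[
x^n-1=\prod_{j\in\Z_n}(x-\alpha^j).
\]
The cyclotomic cosets $C_{s_1},\dots,C_{s_t}$ partition $\Z_n$. Indeed, they are the orbits of the permutation $j\mapsto qj$ of $\Z_n$, which is a permutation because $\gcd(q,n)=1$. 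Regrouping the linear factors by coset gives the first equality in \eqref{irred fact of cyclo eq}.

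\emph{Coefficients lie in $\F_q$.} Let $\sigma\colon y\mapsto y^q$ be the Frobenius automorphism of $\F_{q^m}$ over $\F_q$. It sends $\alpha^j$ to $\alpha^{qj}$, so it permutes the set $\{\alpha^j: j\in C_{s_i}\}$. Applying $\sigma$ to the coefficients of $f_i$ therefore permutes its linear factors and leaves $f_i$ unchanged. Every coefficient of $f_i$ is thus fixed by $\sigma$, and the fixed field of $\sigma$ is $\F_q$. Hence $f_i\in\F_q[x]$.

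\emph{Irreducibility.} Let $g\in\F_q[x]$ be the minimal polynomial of $\alpha^{s_i}$. Its coefficients are fixed by Frobenius, so all conjugates $\alpha^{s_iq^k}$ are roots of $g$; this gives $f_i\mid g$. Conversely, $f_i$ is a polynomial over $\F_q$ vanishing at $\alpha^{s_i}$, so $g\mid f_i$. Both are monic, hence $g=f_i$, and $f_i$ is irreducible.

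The only step needing care is that $f_i$ has no repeated factors. This holds because the $\alpha^j$ with $j\in\Z_n$ are pairwise distinct, as $\alpha$ has exact order $n$, and because $C_{s_i}$ is taken as a set of residues modulo $n$. So there is no real obstacle: this is the standard argument, and distinct cosets give distinct irreducible factors.
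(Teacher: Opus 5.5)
Your proof is correct and complete. It is the standard textbook argument: group the $n$ distinct roots $\alpha^j$ by the $q$-orbits on $\Z_n$, use Frobenius-invariance of each block to get $f_i\in\F_q[x]$, and identify $f_i$ with the minimal polynomial of $\alpha^{s_i}$. You correctly note that this last step relies on the $\alpha^j$ being pairwise distinct.

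The paper gives no proof of its own, quoting the proposition as well known (in the spirit of Lidl--Niederreiter), so there is nothing further to compare. Your closing remark that disjoint cosets give distinct factors $f_i$ is exactly what the paper tacitly uses in Lemma~\ref{noofirrfactors} when it counts distinct irreducible factors.
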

From this result it is not difficult to derive the following classical result (Theorem 2.47 of \cite{LidlNiederreiter1, LidlNiederreiter2}).
\begin{lem}
\label{noofirrfactors}
Let $q$ be a prime power and $n$ be a positive 
integer coprime to $q$.
The polynomial $x^{n}-1$ factors
into
\begin{equation}\label{no of irred fact of cyclo}
 i_{q}(n)=\sum_{d\, \mid \, n}\frac{\varphi(d)}{\ord_{d}(q)}=\sum_{d\, \mid \, n}\ind_d(q)
\end{equation}
distinct irreducible factors over $\F_{q}$.
\end{lem}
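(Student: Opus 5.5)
By Proposition~\ref{cosetfactorization}, the monic irreducible factors of $x^n-1$ over $\F_q$ correspond bijectively to the cyclotomic cosets of $q$ modulo $n$. The plan is therefore to count these cosets. Since the factors $f_i$ are distinct (their root sets $\{\alpha^j: j\in C_{s_i}\}$ are disjoint), the number of distinct irreducible factors is $t$, the number of cosets. This step is immediate from \eqref{irred fact of cyclo eq}, because $x^n-1$ is separable when $\gcd(n,q)=1$.

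To count the cosets, we partition $\Z_n$ according to the value of $\gcd(i,n)$. For each divisor $d\mid n$, let
\[
\Omega(d)=\{i\in\Z_n:\ n/\gcd(i,n)=d\}.
\]
Multiplication by $q$ preserves $\gcd(i,n)$, since $q$ is a unit modulo $n$. Hence each $\Omega(d)$ is a union of cosets. The map $i\mapsto i/(n/d)$ identifies $\Omega(d)$ with $(\Z_d)^\times$, so $|\Omega(d)|=\varphi(d)$.

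Under this identification, the coset $C_i$ with $i\in\Omega(d)$ becomes the orbit $u\langle \overline{q}\rangle$ of the corresponding unit $u\in(\Z_d)^\times$. Orbits in the group $(\Z_d)^\times$ under multiplication by the subgroup $\langle\overline q\rangle$ are simply the cosets of that subgroup. Each therefore has exactly $\ord_d(q)$ elements, and there are $\varphi(d)/\ord_d(q)=\ind_d(q)$ of them. For $d=1$ we get the single coset $\{0\}$, which fits the formula since $\varphi(1)/\ord_1(q)=1$.

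Summing over $d\mid n$ gives \eqref{no of irred fact of cyclo}. The only point requiring care is verifying that the rescaling $i\mapsto i\cdot d/n$ is a bijection $\Omega(d)\to(\Z_d)^\times$ commuting with multiplication by $q$. This follows from writing $i=(n/d)u$ with $\gcd(u,d)=1$, and noting that $u$ is determined modulo $d$. I do not expect any real obstacle; this argument mirrors the decomposition used in the proof of Proposition~\ref{prop:ord4-formula}.
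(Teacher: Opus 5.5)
Your proposal is correct and takes essentially the same approach as the paper: partition $\Z_n$ by the value $d=n/\gcd(i,n)$, note that this class has $\varphi(d)$ elements, and write $i=(n/d)u$ with $\gcd(u,d)=1$ to show that every cyclotomic coset in it has size $\ord_d(q)$. The only difference is cosmetic: you phrase the count as cosets of $\langle\overline{q}\rangle$ in $(\Z_d)^\times$, whereas the paper computes directly that the least $r$ with $n\mid i(q^r-1)$ is the least $r$ with $d\mid q^r-1$.
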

\begin{proof}
By Proposition~\ref{cosetfactorization}, the number of distinct monic irreducible factors of $x^n-1$ over $\F_q$ is equal to the number of $q$-cyclotomic cosets modulo $n$. Let
\[
A_d=\left\{i\in\Z_n : \frac{n}{\gcd(i,n)}=d \right\}.
\]
Since $A_d$ is empty if $d\nmid n$, we will assume that $d$ divides $n$,
in which case $\# A_d=\varphi(d)$. We claim that every $q$-cyclotomic coset contained in $A_d$ has
cardinality $\ord_d(q)$. Indeed, if $i\in A_d$, then the cardinality of $C_i$ is the smallest positive integer $r$ such that $iq^r \equiv i\pmod{n}$. This is equivalent to $n\mid i(q^r-1)$. Writing $i=\frac{n}{d}u$, with $\gcd(u,d)=1$, we see that this condition is equivalent to $d\mid q^r-1$. The smallest such $r$ is precisely $\ord_d(q)$. Therefore $A_d$ is a disjoint union of $\ind_d(q)$ distinct $q$-cyclotomic cosets
and formula \eqref{no of irred fact of cyclo} is obtained.
The term corresponding to \(d=1\) accounts for the cyclotomic coset $C_0=\{0\}$, with the convention $\ord_1(q)=1$.
\end{proof}
We note that $x^n-1=\prod_{d\mid n}\Phi_d(x)$, where $\Phi_d(x)$ is the $d$th cyclotomic polynomial. The $d$th cyclotomic
polynomial factorizes into $\ind_{d}(q)$ irreducibles over $\F_q$, provided $q$ and $d$ are coprime.

As each irreducible factor of $x^n - 1$ over ${\F }_q$ corresponds to a cyclic code of
length $n$ over $\F_q$ (see, for example, the books of van Lidl and Niederreiter \cite{LidlNiederreiter1, LidlNiederreiter2}), determining all irreducible factors of $x^n - 1$
over $\F_q$ is of paramount importance. Indeed, many papers are devoted to this factorization. A fairly
general result in this direction can be found in Brochero-Martinez et al.\,\cite{BGO}. For more on the connection with coding theory, see for example \cite{BJ,BJU}.

For a polynomial $g(x)=c_0+c_{1}x+\dotsm+c_{n-1}x^{n-1} \in \F_{q}[x]$, we set $$\bar{g}(x)=c_{n-1}+c_{n-2}x+\dotsm+c_{0}x^{n-1}.$$ Note that $\bar{g}(x)=x^{n-1} g(\tfrac{1}{x})$ and $\overline{(\bar{g}(x))}=g(x)$. Then the reciprocal polynomial of $g(x)$ is $g^{*}(x)=\bar{g}(x)/g(0)$. In particular, if a polynomial is equal to its reciprocal polynomial over $\F_{q}$, then it is called \emph{self-reciprocal} over $\F_{q}$.

\vspace{1mm}
By the uniqueness of the factorization of a polynomial into irreducible factors, we can express equation \eqref{irred fact of cyclo eq} in the form  
\[
x^{n} - 1 = \delta \cdot g_{1}(x) \dotsm g_{s}(x) \cdot h_{1}(x) \cdot h_{1}^{*}(x) \dotsm h_{r}(x) \cdot h_{r}^{*}(x),
\]  
where $\delta \in \F_q^{\times}$, the polynomials $g_1, \dotsc, g_s$ are self-reciprocal irreducible polynomials, and the pairs $h_i(x), h_i^*(x)$ consist of the remaining irreducible factors, grouped together with their reciprocal polynomials.

\begin{prop}\label{reciprocal poly prop}
Let $n\geq 3$ be odd. Given a  factorization of $x^n-1$ into monic irreducible
polynomials over $\F_2$, one can compute
$\mc{O}(n)$ as
\[
\mc{O}(n)=i_{2}^{\mathrm{odd}}(n)+S_2(n),
\]
where $i_{2}^{\mathrm{odd}}(n)$ is the number of unordered pairs
$\{f,f^*\}$ of distinct reciprocal irreducible factors with
$\deg f$ odd, and $S_2(n)$ is the number of self-reciprocal
irreducible factors $f$ satisfying
\[
\deg f\equiv 2\pmod{4}.
\]
In particular, the factor $x-1$ contributes to neither term.
\end{prop}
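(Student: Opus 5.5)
We will use Proposition~\ref{cosetfactorization} to turn the factorization of $x^n-1$ over $\F_2$ into a statement about $2$-cyclotomic cosets modulo $n$, and then compare these cosets with the cycles of $G_n$. Each monic irreducible factor $f$ corresponds to a coset $C_i$ with $\deg f=|C_i|$. Since the roots of $f^*$ are the inverses of the roots of $f$, the factor $f^*$ corresponds to $C_{-i}$. Hence $f$ is self-reciprocal if and only if $C_i=C_{-i}$. The factor $x-1$ corresponds to $C_0=\{0\}$, which is self-reciprocal of degree $1\not\equiv 2\pmod 4$, so it contributes to neither term. We may therefore restrict to nonzero $i$, and after splitting by divisors as in Lemma~\ref{noofirrfactors}, work inside a fixed $A_d$ with $d>1$.

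The key step is to match cycles of $G_n$ with cosets. A cycle of $G_n$ through $[i]$ is the orbit of $[i]\in(\Z_n\setminus\{0\})/\{\pm1\}$ under $[a]\mapsto[2a]$. Its preimage in $\Z_n\setminus\{0\}$ is $C_i\cup C_{-i}$. There are two cases.
\begin{enumerate}
\item If $C_i\neq C_{-i}$, the two cosets are disjoint and both have size $\ord_d(2)$. The projection $C_i\to$ cycle is then a bijection, so the cycle has length $|C_i|=\deg f$. The reciprocal pair $\{f,f^*\}$ thus corresponds to exactly one cycle of length $\deg f$.
\item If $C_i=C_{-i}$, then $-i=2^r i$ for some $r$. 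Since $n$ is odd, $i\ne -i$, and the projection $C_i\to$ cycle is two-to-one. The self-reciprocal factor therefore corresponds to one cycle of length $\deg f/2$.
\end{enumerate}
This gives a bijection between cycles of $G_n$ on one side, and reciprocal pairs together with self-reciprocal nonlinear factors on the other.

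Counting odd cycles now gives the formula. A cycle is odd exactly when either it comes from a pair with $\deg f$ odd, or it comes from a self-reciprocal $f$ with $\deg f/2$ odd. The second condition is $\deg f\equiv 2\pmod 4$. This yields $\mc{O}(n)=i_2^{\mathrm{odd}}(n)+S_2(n)$.

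The only point needing care is the two-to-one claim in case~2: that $C_i$ is closed under negation, so each class $[a]$ in the cycle has both representatives in $C_i$. This follows because $C_i$ is closed under multiplication by $2$, hence by $2^r$, which acts as $-1$ on $C_i$. One should also check that $|C_i|$ is then even, which is consistent with the degree being even. As a consistency check, the correspondence recovers Proposition~\ref{prop:ord4-formula}, since $C_i=C_{-i}$ exactly when $2^{r}\equiv-1\pmod d$ for some $r$.
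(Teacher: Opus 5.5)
Your proposal is correct and follows essentially the same route as the paper: you identify irreducible factors with $2$-cyclotomic cosets, note that $f^*$ corresponds to $C_{-i}$, and split into the case $C_i\neq C_{-i}$, which gives one cycle of length $|C_i|$, and the case $C_i=C_{-i}$, which gives one cycle of length $|C_i|/2$, with $x-1$ matching $C_0$. You also supply the injectivity and two-to-one arguments for the projection from $C_i$ onto its cycle, which the paper leaves implicit.
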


\begin{proof}
Each nonzero $2$-cyclotomic coset $C_i$ modulo $n$ corresponds to an
irreducible factor $f_i$ of $x^n-1$ over $\F_2$, with
$\deg f_i=|C_i|$, while $f_i^*$ corresponds to $C_{-i}$. If
$C_i\neq C_{-i}$, then, after identifying $i$ with $-i$, the pair
$\{C_i,C_{-i}\}$ gives one cycle of $G_n$ of length $|C_i|$; this
cycle is odd precisely when $\deg f_i$ is odd. If $C_i=C_{-i}$, then
$f_i$ is self-reciprocal and the corresponding cycle of $G_n$ has
length $|C_i|/2$; it is odd precisely when $|C_i|\equiv 2\pmod{4}$.
These two cases account for all odd cycles of $G_n$. Finally, $x-1$
corresponds to the zero cyclotomic coset and has degree $1$, so it is
not counted.
\end{proof}

\noindent
From \eqref{no of irred fact of cyclo}, it follows that
\[
i_2(n)=1+\sum_{\substack{d\, \mid \, n,\ d>1\\2\mid\ord_d(2)}}
\ind_d(2)+ \sum_{\substack{d\, \mid \, n,\ d>1\\2\, \nmid\,\ord_d(2)}}\ind_d(2),
\]
where the term \(1\) 
is the contribution from the factor \(x-1\). Let \(C_i\) be a \(2\)-cyclotomic coset modulo \(n\), and put $d=\frac{n}{\gcd(i,n)}$. Then $C_i=C_{-i}$ if and only if there exists an integer \(j\geq1\) such that $2^j\equiv-1\pmod{d}$. When \(\ord_d(2)\) is even, this is equivalent to
$2^{\ord_d(2)/2}\equiv-1\pmod{d}$. 
When \(\ord_d(2)\) is odd, the equation $2^j\equiv-1\pmod{d}$ does not have a solution.
Thus Proposition
\ref{reciprocal poly prop} gives, for every odd integer \(n\geq3\),
\[
\mc{O}(n)
=\sum_{\substack{d\, \mid \, n,\ d>1\\2\, \nmid\,\ord_d(2)}}
\ind_d(2)+\sum_{\substack{d\, \mid \, n,\ d>1\\
\ord_d(2)\equiv2\pmod{4}\\
2^{\ord_d(2)/2}\equiv-1\pmod{d}}}
\ind_d(2).
\]
This reproves Theorem \ref{thm-FLS} and Proposition \ref{prop:ord4-formula}.

\subsection{A reduction lemma and its applications} 
We next use the prime factorization of $n$ to identify the prime-power factors that do not contribute to $\mc{O}(n)$. To do so, we 
require a trivial lemma. 
\begin{lem}
\label{lem:CRTord}
Let $d$ be odd and $p_1,\dotsc,p_k$ its prime divisors. Then
$$\ord_d(2^a)\equiv \lcm(\ord_{p_1}(2^a),\dotsc,\ord_{p_k}(2^a))\pmod{2},$$
where $a$ is any positive integer.
\end{lem}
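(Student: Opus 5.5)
The plan is to reduce everything to the Chinese Remainder Theorem and an elementary parity fact about least common multiples. Write $d=\prod_{i=1}^k p_i^{e_i}$. By the Chinese Remainder Theorem, $(\Z_d)^\times\cong\prod_i(\Z_{p_i^{e_i}})^\times$, and the order of an element in a direct product is the lcm of the orders of its components. Hence
\[
\ord_d(2^a)=\lcm\bigl(\ord_{p_1^{e_1}}(2^a),\dotsc,\ord_{p_k^{e_k}}(2^a)\bigr).
\]

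Next, compare each prime-power order with the order modulo the prime itself. Put $u=\ord_{p}(2^a)$ and $v=\ord_{p^{e}}(2^a)$. Since $p\mid p^e$, we have $u\mid v$. Conversely, $2^{au}=1+p t$ for some integer $t$, and raising this to the power $p^{e-1}$ gives $2^{aup^{e-1}}\equiv1\pmod{p^e}$, by the binomial theorem or the standard lifting argument. Therefore $v\mid u\,p^{e-1}$, so $v=u\cdot p^{j}$ for some $j\ge0$. Because $p$ is odd, this gives $v\equiv u\pmod 2$; more precisely, $v$ and $u$ have the same $2$-adic valuation.

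Finally, the parity of an lcm depends only on whether some argument is even: $\lcm(x_1,\dots,x_k)$ is even if and only if some $x_i$ is even. More precisely, $\nu_2(\lcm)=\max_i\nu_2(x_i)$, so the $2$-adic valuation of the lcm is unchanged when each $x_i$ is replaced by a number with the same valuation. Applying this with $x_i=\ord_{p_i^{e_i}}(2^a)$ and the replacements $\ord_{p_i}(2^a)$ yields the stated congruence modulo $2$. In fact it yields the stronger statement that the two sides have equal $2$-adic valuation.

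No step is a real obstacle. The only point needing a line of care is the lifting $v\mid up^{e-1}$; it is routine, since $(1+pt)^{p}\equiv1\pmod{p^2}$ and so on inductively.
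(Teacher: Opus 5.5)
Your proposal is correct and is essentially the paper's proof: the Chinese Remainder Theorem writes $\ord_d(2^a)$ as the lcm of the prime-power orders $\ord_{p_i^{e_i}}(2^a)$, and since $\ord_{p^e}(2^a)$ is a multiple of $\ord_p(2^a)$ that divides $p^{e-1}\ord_p(2^a)$, each prime-power order differs from $\ord_p(2^a)$ by an odd factor. You additionally spell out the lifting step and the identity $\nu_2(\lcm(x_1,\dots,x_k))=\max_i\nu_2(x_i)$, both of which the paper leaves implicit.
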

\begin{proof}
Note that if $d=p_1^{e_1}\cdots p_k^{e_k}$, then  
$\ord_d(2^a)=\lcm(\ord_{p_1^{e_1}}(2^a),\dotsc,\ord_{p_k^{e_k}}(2^a))$
by the Chinese Remainder Theorem. The proof is completed on noting that $\ord_{p^k}(2^a)$ divides $p^{k-1}\ord_p(2^a)$.
\end{proof}

Proposition \ref{prop:ord4-formula}  together with Lemma \ref{lem:CRTord} shows that prime factors $p$ for which $\ord_p(4)$ is even do not contribute to $\mc{O}(n)$. This gives the following useful reduction.
\begin{lem}\label{lem-even}
Let $n\ge 3$ be an odd integer and $m$ its largest divisor having only
prime factors $p$ for which $\ord_p(4)$ is odd. Then
$\mc{O}(n)=\mc{O}(m)$.
Moreover, $\mc{O}(n)=0$ if and only if $m=1$, that is if and only if $\ord_p(4)$ is even for every prime divisor $p$ of
$n$.
\end{lem}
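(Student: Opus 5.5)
The plan is to work directly from the divisor-sum formula of Proposition~\ref{prop:ord4-formula}. There, a divisor $d>1$ of $n$ contributes only if $\ord_d(4)$ is odd. The first step is to show that any divisor $d$ of $n$ having a prime factor $p$ with $\ord_p(4)$ even contributes nothing.

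By Lemma~\ref{lem:CRTord} with $a=2$, the parity of $\ord_d(4)$ equals the parity of $\lcm(\ord_{p_1}(4),\dotsc,\ord_{p_k}(4))$, where $p_1,\dotsc,p_k$ are the primes dividing $d$. This $\lcm$ is odd exactly when every $\ord_{p_i}(4)$ is odd. Hence, if some prime $p\mid d$ has $\ord_p(4)$ even, then $\ord_d(4)$ is even and the term for $d$ is absent from the sum.

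It follows that the only divisors $d>1$ of $n$ that can contribute are those all of whose prime factors $p$ satisfy $\ord_p(4)$ odd. Such a divisor $d$ divides $n$ and has only these primes as factors, so by maximality of $m$ it divides $m$. Conversely, every divisor of $m$ is a divisor of $n$. The two divisor sums defining $\mc{O}(n)$ and $\mc{O}(m)$ via \eqref{O(n) defn interms ord of 4} therefore run over the same contributing $d$, and the summand and the conditions on it depend only on $d$, not on $n$ or $m$. This gives $\mc{O}(n)=\mc{O}(m)$. If $m=1$, both sums are empty, and this agrees with the convention $\mc{O}(1)=0$; Proposition~\ref{prop:ord4-formula} requires $n\ge3$, so the case $m=1$ is handled by the convention.

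For the ``moreover'' part, $m=1$ gives $\mc{O}(n)=0$ immediately. If $m>1$, pick a prime $p\mid m$. Then $\ord_p(4)$ is odd, and by Remark~\ref{firstremark} the congruence condition $2^{\ord_p(4)}\equiv\pm1\pmod p$ is automatic, since $\bigl(2^{\ord_p(4)}\bigr)^2\equiv1\pmod p$. So the term $d=p$ contributes $\varphi(p)/(2\ord_p(4))>0$. All terms in the sum are nonnegative, so $\mc{O}(n)\ge \mc{O}(p)>0$.

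The main obstacle is minor: one has to be careful that the congruence condition for composite $d$ is inherited identically in both sums, which holds because the condition depends only on $d$. There is also the edge case $m=1$, where Proposition~\ref{prop:ord4-formula} does not literally apply and the convention $\mc{O}(1)=0$ is needed.
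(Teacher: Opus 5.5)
Your proof is correct and follows the paper's route: the paper derives the lemma from the divisor-sum formula of Proposition~\ref{prop:ord4-formula} together with Lemma~\ref{lem:CRTord}, which removes every divisor $d$ containing a prime $p$ with $\ord_p(4)$ even. You have supplied the details the paper leaves implicit: the identification of the surviving divisors with divisors of $m$, the case $m=1$, and the positivity of the $d=p$ term. As a minor simplification, your first step needs only $\ord_p(4)\mid\ord_d(4)$ for $p\mid d$, rather than the full parity statement of Lemma~\ref{lem:CRTord}.
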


As a consequence of Lemma \ref{lem-even}, we recover the following characterization of Wu and Fu~\cite{WuFu}.
\begin{thm}[Theorem 1.1 of \cite{WuFu}]
Let $k\geq 2$.
\begin{enumerate}
    \item A tight equi-difference conflict-avoiding code of length $2^k-1$ and weight $3$ exists if and only if $k$ is a power of $2$.
    \item A tight equi-difference conflict-avoiding code of length $2^k+1$ and weight $3$ exists if and only if $k$ is even.
\end{enumerate}
In either case, whenever such a code exists, it has $2^{k-2}$ codewords. The exceptional case $k=1$ in the second family gives the tight code of length $3$.
\end{thm}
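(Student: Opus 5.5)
We will deduce the theorem from Lemma~\ref{lem-even} together with Proposition~\ref{tight equi-diff prop}. The first step is to pin down $\ord_p(4)$ for the relevant primes. Let $n=2^k\mp1$. Then $2^k\equiv\pm1\pmod n$, so $4^k\equiv1\pmod n$. Hence for every prime $p\mid n$ we have $\ord_p(4)\mid k$.

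\textbf{Case $n=2^k+1$.} Here $3\mid n$ exactly when $k$ is odd.

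Suppose first that $k$ is even. Then $3\nmid n$. For a prime $p\mid n$ we have $2^k\equiv-1\pmod p$, so $\ord_p(2)\mid 2k$ but $\ord_p(2)\nmid k$. Hence $\nu_2(\ord_p(2))=\nu_2(k)+1\ge2$, which gives $4\mid\ord_p(2)$. Therefore $\ord_p(4)=\ord_p(2)/2$ is even. By Lemma~\ref{lem-even}, $\mc{O}(n)=0$, and Proposition~\ref{tight equi-diff prop} gives existence.

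Suppose next that $k$ is odd. The prime $3$ divides $n$ and has $\ord_3(4)=1$, which is odd. In Proposition~\ref{tight equi-diff prop} we need $\mc{O}(n)=1$, so we must show $\mc{O}(n)\ge 2$ for $k\ge 3$. We use formula~\eqref{O(n) defn interms ord of 4} with $d=n$. Since $2^k\equiv-1\pmod n$, the order $\ord_n(2)$ equals $2k$, so $\ord_n(4)=k$, which is odd, and $2^k\equiv -1\pmod n$. The term for $d=n$ is therefore $\varphi(n)/(2k)\ge1$. The term for $d=3$ contributes $\varphi(3)/2=1$. Since $n\neq3$ when $k\geq 3$, these are distinct terms, so $\mc{O}(n)\ge2$ and no tight code exists. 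For $k=1$ we have $n=3$ and $\mc{O}(3)=1$, which is the exceptional tight code.

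\textbf{Case $n=2^k-1$.} Here $3\mid n$ exactly when $k$ is even.

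Suppose first that $k=2^j$ with $j\ge1$. Factor
\[
2^{2^j}-1=3\cdot\prod_{i=1}^{j-1}(2^{2^i}+1).
\]
The factors $2^{2^i}+1$ with $i\ge1$ have even exponent $2^i$, so by the argument of the even case above every prime dividing them has $\ord_p(4)$ even. The factor $3$ divides $n$ exactly once: for $i\ge1$ we have $2^{2^i}+1\equiv 2\pmod 3$, so $3$ does not divide the other factors. By Lemma~\ref{lem-even}, $\mc{O}(n)=\mc{O}(3)=1$, and Proposition~\ref{tight equi-diff prop} gives existence.

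Suppose next that $k$ is not a power of $2$. Then $k$ has an odd divisor $t>1$, and we take $d=2^t-1$, a divisor of $n$. Since $2^t\equiv1\pmod d$ we get $\ord_d(2)=t$, which is odd, so $\ord_d(4)=t$ as well. The term for $d$ in~\eqref{O(n) defn interms ord of 4} is therefore positive. Because $d\ge7$, we have $d\ne3$. If $3\nmid n$, then $\mc{O}(n)\ge1$ already rules out a tight code. If $3\mid n$, the term for $d=3$ also contributes, so $\mc{O}(n)\ge2$. In either case Proposition~\ref{tight equi-diff prop} excludes a tight code.

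\textbf{Number of codewords.} A tight code covers all $(n-1)/2$ vertices. Each equi-difference codeword other than the special one accounts for $2$ vertices, and the loop, present when $3\mid n$, accounts for $1$. This gives $(n-1)/4$ codewords if $3\nmid n$, and $(n-3)/4+1=(n+1)/4$ if $3\mid n$.

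The existing cases are $n=2^k+1$ with $k$ even, where $3\nmid n$, and $n=2^k-1$ with $k=2^j$, where $3\mid n$. Both expressions equal $2^{k-2}$.

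The main obstacle is the multiplicative-order bookkeeping, in particular the $2$-adic valuation argument for primes dividing $2^k+1$. Also essential is that the distinct odd-cycle contributions in~\eqref{O(n) defn interms ord of 4} come from distinct divisors $d$.
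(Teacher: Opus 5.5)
Your proof is correct. The existence directions and the codeword count follow the paper's reasoning closely. For $2^{2^j}-1$ you go through the Fermat-number factorization, where the paper instead observes $\ord_p(2)\mid 2^s$ with $\ord_p(2)\neq 1,2$. Your factorization also justifies $\nu_3(n)=1$, which the paper only asserts.

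The genuine difference lies in the non-existence directions. The paper works with prime divisors, for which the congruence condition in \eqref{O(n) defn interms ord of 4} is automatic (Remark~\ref{firstremark}). For $2^k-1$ it extracts a prime $p\equiv 3\pmod 4$ dividing $2^m-1$, where $m$ is the odd part of $k$, so that $\ord_p(4)\mid (p-1)/2$ is odd. For $2^k+1$ with $k$ odd it uses a prime $p\neq 3$ dividing $n$, and needs a separate case $n=3^\ell$, handled via $d=9$.

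You instead feed composite divisors directly into \eqref{O(n) defn interms ord of 4}, namely $d=2^t-1$ with $t>1$ an odd divisor of $k$, and $d=n=2^k+1$ itself. Because $d$ has the shape $2^s\mp 1$, the condition $2^{\ord_d(4)}\equiv\pm1\pmod d$ holds by construction. So no prime divisor has to be located, and the $3^\ell$ case disappears.

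The only cost is that you use $\ord_{2^t-1}(2)=t$ and $\ord_{2^k+1}(2)=2k$ without proof. You should add the one-line size argument: $0<2^j-1<2^t-1$ for $1\le j<t$; and $0<2^j-1<2^k+1$ for $1\le j\le k$, so $\ord_n(2)>k$, while $\ord_n(2)\mid 2k$ forces $\ord_n(2)=2k$.

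The paper's prime-by-prime approach, in turn, meshes directly with Lemma~\ref{lem-even}. It also yields the extra information that every prime divisor of $2^k+1$ with $k$ odd has odd $\ord_p(4)$.
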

\begin{proof}
Suppose first that $k=2^s$, with $s\geq 1$, and put $n=2^k-1$. Then $3\mid n$, and, in fact, $\nu_3(n)=1$. If $p\mid n$ and $p\neq 3$, then $\ord_p(2)\mid k=2^s$. Since $\ord_p(2)\neq 1,2$, it follows that $4\mid \ord_p(2)$, and hence $\ord_p(4)$ is even. Thus, Lemma~\ref{lem-even} gives $\mc{O}(n)=\mc{O}(3)=1$, so Proposition~\ref{tight equi-diff prop} yields a tight equi-difference code.

Conversely, suppose that $k$ is not a power of $2$. If $k>1$ is odd, then $3\nmid 2^k-1$ and $2^k-1\equiv 3\pmod{4}$. Hence $2^k-1$ has a prime divisor $p\equiv 3\pmod{4}$. Since $\ord_p(4)\mid \frac{p-1}{2}$, and thus $\ord_p(4)$ is odd. By Lemma~3, $\mc{O}(2^k-1)>0$. If $k=2^s m$ with $s\geq1$ and $m>1$ odd, then $2^m-1\mid 2^k-1$, while $2^m-1\equiv3\pmod{4}$ and $3\nmid2^m-1$. Thus there is a prime $p\neq3$, $p\mid2^k-1$, with $\ord_p(4)$ odd. Since $3\mid2^k-1$, the divisors $3$ and $p$ both give positive contributions in~\eqref{O(n) defn interms ord of 4}, so $\mc{O}(2^k-1)>1$.

Now put $n=2^k+1$. Write $k=2^s m$ with $m$ odd. For any prime $p\mid n$, if $r=\ord_p(2)$, then $r\mid 2k,\ r\nmid k$. If $k$ is even, then $s\geq1$, and these two conditions force $\nu_2(r)=s+1$. Hence $\ord_p(4)=r/2$ is even for every $p\mid n$. Lemma~\ref{lem-even} gives $\mc{O}(n)=0$, and since $3\nmid n$, Proposition~\ref{tight equi-diff prop} gives a tight code.

If $k>1$ is odd, then $\nu_2(r)=1$ for every $p\mid n$, so $\ord_p(4)$ is odd. Moreover $3\mid n$. If $n$ has a prime divisor $p\neq3$, then the divisors $3$ and $p$ both contribute positively
to~\eqref{O(n) defn interms ord of 4}, and hence $\mc{O}(n)>1$. If $n=3^\ell$, then $\ell\ge2$. In this case, the divisors $3$ and $9$ both contribute positively to \eqref{O(n) defn interms ord of 4}, since $\ord_3(4)=1$,  $2\equiv-1\pmod3$, and $\ord_9(4)=3$, $2^3\equiv-1\pmod9$. Thus again $\mc{O}(n)>1$.

Finally, count the nonzero differences covered by the code. For $n=2^k+1$, every codeword contributes four differences, so $4|\mathcal{C}|=n-1=2^k$. For $n=2^k-1$, there is one exceptional codeword contributing only two differences, while every other codeword contributes four. Hence $2+4\bigl(|\mathcal{C}|-1\bigr)=n-1=2^k-2$. Thus, in either case,
$|\mathcal C|=2^{k-2}$.
\end{proof}

\subsection{Counting odd-length orbits via the Cauchy--Frobenius identity}
Let $a > 1$ and $m \geq 1$ be integers such that $\gcd(a, m) = 1$. On the additive group $\Z_m$ 
we define the automorphism $\alpha \in \mathrm{Aut}(\Z_m)$ by $\alpha(x) = a x$ for all $x \in \Z_m$. Let $n  \coloneq \ord_m(a)$ denote the multiplicative order of $a$ modulo $m$, so that $\langle \alpha \rangle \cong C_n$ is a cyclic group of automorphisms of $\Z_m$ of order $n$.

We let $G  \coloneq  \langle \alpha \rangle$ act on $\Z_m$ in the natural way: for $x \in \Z_m$ and $k \in \Z$,
\[
\alpha^k \cdot x  \coloneq \alpha^k(x) =  a^k x.
\]
This action partitions $\Z_m$ into $G$-orbits. Reasoning as in the proof of Lemma \ref{noofirrfactors} one sees that
\[
i_a(m) = \sum_{d \mid m} \ind_d(a).
\]
It is also easy to  see that the number of odd orbits is equal to \[
i_a^{odd}(m) = \sum_{d \mid m ,\ \ord_d(a)\ \textrm{is odd}} \ind_d(a).
\]

Having expressed $i_a^{\mathrm{odd}}(m)$ explicitly for a general multiplier $a$, we can specialize to $a=2$ and relate this quantity to $\mc{O}(m)$, the two invariants agree exactly outside a correction term $S_2(m)$ accounting for self-reciprocal polynomials (see Proposition \ref{reciprocal poly prop}).
The total number $i_a(m)$ of orbits of the multiplication map
$x\mapsto ax$ modulo $m$ is a classical quantity; for a recent discussion of this
orbit-counting problem, its relation with the doubling map, and several
of its applications, see Allouche, Stipulanti and Yao~\cite{ASY}.
Here we require a refinement in which only the orbits of odd length are
counted. In fact, if $i_a^{\mathrm{odd}}(m)$ denotes their number, then
\[
    i_a^{\mathrm{odd}}(m)=2i_a(m)-i_{a^2}(m),
\]
as follows by comparing the actions of $\langle \alpha \rangle$ and
$\langle \alpha^2\rangle$. Applying the Cauchy--Frobenius identity gives the
following explicit formula for it.
\begin{thm}
\label{thm:Cauchy-Frobenius}
Let $a > 1$ and $m \geq 1$ with $\gcd(a, m) = 1$. Then the number of $G$-orbits of odd length in the action of $G = \langle \alpha \rangle$, where $\alpha(x) = ax$, on the additive group $\Z_m$, is given by
\[
 i_a^{\mathrm{odd}}(m) =\frac{\kappa_a(m)}{\ord_m(a)} \Bigl( \sum_{\substack{d \mid \ord_m(a) \\ d\ \textrm{is odd}}} \varphi\Bigl(\frac{\ord_m(a)}{d}\Bigr)(m, a^d - 1) \Bigr),
\]
where $\kappa_a(m) =1$ if $\ord_m(a)$ is odd and $\kappa_a(m) =2$ otherwise.
\end{thm}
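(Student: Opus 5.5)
The plan is to combine the identity $i_a^{\mathrm{odd}}(m)=2i_a(m)-i_{a^2}(m)$ with two applications of the Cauchy--Frobenius (Burnside) lemma. Write $n=\ord_m(a)$ and $G=\langle\alpha\rangle\cong C_n$.

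First I justify the identity. An orbit of $G$ of length $\ell$ splits into $\gcd(\ell,2)$ orbits of $\langle\alpha^2\rangle$: if $\ell$ is odd, $\alpha^2$ generates the same cyclic permutation group on that orbit, and if $\ell$ is even, it splits the orbit into two. Hence
\[
i_{a^2}(m)=i_a^{\mathrm{odd}}(m)+2\bigl(i_a(m)-i_a^{\mathrm{odd}}(m)\bigr),
\]
which rearranges to the identity.

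Next I count fixed points. The number of fixed points of $\alpha^k$ on $\Z_m$ is the number of solutions of $(a^k-1)x\equiv0\pmod m$, namely $(m,a^k-1)$. This depends only on $\gcd(k,n)$, since $a^k$ and $a^{\gcd(k,n)}$ generate the same subgroup of $\Z_m^\times$ and so have the same kernel of $x\mapsto(a^k-1)x$. Grouping the $k\in\{0,\dots,n-1\}$ by $d=\gcd(k,n)$, there are $\varphi(n/d)$ of each, so Burnside gives
\[
i_a(m)=\frac1n\sum_{d\mid n}\varphi(n/d)(m,a^d-1).
\]

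Now the two cases.

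\emph{Case $n$ odd.} Then $\langle\alpha^2\rangle=G$, so $i_{a^2}(m)=i_a(m)$ and $i_a^{\mathrm{odd}}(m)=i_a(m)$. Every divisor $d$ of $n$ is odd, so the Burnside formula is exactly the claimed expression with $\kappa_a(m)=1$.

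\emph{Case $n$ even.} The group $\langle\alpha^2\rangle$ has order $n/2$, and $\ord_m(a^2)=n/2$. Applying Burnside again,
\[
i_{a^2}(m)=\frac{2}{n}\sum_{e\mid n/2}\varphi\Bigl(\frac{n/2}{e}\Bigr)(m,a^{2e}-1).
\]
Substitute $d=2e$, which runs over the even divisors of $n$, and note $\frac{n/2}{e}=\frac nd$. This gives
\[
i_{a^2}(m)=\frac2n\sum_{d\mid n,\ 2\mid d}\varphi(n/d)(m,a^d-1).
\]
Meanwhile
\[
2i_a(m)=\frac2n\sum_{d\mid n}\varphi(n/d)(m,a^d-1).
\]
Subtracting, the even-$d$ terms cancel exactly, leaving
\[
\frac2n\sum_{d\mid n,\ d\text{ odd}}\varphi(n/d)(m,a^d-1),
\]
which is the claim with $\kappa_a(m)=2$.

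The only delicate points are the index bookkeeping in the substitution $d=2e$ and the verification that $\ord_m(a^2)=n/2$ when $n$ is even. Both are routine, so I expect no serious obstacle. The orbit-splitting argument for the identity is the one place that needs a sentence of care, and that sentence is given above.
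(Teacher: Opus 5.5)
Your proposal is correct and follows the paper's proof essentially step for step: the splitting of $G$-orbits under $H=\langle\alpha^2\rangle$ gives $i_a^{\mathrm{odd}}(m)=2i_a(m)-i_{a^2}(m)$, and applying the orbit-count formula to both $a$ and $a^2$ makes the even-$d$ terms cancel. The only difference is that you derive that orbit-count formula directly from Cauchy--Frobenius, by grouping the exponents $k$ according to $\gcd(k,n)$, whereas the paper quotes it as Deaconescu's identity.
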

\begin{proof}
To isolate the contribution of \emph{odd-length orbits}, we consider the subgroup $H  \coloneq  \langle \alpha^2 \rangle \leq G$, which consists of the even powers of $\alpha$. This subgroup acts on $\Z_m$ via
\[
\alpha^{2k} \cdot x  \coloneq  a^{2k} x.
\]
We observe that every $G$-orbit of odd length remains a single $H$-orbit, while every $G$-orbit of even length splits into two $H$-orbits of equal size.
Let $i_a^{\mathrm{even}}(m)$ denote the number of $G$-orbits of even length. Then the number of $H$-orbits is given by 
\[
|H\setminus \Z_m| = i_a^{\mathrm{odd}}(m) + 2 i_a^{\mathrm{even}}(m).
\]
Solving for $i^{odd}_{a}(m)$, we deduce that
\[  i_a^{\mathrm{odd}}(m) = 2 \times |G\setminus \Z_m| - |H\setminus \Z_m|. \]
If $\ord_m(a)$ is odd, then $i_a^{\mathrm{odd}}(m)= |G\setminus \Z_m|$. 
If $\ord_m(a)$ is  even, then by Deaconescu’s identity \cite{Deaconescu}, we have

\begin{align*}
    	i_a^{\mathrm{odd}}(m) &=	 \frac{2}{\ord_m(a)} \Bigl( \sum_{\substack{d \mid \ord_m(a) }}\!\! \varphi\Bigl(\frac{\ord_m(a)}{d}\Bigr)(m, a^d - 1) -
    	\!\!\sum_{\substack{d \mid  \frac{\ord_m(a) }{2}}} \varphi\Bigl(\frac{\ord_m(a)}{2d}\Bigr)(m, a^{2d} - 1)\Bigr)\\
    	&=\frac{2}{\ord_m(a)} \Bigl( \sum_{\substack{d \mid \ord_m(a) \\ d\  \textrm{is odd}}} \varphi\Bigl(\frac{\ord_m(a)}{d}\Bigr)(m, a^d - 1) \Bigr).
    	\tag*{\qed}
\end{align*}
\renewcommand{\qed}{}
\end{proof}

This closed form is recorded below as a corollary, expressing $\mc{O}(n)$ solely in terms of $\ord_n(2)$ and its divisors, in the same computational spirit as Deaconescu's identity used earlier for $i_2(n)$.

\begin{cor}\label{cor:On-explicit}
For every odd integer $n>1$,
\[
\mc{O}(n) \;=\; \frac{1}{2}\bigg[\,\frac{x_2(n)}{\ord_n(2)} \sum_{\substack{d \,\mid\, \ord_n(2) \\ d\ \mathrm{odd}}} \varphi \Bigl(\frac{\ord_n(2)}{d}\Bigr)(n, 2^d - 1) \;-\; 1\,\bigg] + S_2(n),
\]
where $S_2(n)$ is defined as in Proposition \ref{reciprocal poly prop}.
\end{cor}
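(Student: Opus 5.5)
The corollary combines Theorem~\ref{thm:Cauchy-Frobenius} with $a=2$, $m=n$ and Proposition~\ref{reciprocal poly prop}. The link is the bracketed expression. By Theorem~\ref{thm:Cauchy-Frobenius}, with $\kappa_2(n)$ playing the role of the factor denoted $x_2(n)$ (so $x_2(n)=1$ if $\ord_n(2)$ is odd and $2$ otherwise), this expression equals $i_2^{\mathrm{odd}}(n)$. Here $i_2^{\mathrm{odd}}(n)$ is the number of orbits of odd length of $x\mapsto 2x$ on $\Z_n$, i.e.\ the number of $2$-cyclotomic cosets $C_i$ modulo $n$ with $|C_i|$ odd. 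It therefore suffices to prove
\[
2\,\mc{O}(n)-2S_2(n)=i_2^{\mathrm{odd}}(n)-1,
\]
that is, $i_2^{\mathrm{odd}}(n)=1+2\,i_2^{\mathrm{odd-pairs}}(n)$, where by Proposition~\ref{reciprocal poly prop} the quantity $\mc{O}(n)-S_2(n)$ is the number of unordered reciprocal pairs $\{f,f^*\}$, $f\neq f^*$, of odd degree.

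First I sort the cosets of odd cardinality. The zero coset $C_0=\{0\}$ has size $1$ and is the only one corresponding to $x-1$; it accounts for the $-1$ inside the bracket. Next I show that a nonzero coset $C_i$ of odd size is never self-reciprocal. If $C_i=C_{-i}$, then with $d=n/\gcd(i,n)>1$ we have $2^j\equiv -1\pmod d$ for some $j$. This forces $\ord_d(2)$ to be even, and $|C_i|=\ord_d(2)$ by the argument in Lemma~\ref{noofirrfactors}, a contradiction. This is exactly the observation made after Proposition~\ref{reciprocal poly prop}, that $2^j\equiv-1\pmod d$ has no solution when $\ord_d(2)$ is odd.

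It follows that the nonzero odd-size cosets come in distinct pairs $\{C_i,C_{-i}\}$ of equal size. These correspond bijectively to the unordered pairs $\{f,f^*\}$ of distinct reciprocal irreducible factors of odd degree, so their number is $2\,i_2^{\mathrm{odd}}$-pairs. Hence $i_2^{\mathrm{odd}}(n)-1$ is twice the first term in Proposition~\ref{reciprocal poly prop}. Halving and adding $S_2(n)$ gives $\mc{O}(n)$, which is the stated formula.

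The only delicate points are bookkeeping. One must check that the normalizing factor in front of the sum matches $\kappa_2(n)/\ord_n(2)$ from Theorem~\ref{thm:Cauchy-Frobenius}, and that $(n,2^d-1)$ is the fixed-point count it uses. One must also confirm that the zero coset is counted in $i_2^{\mathrm{odd}}(n)$ but excluded from $\mc{O}(n)$. The main obstacle is the non-self-reciprocity of odd cosets, and the parity argument above handles it.
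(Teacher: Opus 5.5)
Your proposal is correct and is essentially the argument the paper leaves implicit: specialize Theorem~\ref{thm:Cauchy-Frobenius} to $a=2$, $m=n$ (reading $x_2(n)$ as $\kappa_2(n)$) to count the odd-size $2$-cyclotomic cosets modulo $n$, and then apply Proposition~\ref{reciprocal poly prop}. Your parity argument shows that no nonzero odd-size coset is self-reciprocal, so this count minus the zero coset is exactly twice the number of odd-degree reciprocal pairs; this is precisely the step that produces the factor $\frac12$ and the $-1$ in the bracket.
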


\section{Applying techniques used in the study of the multiplicative order: initial considerations}\label{initial consid sec}
The description of $\mc{O}(p)$ obtained in Corollary~\ref{Opequivalence} naturally leads to questions concerning primes for which $4$ has prescribed multiplicative order. We therefore recall some results from the theory of near-primitive roots that will be used in the proof of Theorem~\ref{main} and later in our study of the level sets of $\mc{O}(p)$ in arithmetic progressions.

Given a rational number $g$ and a prime $p$ such that $\nu_p(g)=0$, we denote by $\ord_p(g)$ the \emph{multiplicative order}
of $g$ modulo $p$ as defined in Definition~\ref{mult ord def}.
Given an integer $t > 1$, we say 
that $g$ is a \emph{near-primitive root of index $t$} if $\nu_p(g) = 0$ and $\ord_p(g)=(p-1)/t$.
Notice that this entails that $p\equiv 1\pmod{t}$.
Wagstaff \cite{Wagstaff} showed that, assuming GRH,
this set of primes has density
\begin{equation}
\label{classicalwagstaff} 
\delta_g(t)=\sum_{n=1}^\infty
\frac{\mu(n)}{[\Q(\zeta_{nt},g^{1/nt}) : \Q]},
\end{equation} 
where $\zeta_m=e^{2\pi i/m}$.
Actually, for the proof of \eqref{classicalwagstaff} it is enough to assume that the Dedekind zeta-functions of all fields appearing in \eqref{classicalwagstaff} with $n$ squarefree all have all their non-trivial zeros
on the half-line. The proof is an easy generalization of a proof of Hooley \cite{Hooley1}, who dealt with \emph{primitive roots} ($t=1$ case) and evaluated
$\delta_g(1)$.
Wagstaff evaluated $\delta_g(t)$ explicitly in the 
form $\rho(g)A$, with $\rho(g)$ a rational number.
He excluded the case where $g$ is minus a square, which was later 
dealt with by Moree \cite{Mnear}, whose result is in complete agreement with the determination of
this density using a very different, much more algebraic, approach, see 
Lenstra et al.\,\cite{LMS}.

\par Given two coprime positive integers $a$ and $d,$ we put 
\[
\label{cqda}
Q(d,a)=\{p>2 : \ p\equiv a \pmod{d},\ \ord_p(4)=(p-1)/2\}.
\]
These sets provide useful concrete examples of the general near-primitive-root density formula. They will also play a role in Section~\ref{Mult ord sec} since the condition $\mc{O}(p)=1$ is characterized by $\ord_p(4)=(p-1)/2$, together with the requirement that this order be odd.

The order condition defining $Q(d,a)$ also occurs naturally in the study
of digit expansions (cf.\,Remark \ref{rem:Girstmair}). If $p\in Q(d,a)$, then the base-$4$ expansion of
$1/p$ has period length $(p-1)/2$. Girstmair \cite{G3} studies the digit frequencies of the
base-$b$ expansion of $1/p$ precisely under the hypothesis
\[
  \ord_p(b)=\frac{p-1}{2},
\]
expressing these frequencies in terms of generalized Bernoulli numbers and determining when they reduce to class numbers of imaginary quadratic fields. In particular, $b=4$ belongs to his class-number cases. Thus the
primes counted by $Q(d,a)$ also parametrize a family of base-$4$ expansions whose digit frequencies admit explicit arithmetic descriptions.

The density $\delta(Q(d,a))$ of these sets was determined under GRH in 
\cite[Prop.\,1.12]{HKMS}. Before applying these ideas to the general level sets of $\mc{O}(p)$, we discuss two elementary instances of the density calculation, namely
$d=1$ and $d=8$.
For this, and our proof of 
Theorem \ref{main}, we need a small variation of Wagstaff's result, namely that 
given a number field $K$, under GRH, the set
$$\bigl\{p\textrm{~splits~completely~in~}K\textrm{~and~}
\ord_p(g)=(p-1)/t\bigr\}$$
has density 
\begin{equation}
\label{wagstaffK} 
\delta_{K,g}(t)=\sum_{n=1}^\infty
\frac{\mu(n)}{[K(\zeta_{nt},g^{1/nt}) : \Q]}.
\end{equation} 
\par By Wagstaff's work \cite{Wagstaff}, under GRH, we have $\delta(Q(1,1))=\delta_4(2)=3A/2$. It follows by \eqref{wagstaffK} with $K=\Q$, $g=4$ and $t=2$ (or \eqref{classicalwagstaff} with
$g=4$ and $t=2$), that 
\begin{equation}
\label{startup}
\delta(Q(1,1))=\sum_{n=1}^\infty
\frac{\mu(n)}{[\Q(\zeta_{2n},4^{1/2n}) : \Q]}=
\sum_{n=1}^\infty\frac{\mu(n)}{[\Q(\zeta_{2n},2^{1/n}) : \Q]}.
\end{equation}
The following lemma (cf.\,\cite[Lemma 1]{MoreeI}) allows one to determine the field degrees in 
this sum and rests on the observation that $\sqrt{2}\in \Q(\zeta_n)$ if and only if $8\mid n$.
\begin{lem}
\label{lem:graad}
Let $m\mid n$. We have
\[
[\Q(\zeta_n,2^{1/m}) : \Q]=\begin{cases}
\varphi(n)m/2 &\ \textrm{if}\ 2\mid m\ \textrm{and}\ 8\mid n;\cr
\varphi(n)m & \textrm{~otherwise}.
\end{cases}
\]
\end{lem}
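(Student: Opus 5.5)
We compute the degree through the tower $\Q\subseteq\Q(\zeta_n)\subseteq\Q(\zeta_n,2^{1/m})$. Since $[\Q(\zeta_n):\Q]=\varphi(n)$, the claim is equivalent to
\[
[\Q(\zeta_n,2^{1/m}):\Q(\zeta_n)]=\begin{cases} m/2 & \text{if } 2\mid m \text{ and } 8\mid n;\\ m & \text{otherwise}.\end{cases}
\]
Because $m\mid n$, the field $\Q(\zeta_n)$ contains the $m$-th roots of unity. So $\Q(\zeta_n,2^{1/m})/\Q(\zeta_n)$ is a Kummer extension. Its degree equals the order of the class of $2$ in $\Q(\zeta_n)^\times/(\Q(\zeta_n)^\times)^m$. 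Equivalently, it is $m/e$, where $e$ is the largest divisor of $m$ with $2\in(\Q(\zeta_n)^\times)^{e}$.

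The key step is to show that $e\in\{1,2\}$, with $e=2$ exactly when $2\mid m$ and $\sqrt2\in\Q(\zeta_n)$. The latter holds if and only if $8\mid n$, by the stated observation. Indeed, $\sqrt2\in\Q(\zeta_8)$, and the unique quadratic subfields of $\Q(\zeta_n)$ have discriminant dividing $n$ up to sign. Since $\Q(\sqrt2)$ has discriminant $8$, we need $8\mid n$.

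To see that no higher power works, suppose $2=\beta^{\ell}$ with $\beta\in\Q(\zeta_n)$ and some prime $\ell$. If $\ell$ is odd, then $\Q(2^{1/\ell})$ would embed in an abelian extension of $\Q$. But $2^{1/\ell}$ has a non-normal real conjugate structure: $x^\ell-2$ is irreducible with non-abelian splitting field, and its root field is not Galois over $\Q$. This is a contradiction.

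If $\ell=2$ and we ask whether $\sqrt2$ is itself a square, i.e. $2^{1/4}\in\Q(\zeta_n)$, then $\Q(2^{1/4})$ is not Galois over $\Q$ (its Galois closure is dihedral of order $8$). Hence it cannot lie in the abelian field $\Q(\zeta_n)$. One subtlety remains: $\beta$ only needs to satisfy $\beta^4=2$ up to the choice of root, so $\beta=\zeta\,2^{1/4}$ with $\zeta$ a fourth root of unity. We may use $i\in\Q(\zeta_n)$ when $4\mid n$. Even then, $\Q(i\,2^{1/4})$ is again a non-Galois quartic, since $x^4-2$ has the same splitting field. So we get the same contradiction.

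Hence $2$ is an $e$-th power with $e\mid m$ only for $e\mid 2$, and $e=2$ occurs iff $8\mid n$ (and $2\mid m$). This gives degree $m/2$ or $m$ over $\Q(\zeta_n)$. Multiplying by $\varphi(n)$ yields the lemma.

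The main obstacle is the Kummer-theory reduction: the degree equals $m$ divided by the maximal $e$. This needs the standard fact that for $a\in K^\times$ with $\mu_m\subset K$, $[K(a^{1/m}):K]$ is the order of $a$ in $K^\times/K^{\times m}$. One must then rule out all $e>2$, which is handled by the abelian-versus-non-Galois argument above.
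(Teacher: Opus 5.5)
Your proposal is correct and is essentially the argument the paper has in mind. The paper gives no proof of its own: it cites \cite[Lemma 1]{MoreeI} and notes that the lemma rests on $\sqrt{2}\in\Q(\zeta_n)$ if and only if $8\mid n$, which is exactly where your Kummer-theoretic reduction over $\Q(\zeta_n)$ lands. Your non-Galois argument ruling out $2$ being an $\ell$-th power for odd primes $\ell$, or a fourth power, in $\Q(\zeta_n)$ supplies the detail the paper leaves to the citation. Your unjustified step ``order $=m/e$'' is valid because $\mu_m\subset\Q(\zeta_n)$.
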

\noindent 
We infer that $[\Q(\zeta_{2n},2^{1/n}):\Q]=\varphi(2n)n$ if $4\nmid n$, 
which in
combination with \eqref{startup} yields
$$
\delta(Q(1,1))=\sum_{n=1}^{\infty}\frac{\mu(n)}{\varphi(2n)n}.
$$
By considering the odd and even $n$ separately, one obtains 
$$
\delta(Q(1,1))=\sum_{\substack{n=1\\2\nmid n}}^{\infty}\frac{\mu(n)}{\varphi(2n)n}+\sum_{m=1}^{\infty}\frac{\mu(2m)}{\varphi(4m)2m}=\frac{3}{4}\sum_{\substack{n=1\\2\nmid n}}^{\infty}\frac{\mu(n)}{\varphi(n)n}.$$
Using that 
\begin{equation}
\label{EulerP}    
\sum_{\substack{n=1\\ (j,n)=1}}^{\infty}\mu(n)f(n)=\prod_{p\nmid j}(1-f(p))  
\end{equation}
holds certainly true if the sum is absolutely convergent and $f(n)$ is a multiplicative function defined on the squarefree integers, we
then get
$$
\delta(Q(1,1))=\frac{3}{4}\prod_{p>2}\Big (1-\frac{1}{p(p-1)} \Big)
=\frac{3}{2}A.
$$
This backside-of-the-envelope calculation simplifies the calculation at
\cite[pp.\,72--73]{HKMS} even further (where for 
$[\Q(\zeta_{2n},2^{1/2n}) : \Q]$ in the 
final line of page 72 one should read  $[\Q(\zeta_{2n},2^{1/n}) : \Q]$).
\par By the same heads-on approach we now determine $\delta(Q(8,j))$ with $j$ odd.
\begin{lem}
\label{lem:Q8}
Let $j$ be an odd integer.
If $j\equiv 1\pmod{8}$, then $Q(8,j)$ is empty, otherwise
under GRH we have 
$\delta(Q(8,j))=A/2$.
\end{lem}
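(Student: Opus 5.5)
First I dispose of $j\equiv1\pmod 8$. If $p\equiv1\pmod8$, then $2$ is a quadratic residue by \eqref{QR}. So $2=y^2$ for some $y$, hence $4=y^4$ is a fourth power and $\ord_p(4)\mid (p-1)/4$. This is incompatible with $\ord_p(4)=(p-1)/2$, so $Q(8,j)=\emptyset$.

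For the remaining $j\in\{3,5,7\}$ modulo $8$, I will not work in the Galois-theoretic setting with Frobenius conditions directly. Instead I will use \eqref{wagstaffK} with $K=\Q(\zeta_8)$ and its subfields, followed by inclusion--exclusion over the characters of $(\Z/8\Z)^\times$. Let $D(K)$ denote the density of primes splitting completely in $K$ with $\ord_p(4)=(p-1)/2$. By \eqref{wagstaffK} and the identity $\Q(\zeta_{2n},4^{1/2n})=\Q(\zeta_{2n},2^{1/n})$, we have
\[
D(K)=\sum_{n\ge1}\frac{\mu(n)}{[K(\zeta_{2n},2^{1/n}):\Q]}.
\]
I take $K$ to be $\Q$, $\Q(i)$, $\Q(\sqrt2)$, $\Q(\sqrt{-2})$ and $\Q(\zeta_8)$. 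The primes splitting completely in these fields are those with $p\equiv1\pmod 4$, $p\equiv\pm1\pmod 8$, $p\equiv1,3\pmod 8$, and $p\equiv1\pmod8$ respectively. Since $\delta(Q(8,1))=0$, this gives
\[
\delta(Q(8,5))=D(\Q(i)),\quad \delta(Q(8,7))=D(\Q(\sqrt2)),\quad \delta(Q(8,3))=D(\Q(\sqrt{-2})).
\]

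Next I compute the degrees. For squarefree $n$, write $n$ as odd $m$ or as $2m$ with $m$ odd. For odd $n$, $\Q(\zeta_{2n},2^{1/n})=\Q(\zeta_n,2^{1/n})$, which has degree $\varphi(n)n$ and contains no quadratic subfield of $\Q(\zeta_8)$. Compositing with a quadratic $K$ therefore doubles the degree, giving $2\varphi(n)n$. For $n=2m$ the field is $\Q(\zeta_{4m},2^{1/2m})$. It contains $i$ and $\sqrt2$, hence all of $\Q(\zeta_8)$, and so compositing with $K$ changes nothing. By Lemma~\ref{lem:graad} (with $8\nmid 4m$) its degree is $\varphi(4m)\cdot 2m=4\varphi(m)m$. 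The hardest point is to confirm this carefully for each quadratic $K$: one must check that $\sqrt{-2}$, $i$ and $\sqrt2$ lie in $\Q(\zeta_{4m},\sqrt2)$ but not in $\Q(\zeta_m,2^{1/m})$ for odd $m$. The latter holds because that field has odd degree over $\Q(\zeta_m)$, and $\Q(\zeta_m)$ contains none of these square roots.

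It remains to sum. For each quadratic $K$,
\[
D(K)=\sum_{m\ \mathrm{odd}}\frac{\mu(m)}{2\varphi(m)m}-\sum_{m\ \mathrm{odd}}\frac{\mu(m)}{4\varphi(m)m}=\frac14\prod_{p>2}\Bigl(1-\frac1{p(p-1)}\Bigr)=\frac14\cdot 2A=\frac A2,
\]
using \eqref{EulerP} and $A=\tfrac12\prod_{p>2}(1-1/(p(p-1)))$. As a consistency check, the three values sum to $3A/2=\delta(Q(1,1))$.
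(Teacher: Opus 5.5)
Your proof is correct and takes essentially the paper's route: \eqref{wagstaffK} with $K$ a quadratic subfield of $\Q(\zeta_8)$, the emptiness of $Q(8,1)$ to identify $Q(8,5)$ and $Q(8,7)$ with primes splitting in $\Q(i)$ and $\Q(\sqrt2)$, degrees from Lemma~\ref{lem:graad}, and the Euler product \eqref{EulerP}. The only difference is that you get $Q(8,3)$ directly from $K=\Q(\sqrt{-2})$, whereas the paper subtracts from $\delta(Q(1,1))=3A/2$, which you use only as a consistency check; the inclusion--exclusion over characters with $K=\Q,\Q(\zeta_8)$ that you announce is never actually needed.
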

\begin{proof}
If $p\equiv 1\pmod{8}$, then $\ord_p(2)\mid \frac{p-1}{2}$ by \eqref{QR}. Since $\frac{p-1}2$ is even,
it follows that $\ord_p(4)$ divides $(p-1)/4$ and hence $Q(8,1)$ is empty.
We see that $Q(8,5)=Q(4,1)$, the density of which we compute by setting 
$K=\Q(i)$, $g=4$ and $t=2$ in \eqref{wagstaffK}. Since
$\Q(\zeta_a,\zeta_b)=\Q(\zeta_{\lcm(a,b)})$,
we conclude by Lemma \ref{lem:graad} that $[\Q(i,\zeta_{2n},2^{1/n}) : \Q]=
\varphi(\lcm(4,2n))n$ if $4\nmid n$.
It follows that
$$\delta(Q(8,5))=\delta(Q(4,1))=\frac{1}2\sum_{n=1}^{\infty}\frac{2\mu(n)}{\varphi(\lcm(4,2n))n}=\frac{A}{2}.$$
In the final step we used that the argument in the sum is a multiplicative function in
$n$ and applied \eqref{EulerP} with $j=1$. 
\par The primes $p$ that split completely in
$\Q(\sqrt{2})$ are precisely the primes $p$ with
$\legendre{2}{p}=1$, that is satisfy
$p\equiv \pm 1\pmod{8}$. Recalling that $Q(8,1)$ is empty and using Lemma \ref{lem:graad} it now follows that
$$\delta(Q(8,7))=\sum_{n=1}^{\infty}\frac{\mu(n)}{[\Q(\sqrt{2},\zeta_{2n},2^{1/n}) : \Q]}=\frac{1}{2}\sum_{n=1}^{\infty}\frac{\mu(n)}{\varphi(n)n}=\frac{A}2.$$
Finally,
\[
\delta(Q(8,3))=\delta(Q(1,1))-\delta(Q(8,5))-\delta(Q(8,7))=\frac{3}{2}A-\frac{A}2-\frac{A}2=\frac{A}2.\tag*{\qed}
\]
\renewcommand{\qed}{}
\end{proof}
\begin{rem}
These computations allow one to check once more that, under GRH, $\delta_1=A$ (as claimed in Theorem \ref{main}). Under that assumption we namely
have $$\delta_1=\delta(Q(4,3))=\delta(Q(8,3))+\delta(Q(8,7))=A/2+A/2=A.$$
\end{rem}
\subsection{An application}
Rodier \cite{Rodier} in the context of extended BCH codes established the following result.
\begin{thm}
Suppose that $p\equiv 7\pmod{8}$ and $\ord_p(2)=(p-1)/2$. Put
$$S_m=\sum_{\alpha \in \F_{2^m}}(-1)^{\ \operatorname{Tr}(\alpha^p)},$$
where$$\ \operatorname{Tr}(x)\coloneq \sum_{i=0}^{m-1}x^{2^i}$$
is the trace of $\F_{2^m}$ over $\F_2$.
We have $$\limsup_{\substack{m\rightarrow \infty\\ 2\, \nmid\, m}}\frac{|S_m|}{2^{m/2}}=p-1.$$
\end{thm}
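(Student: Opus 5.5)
The idea is to realise $S_m$ as the trace of Frobenius on an Artin--Schreier curve. Then the Weil bound gives the upper limit, and a Gauss sum computation gives enough structure on the Frobenius eigenvalues to show that bound is approached along odd $m$.

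\textbf{Curve and Weil bound.} Consider the curve $C\colon y^2+y=x^p$ over $\F_2$; it has genus $g=(p-1)/2$. For each $\alpha\in\F_{2^m}$, the equation $y^2+y=\alpha^p$ has $1+(-1)^{\operatorname{Tr}(\alpha^p)}$ solutions $y$. So the number of affine points of $C$ over $\F_{2^m}$ is $2^m+S_m$. There is one point at infinity, since $p$ is odd. Hence, by the Weil conjectures,
\[
S_m=-\sum_{i=1}^{p-1}\omega_i^m,\qquad |\omega_i|=\sqrt2 ,
\]
which already gives $|S_m|\le (p-1)2^{m/2}$. Writing $\omega_i=\sqrt2\,u_i$ with $|u_i|=1$, it remains to find infinitely many odd $m$ with $\bigl|\sum_i u_i^m\bigr|$ arbitrarily close to $p-1$.

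\textbf{Eigenvalues via Gauss sums.} Put $f=\ord_p(2)=(p-1)/2$. Since $p\equiv 7\pmod 8$, $f\equiv 3\pmod 4$, so $f$ is odd. Over $\F_{2^f}$, where $p\mid 2^f-1$, the standard decomposition of the Artin--Schreier sum into multiplicative characters gives
\[
\prod_i(1-\omega_i^fT)=\prod_{\chi^p=1,\ \chi\ne 1}\bigl(1+G(\chi)T\bigr),
\]
with $G(\chi)$ the Gauss sum over $\F_{2^f}$. Because $\langle 2\rangle$ has index $2$ in $\F_p^\times$ and $-1\notin\langle 2\rangle$ (as $p\equiv 3\pmod 4$), the $p-1$ characters fall into two Frobenius orbits interchanged by complex conjugation. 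So $G(\chi)$ takes only two values, $\theta$ and $\bar\theta$, each $(p-1)/2$ times. This is the classical index-two evaluation, with $\theta\in\Q(\sqrt{-p})$ and $|\theta|=2^{f/2}$. Consequently $u_i^f\in\{e^{i\varphi},e^{-i\varphi}\}$, with each value occurring $(p-1)/2$ times, where $e^{i\varphi}=-\theta/2^{f/2}$.

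\textbf{Choice of $m$.} Take $m=fr$ with $r$ odd; then $m$ is odd. This gives
\[
\sum_i u_i^m=\tfrac{p-1}{2}\bigl(e^{ir\varphi}+e^{-ir\varphi}\bigr)=(p-1)\cos(r\varphi).
\]
So it suffices to show that $\varphi/\pi$ is irrational. Then $\{r\varphi/2\pi\}$, for $r$ running over odd integers, is equidistributed modulo $1$, so $|\cos(r\varphi)|$ gets arbitrarily close to $1$ and the limsup is $p-1$.

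\textbf{Main obstacle: irrationality of $\varphi/\pi$.} This amounts to showing that $\theta/\bar\theta$ is not a root of unity. I would use Stickelberger's theorem: $(\theta)$ is, up to units, a power of a single prime $\mathfrak P$ above $2$ in $\Q(\sqrt{-p})$. Since $p\equiv 7\pmod 8$, $2$ splits as $\mathfrak P\bar{\mathfrak P}$ with $\mathfrak P\ne\bar{\mathfrak P}$. If $\theta/\bar\theta$ were a root of unity, then $(\theta)=(\bar\theta)$, which would force $\mathfrak P^k=\bar{\mathfrak P}^k$ for some $k>0$. That is a contradiction, because $|\theta|>1$ means $(\theta)$ is a nontrivial ideal, so $k>0$. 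Verifying the exact Stickelberger exponent and the multiplicities of $\theta$ and $\bar\theta$ is where I expect the real care to be needed.
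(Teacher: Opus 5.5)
The paper gives no proof of this statement. It is quoted from Rodier, and the paper only remarks that the relevant primes form $Q(8,7)$, so your argument has to stand on its own. Most of it does. The following steps are all correct:
\begin{itemize}
\item the Artin--Schreier curve together with the Weil bound;
\item the Hasse--Davenport identification of $\{\omega_i^f\}$ with $\{-G(\chi)\}$;
\item the collapse to the two values $\theta,\bar\theta$ (here $\theta\in\Q(\sqrt{-p})$ simply because $G(\chi)\in\Q(\zeta_p)$ is fixed by $\sigma_2$);
\item the choice $m=fr$ with $r$ odd;
\item the reduction to showing that $\theta/\bar\theta$ is not a root of unity.
\end{itemize}

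The final step, however, rests on a false claim. Stickelberger does \emph{not} say that $(\theta)$ is, up to units, a power of a single prime above $2$. Already for $p=7$ you have $f=3$ and, for a suitable $\chi$, $\theta=-1+\sqrt{-7}=2\cdot\frac{-1+\sqrt{-7}}{2}$. Hence $(\theta)=\mathfrak P^2\bar{\mathfrak P}$ with $\mathfrak P=\bigl(\frac{-1+\sqrt{-7}}{2}\bigr)$. In general the exponents of $\mathfrak P$ and $\bar{\mathfrak P}$ in $(\theta)$ are $(f+h)/2$ and $(f-h)/2$, where $h$ is the class number of $\Q(\sqrt{-p})$. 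So the contradiction $\mathfrak P^k=\bar{\mathfrak P}^k$ never arises as you wrote it.

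The repair needs no Stickelberger at all. Since $\theta\bar\theta=2^f$, the ideal $(\theta)$ divides $(2)^f=\mathfrak P^f\bar{\mathfrak P}^f$, where $\mathfrak P\neq\bar{\mathfrak P}$ because $p\equiv 7\pmod 8$. Comparing with $(\theta)(\bar\theta)=(2)^f$ gives $(\theta)=\mathfrak P^{e}\bar{\mathfrak P}^{f-e}$ and $(\bar\theta)=\mathfrak P^{f-e}\bar{\mathfrak P}^{e}$ for some $0\le e\le f$. Thus $(\theta)=(\bar\theta)$ would force $f=2e$, which is impossible because $f=(p-1)/2$ is odd.

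Notice that your version of this step never used the oddness of $f$. That oddness is exactly what excludes $(\theta)=(2)^{f/2}$, the only way the two ideals could coincide. With this replacement your proof is complete.
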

Note that the primes $p$ for which the theorem applies form the set $Q(8,7)$ and
by Lemma \ref{lem:Q8} have density $A/2$ under GRH. Rodier outlined a
different proof of this.
\section{Proof of Theorem \ref{main}}\label{Pf of thm 2 sec}
\emph{Proof}. By Corollary \ref{Opequivalence} the set $P_k$ consists of the primes $p\equiv 1\pmod{2k}$, for which $\ord_p(4)=(p-1)/2k$ and $\ord_p(4)$ is odd.

If $k \equiv 2 \pmod{4}$, then $(p-1)/2k$ is odd implies that $p\equiv 5\ \pmod{8}$ and $\ord_{p}(4)=(p-1)/2k$ implies that $\ord_{p}(2) \mid \frac{p-1}{2}$ and hence $\legendre{2}{p}=1$. By the supplementary law of quadratic reciprocity \eqref{QR}, the conditions $\legendre{2}{p}=1$ and $p\equiv 5\pmod{8}$ cannot be satisfied at the same time. Hence
$P_k$ is empty.\\
\indent For the remainder of the proof we assume GRH. Then it follows from \cite[Theorem 3.1]{HKMS} that the number of odd primes $p\leq x$ such that $\ord_p(4)=(p-1)/2k$ and $p\equiv 1 \pmod{f}$, where $f$ is an arbitrary natural number, is given by
\[ \delta(f,k)\frac{x}{\log x} + O_{k,f}\Big(\frac{x\log\log x}{\log^2x} \Big) \]
where
\[ \delta(f,k)\coloneq \sum_{n=1}^{\infty}\frac{\mu(n)}{[\Q(\zeta_f,\zeta_{2nk},2^{1/nk}) : \Q]}. \]

Let $p\equiv 1\pmod{2k}$ be a prime. Put $r=\nu_2(2k)$. Note that $(p-1)/2k$ is odd if and only if $p\equiv 1+2^r \pmod{2^{r+1}}$. We conclude that 
\begin{equation}
\label{asymptoticforPk}    
P_k(x) = \delta_k \frac{x}{\log x} + O_{k}\Big(\frac{x\log\log x}{\log^2x} \Big)
\end{equation}
where $\delta_k=\delta(2^r,2k)-\delta(2^{r+1}, 2k)$. 

From now on we assume that $k \not\equiv 2 \pmod{4}$. It is easy to see that \begin{equation*}
\delta_k
=\sum_{\substack{n=1\\ 
2\, \nmid\, n}}^{\infty}\Bigl(\frac{\mu(n)}{[\Q(\zeta_{2nk}, 2^{1/nk}) : \Q]}-\frac{\mu(n)}{[\Q(\zeta_{4nk}, 2^{1/nk}) : \Q]}\Bigr).
\end{equation*}
For $n$ odd and $k \not\equiv 2 \pmod{4}$ it follows from Lemma \ref{lem:graad} that 
$$[\Q(\zeta_{4nk}, 2^{1/nk}) : \Q]=2[\Q(\zeta_{2nk}, 2^{1/nk}) : \Q]$$ and
hence we obtain 
\begin{equation}
\label{tweede}
\delta_k=\delta(2^r,2k)-\delta(2^{r+1},2k)=\sum_{\substack{n=1\\ 
2\, \nmid\, n}}^{\infty}\frac{\mu(n)}{[\Q(\zeta_{4nk}, 2^{1/nk}) : \Q]}.
\end{equation}
On invoking Lemma \ref{lem:graad} again, we obtain
\begin{equation}\label{k case tweede}
\sum_{\substack{n=1\\ 2\, \nmid\, n}}^{\infty}\frac{\mu(n)}{[\Q(\zeta_{4nk}, 2^{1/nk}) : \Q]}=
\begin{cases}
2 S(k) \ \textrm{if } 2 \mid k;\\[1ex]
S(k) \ \textrm{if } 2 \nmid k.
\end{cases}
\end{equation}
where
\[ 
S(k) \coloneq\sum_{\substack{n\geq 1\\2\nmid n}} \frac{\mu(n)}{\varphi(4nk)nk}. 
\]
We have
$$S(k)=\frac{1}{k\varphi(4k)}\sum_{\substack{n\geq 1\\2\nmid n}} \frac{\mu(n)\varphi(4k)}{\varphi(4nk)n}=\frac{1}{k\varphi(4k)}\prod_{\substack{p\mid k\\p>2}}\Bigl(1-\frac{1}{p^2}\Bigr) \prod_{\substack{p\nmid k\\p>2}} \Bigl(1-\frac{1}{p(p-1)}\Bigr),$$
where we used that the argument of the sum is a multiplicative function in $n$.
Recall the definition \eqref{def:Ek} of $E(k)$.
Note that
$$A\cdot E(k)=\frac{1}{k\varphi(k)}\prod_{p\mid k}\Bigl(1-\frac{1}{p^2}\Bigr) \prod_{p\nmid k} \Bigl(1-\frac{1}{p(p-1)}\Bigr).$$
Comparing the Euler product of $S(k)$ with that of $E(k)$, we see that
$$S(k)=A\cdot E(k)\frac{\varphi(k)}{\varphi(4k)}b\quad\text{~with~}\quad
b=
\begin{cases}
\frac{4}{3} \ \textrm{if } 2 \mid k;\\[1ex]
2 \ \textrm{if } 2 \nmid k.
\end{cases}
$$
The proof is completed on combining  \eqref{asymptoticforPk}, \eqref{tweede}, \eqref{k case tweede} and the latter identity. 
\qed

\section{Level sets of \texorpdfstring{$\mc{O}(p)$}{Op} with \texorpdfstring{$p$}{p} in a prescribed arithmetic progression} \label{Mult ord sec}
\par Given a prescribed arithmetic progression $a\pmod{d}$ and any integer
$k\ge 0$, recall that
$$P_{k}(a,d)\coloneq \{p\equiv a\pmod{d} :~\mc{O}(p)=k\}.$$
The case $k=0$, which does not fall within the near-primitive-root framework used below, is treated separately in Section~\ref{sec:k-zero}. We henceforth assume that $k\geq 1$.
\begin{thm}[under GRH] 
\label{thmgeneral}
Let $a,d,k$ be positive integers.
The set $P_k(a,d)$ has a natural density $\delta_k(a,d)$, and $P_k(a,d)(x)$, the number of primes $p\leq x$ in $P_k(a,d)$, satisfies
\[P_k(a,d)(x)=\delta_k(a,d)\frac{x}{\log x} + O_{k,d}\bigg( \frac{x\log\log x}{(\log x)^2} \bigg). \]
\end{thm}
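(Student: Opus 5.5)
By Corollary \ref{Opequivalence}, for $k\ge 1$ a prime $p$ lies in $P_k(a,d)$ exactly when $p\equiv a\pmod d$, $p\equiv 1\pmod{2k}$, $\ord_p(4)=(p-1)/2k$, and $(p-1)/2k$ is odd. We will follow the proof of Theorem \ref{main} closely.

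\textbf{Step 1: reduce to a single congruence condition.} Put $r=\nu_2(2k)$ and $m=\lcm(d,2^{r+1},2k)$. The three congruence conditions ($p\equiv a \pmod d$, $p\equiv 1\pmod{2k}$, and $p\equiv 1+2^r\pmod{2^{r+1}}$) together amount to $p$ lying in a set of residue classes modulo $m$. This set is either empty, in which case $P_k(a,d)$ is finite and the density is $0$, or, by the Chinese Remainder Theorem, a single class $b\pmod m$ with $b\equiv 1\pmod{2k}$.

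\textbf{Step 2: count primes with prescribed order in one class.} We need an asymptotic for
\[
\#\{p\le x: p\equiv b\pmod m,\ \ord_p(4)=(p-1)/2k\}.
\]
First detect the condition $p\equiv b\pmod m$ by the Frobenius of $p$ in $\Q(\zeta_m)/\Q$. Then apply the Chebotarev-type generalization of Hooley's argument that underlies \cite[Theorem 3.1]{HKMS}, i.e.\ the version of \eqref{wagstaffK} with a prescribed Artin symbol. This gives, under GRH,
\[
\delta_k(a,d)=\sum_{n\ge1}\mu(n)\,\frac{c_n}{[\Q(\zeta_{m},\zeta_{2nk},2^{1/nk}):\Q]}.
\]
Here $c_n\in\{0,1\}$ records whether the automorphism $\sigma_b$ of $\Q(\zeta_m)$ (sending $\zeta_m\mapsto\zeta_m^b$) restricts to the identity on $\Q(\zeta_m)\cap\Q(\zeta_{2nk},2^{1/nk})$. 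This intersection is $\Q(\zeta_{\gcd(m,2nk)})$, possibly enlarged by $\sqrt2$ when $8\mid m$ and $2\mid nk$. By Lemma \ref{lem:graad}, the only entanglement comes from $\sqrt2\in\Q(\zeta_8)$, so $c_n$ is explicit: it requires $b\equiv1\pmod{\gcd(m,2nk)}$ and, in the $\sqrt2$ case, $\legendre{2}{b}=1$ via \eqref{QR}.

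\textbf{Step 3: error term and convergence.} The error term $O_{k,d}(x\log\log x/\log^2 x)$ comes from Hooley's method: GRH-Chebotarev for $n\le \sqrt x/\log^2 x$, and Brun–Titchmarsh-type bounds for the large $n$ ranges. These go through uniformly because $m$ depends only on $k$ and $d$. Convergence of the series is immediate since the degrees are $\gg n^2/\log\log n$.

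The main obstacle is Step 2. The standard result (\cite[Theorem 3.1]{HKMS}) is stated for the condition $p\equiv1\pmod f$, whereas here we need the class $b\pmod m$ with $b\not\equiv1$ in general. I would first try to reduce to the split case. Alternatively, invoke \cite[Prop.\,1.12]{HKMS} or Moree's results on near-primitive roots in arithmetic progressions, which handle exactly this situation; if necessary, rerun Hooley's argument with Chebotarev for the conjugacy class $\{\sigma\}$ in the compositum. The existence of the natural density and the stated error term then follow; we do not need a closed form for $\delta_k(a,d)$.
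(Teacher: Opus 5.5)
Your proposal is correct and is essentially the paper's proof. Your modulus $m=\lcm(d,2^{r+1},2k)$ equals the paper's $D=\lcm(d,4k)$, your class $b$ is the one with $b\equiv a\pmod d$ and $b\equiv 1+2k\pmod{4k}$, and the paper then applies \cite[Thm.~3.1]{HKMS} directly to $P_k(a,d)=\mathcal P(4,2k,D,b)$. The ``main obstacle'' you flag does not arise: that theorem is stated for an arbitrary residue class (the paper's notation $\mathcal P(g,t,d,a)$), and its density is exactly your kind of series with Galois-theoretic intersection coefficients $c_n$, so you need neither a reduction to the split case nor a rerun of Hooley's argument. Note also that \cite[Prop.~1.12]{HKMS}, as the paper uses it, only treats $\ord_p(4)=(p-1)/2$, i.e.\ the case $k=1$.
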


\begin{proof}
We may assume that $a$ and $d$ are coprime, since otherwise 
the result is trivially true (with $\delta_k(a,d)=0$).
Note that 
\[
P_{k}(a,d)=\Bigl\{p\equiv a\pmod{d} :~p\equiv 1+2k\pmod{4k}\ \textrm{ ~and~}\ord_p(4)=\frac{p-1}{2k}\Bigr\}.
\]
In the notation of Hu et al.~\cite{HKMS}, let
\[
\mathcal{P}(g,t,d,a)=\left\{p:~p\equiv1\pmod{t},\quad p\equiv a\pmod{d},\quad \ord_p(g)=\frac{p-1}{t} \right\}.
\]
Put $D=\lcm(d,4k)$. If $a\not\equiv 1+2k\pmod{(d,4k)}$, then the two congruence conditions in the definition of
$P_{k}(a,d)$ are incompatible and $P_k(a,d)$ is an empty set. Otherwise, let $b\pmod{D}$ be the unique residue class determined by
\[
b\equiv a\pmod{d}, \qquad b\equiv 1+2k\pmod{4k}.
\]
Then $P_k(a,d)=\mathcal P(4,2k,D,b)$. It now follows from \cite[Thm.~3.1]{HKMS} that, under GRH, the number of primes $p\leq x$ in $P_k(a,d)$ is given by
\[ \delta(4,2k,D,b)\frac{x}{\log x} + O_{k,d}\bigg( \frac{x\log\log x}{(\log x)^2} \bigg), \]
where $\delta(4,2k,D,b)$ is the natural density of the set $\mathcal P(4,2k,D,b)$, and hence we deduce
\[
\delta_k(a,d)=\delta(4,2k,D,b).\qedhere
\]
\end{proof}

We remark that \cite[Thm.\,3.1]{HKMS} expresses $\delta(4,2k,D,b)$ as an infinite sum involving
certain Galois theoretic intersection coefficients. We do not formulate that result here, as
for us the simpler variant \eqref{wagstaffK} is enough.
\par Note that $\delta_k(a,d)$ does not change if we 
replace $P_k(a,d)$ by a set that only differs in finitely
many primes from it.
Thus in particular $\delta_k(1,1)=\delta_k(1,2)$, which for ease of notation we 
denote by $\delta_k$. 
\subsection{The case \texorpdfstring{$k=0$}{k=0}}\label{sec:k-zero}
Here we are interested in counting the primes $p$ for which
$\ord_p(4)$ is even. 
Counting the primes $p$ for which,
for given integers $d$ and $g$ we have $d\mid \ord_p(g)$, is
a well-studied problem. In this case the density can be shown to exist 
unconditionally. It is rather easy to
determine and turns out to be a rational number, see for example \cite{MoreeFACM,Wie1,Wie2}.  
\par The case $d=2$ is special. 
For an odd prime $p\nmid g$, the multiplicative order $\ord_p(g)$ is even if and only if $g^k \equiv -1 \pmod{p}$ for some $k\geq 1$, or equivalently, if and only if $p$ divides some term of the sequence $(g^k+1)_{k\ge 1}$.
Recall that
an integer $n$ divides a sequence of integers $(a_k)_{k\ge 1}$ if it divides at least one non-zero term.
Thus, questions concerning the evenness of the multiplicative order may be viewed as questions about the prime divisors of this sequence. The special case $g=2$, corresponding to the sequence $(2^k+1)_{k\ge 1}$,
has been studied particularly extensively. For an informal description of the method used here, see the survey by Moree \cite{Msurvey}.
Integers dividing the sequence $(a^k+b^k)_{k\ge 1}$ play an important role in coding theory, see \cite{Jitman,Jitman2}.

\begin{prop} \label{O(n) zero thm}
Let $p$ be an odd prime. We have $\mc{O}(p)=0$ if and only if
$p$ divides $(4^k+1)_{k\ge 1}$.
Let $m>1$ be a positive integer. If $m$ divides $(4^k+1)_{k\ge 1}$, then $\mc{O}(m)=0$.
\end{prop}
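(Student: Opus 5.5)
The plan is to reduce both assertions to the parity criterion for $\ord_p(4)$ recalled just before the statement: for an odd prime $p\nmid g$, $\ord_p(g)$ is even if and only if $g^k\equiv-1\pmod p$ for some $k\ge1$. The key tools will be Corollary~\ref{Opequivalence} and Lemma~\ref{lem-even}.

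For the prime statement, I will apply Corollary~\ref{Opequivalence}, which says that $\mc{O}(p)=0$ precisely when $\ord_p(4)$ is even. Note that $p\nmid 4$ since $p$ is odd. It remains to show that $\ord_p(4)$ is even if and only if $4^k\equiv-1\pmod p$ for some $k$.

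If $\ord_p(4)=2s$, then $(4^s)^2\equiv1$ and $4^s\not\equiv1$. Since $p$ is prime, the only square roots of $1$ modulo $p$ are $\pm1$, so $4^s\equiv-1$. Conversely, suppose $4^k\equiv-1\pmod p$. Then $\ord_p(4)\mid 2k$ but $\ord_p(4)\nmid k$, because $-1\not\equiv1$ for odd $p$. Hence $\ord_p(4)$ is even. The only subtlety is that $p=3$ should not divide any term $4^k+1$, and indeed $4^k+1\equiv2\pmod 3$, consistent with $\mc{O}(3)=1$.

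For the composite statement, suppose $m\mid 4^k+1$ for some $k$. Then $m$ is odd, since $4^k+1$ is odd, and every prime divisor $p$ of $m$ also divides $4^k+1$. By the first part (or directly by the argument above), $\ord_p(4)$ is even for every $p\mid m$. Lemma~\ref{lem-even} then gives $\mc{O}(m)=0$: the largest divisor of $m$ composed only of primes with $\ord_p(4)$ odd is $1$. The case $m\ge3$ is exactly the hypothesis of that lemma.

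I expect no real obstacle. The one point to check is the use of Lemma~\ref{lem-even}, which in turn rests on Lemma~\ref{lem:CRTord} and Proposition~\ref{prop:ord4-formula}. An evenness condition at all primes forces every $\ord_d(4)$ with $d\mid m$, $d>1$, to be even, so every term in \eqref{O(n) defn interms ord of 4} vanishes. One could also verify this directly: $\ord_d(4)$ is the lcm of the $\ord_{p^e}(4)$, and each of these is divisible by the even $\ord_p(4)$.
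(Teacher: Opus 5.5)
Your proposal is correct and follows the paper's proof. Both reduce the prime case to Corollary~\ref{Opequivalence} together with the criterion that $\ord_p(4)$ is even if and only if $4^k\equiv-1\pmod p$ for some $k$, and both deduce the composite case from Lemma~\ref{lem-even}; the one difference is that you prove this criterion (via the square roots of $1$ modulo $p$), whereas the paper only cites it from the opening remarks of the section.
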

\begin{proof}
As remarked in the introduction of this section, an odd prime $p$ divides $(4^k+1)_{k\ge 1}$ if and only if $\ord_p(4)$ is even, which is equivalent with  $\mc{O}(p)=0$.
All prime divisors $p$ of $m$ have even $\ord_p(4)$.
Therefore, by Lemma \ref{lem-even}, we have  $\mc{O}(m)=0$.
\end{proof}
\begin{rem}
This proof shows that if $m$ divides $(4^k+1)_{k\ge 1}$ for some $k\geq 1$, then $\ord_p(4)$ is even for every prime $p\mid m$. The converse, however, does not hold. Indeed, the condition that
$m$ divide $(4^k+1)_{k \ge 1}$ is equivalent to the existence of an integer $e\geq 1$ such that $\nu_{2}(\ord_{p}(2))=e$ for every prime $p\mid m$ by \cite[Theorem~1]{Moreediv}. On the other hand, the condition
$\mc{O}(m)=0$ requires only that $\nu_{2}(\ord_{p}(2)) \geq 1$ for every prime $p\mid m$. The contrast one also sees asymptotically: the number of integers $m\le x$ dividing 
$(4^k+1)_{k\ge 1}$ grows asymptotically as $C_1x(\log x)^{-2/3}$ (by \cite[Theorem 5]{Moreediv}) and when divided by $\#\{m\le x:\mc{O}(m)=0\}$ the ratio tends to zero by Theorem \ref{density O(m) is zero thm}.
\end{rem}

Recall that the \emph{logarithmic integral} is defined by $\Li(x)=\int_2^x \frac{dt}{\log t}$.
\begin{prop}
\label{512}
Let
$$P_0\coloneq \{p>2:{\mc{O}}(p)=0\}\quad \text{ and }\quad
P_0(x)\coloneq\#\{p\leq x:p\in P_0\}.$$
Then
\begin{equation}
\label{P0counting}    
P_{0}(x)=\frac{5}{12}\Li(x)+O\Bigl(\frac{x(\log\log x)^4}{(\log x)^{3}} \Bigr).
\end{equation}
In particular, the set of primes $p$ for which $\mc{O}(p)=0$ has natural density $\frac{5}{12}$.
\end{prop}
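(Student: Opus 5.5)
We must count primes $p\le x$ with $\ord_p(4)$ even; this condition is unconditional and Chebotarev-type. By Corollary~\ref{Opequivalence} and the remark after it, $\mc{O}(p)=0$ exactly when $4\mid \ord_p(2)$, or $\ord_p(2)\equiv 2\pmod 4$ with $\mc{O}(p)$ still zero. Let us recheck this. We have $\ord_p(4)$ even if and only if $\nu_2(\ord_p(2))\ge 2$. So the task is to count primes with $4\mid\ord_p(2)$, a classical problem (Hasse, Wiertelak, Moree) with density $5/12$. I will reprove it with an explicit error term as follows.

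Partition by $e=\nu_2(p-1)\ge 1$. We have $4\mid \ord_p(2)$ iff $e\ge 2$ and $2$ is not a $2^{e-1}$-th power residue mod $p$, i.e.\ $p$ does not split completely in $K_e:=\Q(\zeta_{2^{e}},2^{1/2^{e-1}})$. Primes with $\nu_2(p-1)=e$ are those splitting completely in $\Q(\zeta_{2^e})$ but not in $\Q(\zeta_{2^{e+1}})$. Hence
\[
P_0(x)=\sum_{e\ge 2}\Bigl(\pi(x;\Q(\zeta_{2^e}))-\pi(x;\Q(\zeta_{2^{e+1}}))-\pi(x;K_e)+\pi(x;K_e(\zeta_{2^{e+1}}))\Bigr),
\]
where $\pi(x;L)$ counts primes $\le x$ splitting completely in $L$. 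By Lemma~\ref{lem:graad}, $[\Q(\zeta_{2^e}):\Q]=2^{e-1}$. For $e\ge 3$ we get $[K_e:\Q]=2^{e-1}\cdot 2^{e-1}/2=2^{2e-3}$ and $[K_e(\zeta_{2^{e+1}}):\Q]=2^{2e-2}$, while for $e=2$ the degrees are $4$ and $8$. The main term is then
\[
\tfrac12-\tfrac18+\sum_{e\ge 3}\Bigl(\tfrac1{2^{e}}-\tfrac1{2^{2e-2}}\Bigr),
\]
which sums to $\tfrac{3}{8}+\tfrac14-\tfrac1{12}\cdot\ldots$. I will carefully evaluate it, expecting $5/12$; this is a routine geometric series.

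For the error term, I will truncate at $e\le E\approx \log\log x$-sized. The tail $e>E$ is bounded trivially by the number of primes $p\le x$ with $p\equiv 1\pmod{2^{E}}$, which is $\ll x/2^E$ by Brun--Titchmarsh. For $e\le E$, I will apply the unconditional effective Chebotarev theorem (Lagarias--Odlyzko) to fields of degree at most $2^{2E}$ and small discriminant. These fields are abelian over $\Q(\zeta_{2^{e+1}})$, or are Kummer extensions, so the error is $\Li(x)$ times $\exp(-c\sqrt{\log x}/\text{(degree)})$-type terms. This is the main obstacle: balancing the possible Siegel zero and the degree growth. The fix is to choose $2^{E}\asymp(\log x)^3/(\log\log x)^{4}$-ish, so that the tail contributes $O(x(\log\log x)^4/(\log x)^3)$. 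I must then check that the Chebotarev error for degrees up to $(\log x)^{6}$ stays below this bound. I would instead use the Siegel--Walfisz theorem for the cyclotomic parts, and for the Kummer fields reduce via $2=(\sqrt2)^2$ and quadratic reciprocity to congruence conditions only for small $e$. For larger $e$ I will lean on the known result \cite{MoreeFACM,Wie1}, which gives exactly this error term, citing it if the direct estimate fails. Finally, replacing $\Li(x)$-weighted sums by $\frac5{12}\Li(x)$ gives \eqref{P0counting}, and the density statement follows.
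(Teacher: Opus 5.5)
Your reduction is right once you straighten out the first sentence: if $\ord_p(2)\equiv 2\pmod 4$, then $\mc{O}(p)=\ind_p(2)>0$. The correct statement is that $\mc{O}(p)=0$ iff $\nu_2(\ord_p(2))\ge 2$. For $\nu_2(p-1)=e$ this holds iff $e\ge 2$ and $2$ is not a $2^{e-1}$-th power modulo $p$, so your four-term decomposition of $P_0(x)$ is valid.

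\textbf{The main term is miscomputed.} For $e=2$ one has $K_2=\Q(i,\sqrt2)=\Q(\zeta_8)$, so $K_2(\zeta_8)=K_2$ has degree $4$, not $8$ (Lemma~\ref{lem:graad} with $n=8$, $m=2$). The $e=2$ summand is therefore $\frac12-\frac14-\frac14+\frac14=\frac14$, which is just the primes $p\equiv 5\pmod 8$. Your expression $\frac12-\frac18+\sum_{e\ge 3}(\cdots)$ instead equals $\frac{13}{24}$. With the correct value,
\[
\frac14+\sum_{e\ge 3}\Bigl(\frac{1}{2^{e}}-\frac{1}{2^{2e-2}}\Bigr)=\frac14+\frac14-\frac1{12}=\frac5{12}.
\]
With this fix, Chebotarev for each fixed $e$ plus the Brun--Titchmarsh tail bound gives a self-contained proof of the density statement. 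The paper obtains that statement only by citation.

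\textbf{The error term in \eqref{P0counting} is not established.} The obstruction is not a Siegel zero, since the quadratic subfields $\Q(i),\Q(\sqrt2),\Q(\sqrt{-2})$ are fixed. It is the range of validity of the unconditional Lagarias--Odlyzko theorem, which needs $\log x\gg n_L(\log d_L)^2$. For $L=\Q(\zeta_{2^{e+1}},2^{1/2^{e-1}})$ one has $n_L\asymp 4^{e}$ and $\log d_L\asymp e\,4^{e}$, so the theorem applies only for $2^e\ll(\log x)^{1/6-\varepsilon}$. Brun--Titchmarsh bounds the tail by $x/(2^E\log x)$; you dropped the $\log x$. Combined, this gives an error of roughly $x/(\log x)^{7/6}$, the same quality as the Moree--Sury bound in Theorem~\ref{dens zero prop}. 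Reaching $x(\log\log x)^4/(\log x)^3$ would need asymptotics for these Kummer fields uniformly up to degrees around $(\log x)^2$ or more, far outside that range.

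Your proposed patches do not close this gap:
\begin{itemize}
\item Siegel--Walfisz controls only the cyclotomic counts $\pi(x;2^e,1)$.
\item Quadratic reciprocity turns the Kummer condition into a congruence only for $e=2$, because $\sqrt2\in\Q(\zeta_8)$.
\item For $e\ge 3$ the field contains $2^{1/4}$ and is non-abelian over $\Q$. Being a quartic or higher residue is then not a congruence condition (compare $p=x^2+64y^2$).
\end{itemize}

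So the stated error term rests entirely on the result you fall back on, and that citation is the paper's whole proof. The paper applies \cite[Thm.\,1, Thm.\,2]{MoreeFACM} with $d=4$ and $g=g_0=2$. Alternatively it uses \cite[Thm.\,2]{Moreediv} to get $\frac{7}{12}\Li(x)$ for the primes with $\ord_p(4)$ odd, then passes to the complement among odd primes.
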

\begin{proof}
Since $\mc{O}(p)=0$ is equivalent to $2\mid\ord_p(4)$, by applying \cite[Thm.\,1, Thm.\,2]{MoreeFACM} with
$d=4$, $g=g_0=2$, $h=1$ and $D(g_0)=8$. We have 
$\gamma=1$ and so $\epsilon_2(4)=\frac{1}{4}$ and hence obtain \eqref{P0counting}.

Alternatively, by applying \cite[Thm.\,2]{Moreediv} with $r=0$, $\psi=4=2^{2^1}$ so that $u=2$, $\lambda=1$, and $\varepsilon=1$, we obtain
\begin{equation}
\label{eengelukje}    
\#\{p\leq x:p>2,\ \ord_p(4)\ \text{is odd}\}=\frac{7}{12}\Li(x)+O\left(\frac{x(\log\log x)^4}{(\log x)^3}\right).
\end{equation}
Taking the complement among the odd primes gives \eqref{P0counting} on using the weak prime number estimate
$\sum_{p\le x}1=\Li(x)+O(x(\log x)^{-3})$.
\end{proof}

Existing results of Moree and Sury \cite{MSury} allow one to quantify how many primes in a prescribed arithmetic progression satisfy 
$\mc{O}(p)=0$.
\begin{thm} \label{dens zero prop}
Let $a$ and $d$ be positive coprime integers, and let
\[
P_{0}(a,d)(x) \coloneq\#\{p\leq x : p\equiv a\pmod{d},\ \mc O(p)=0\}.
\]
Then there exists a rational number $\delta_{0}(a,d)\geq0$ such that
\[
P_{0}(a,d)(x)=\frac{\delta_{0}(a,d)}{\varphi(d)} \Li(x)+O\Bigl(\frac{x\log\log x}{(\log x)^{7/6}}\Bigr),
\]
where $\delta_0(a,d)\geq0$ is rational and can be determined explicitly from Tables~4 and~5 of~\cite{MSury}; In the notation of that paper, one specializes their parameters $a,b$ (which we denote here by $A,B$ to avoid confusion with the residue class $a \pmod d$ to $A=4, B=1$.
\end{thm}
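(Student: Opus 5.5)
The plan is to reduce the statement to a known Chebotarev-type result about primes in an arithmetic progression for which a prescribed power of two divides the order. By Remark~\ref{firstremark}, for an odd prime $p$ we have $\mc{O}(p)=0$ if and only if $\ord_p(4)$ is even. Since $\ord_p(4)=\ord_p(2)/\gcd(2,\ord_p(2))$, this holds if and only if $4\mid \ord_p(2)$. Hence
\[
P_0(a,d)(x)=\#\{p\le x:\ p\equiv a\pmod d,\ 2\mid \ord_p(4)\}+O(1).
\]
This is exactly the counting function treated by Moree and Sury \cite{MSury}: primes $p\equiv a\pmod d$ such that $\ord_p(A/B)$ is divisible by a fixed integer. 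Here we specialize to $A=4$, $B=1$ and divisor $2$.

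The key steps are as follows.

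\begin{enumerate}
\item \textbf{Decompose by the $2$-adic valuation of $p-1$.} Write $p-1=2^{j}m$ with $m$ odd and $j\ge1$. Then $2\mid\ord_p(4)$ if and only if $4$ is not a $2^{j-1}$-th power residue modulo $p$. Equivalently, $2$ is not a $2^{j}$-th power residue up to sign, which amounts to $\nu_2(\ord_p(2))\ge2$.
\item \textbf{Fix $j$ and pass to a Kummer field.} For fixed $j$, the condition is a splitting condition on $p$ in the field $\Q(\zeta_{2^{j+1}},2^{1/2^{j}})$, intersected with the condition $p\equiv a\pmod d$. That extra condition is a splitting condition in $\Q(\zeta_d)$.
\item \textbf{Apply Chebotarev in effective form.} The effective Chebotarev density theorem, which is unconditional for fixed $j$ since the fields have bounded degree, gives each piece a density. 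That density is a rational number computable from the degrees in Lemma~\ref{lem:graad} and from the intersection $\Q(\zeta_d)\cap\Q(\zeta_{2^{j+1}},2^{1/2^{j}})$. The only relevant entanglements are $\sqrt2\in\Q(\zeta_8)$ and $\zeta_{2^{j}}\in\Q(\zeta_d)$ when $2^{j}\mid d$.
\item \textbf{Sum over $j$.} For $j\le J$ we sum the Chebotarev contributions. The tail consisting of primes with $p\equiv1\pmod{2^{J}}$ is bounded by Brun--Titchmarsh, giving $\ll x/(2^{J}\log x)$.
\item \textbf{Choose the cutoff.} Taking $J\asymp \log\log x$ balances the tail against the Chebotarev error terms, since the latter grow with the field degree $\asymp 4^{J}\varphi(d)$.
\end{enumerate}

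The resulting density is a geometric series in $2^{-j}$ with rational coefficients. It is therefore rational, and writing it as $\delta_0(a,d)/\varphi(d)$ is just normalization. This matches the entries of Tables~4 and~5 of \cite{MSury}. For $d=1$ the result recovers $5/12$, consistent with Proposition~\ref{512}, which serves as a sanity check.

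The main obstacle is the error term $O(x\log\log x/(\log x)^{7/6})$, which must be uniform in the growing degree without GRH. Unconditional effective Chebotarev, in the Lagarias--Odlyzko form, only gives a saving in $\exp(-c\sqrt{\log x/n_K})$ for fields of degree $n_K$. For this reason I would not reprove the result but quote \cite[Thm.]{MSury} directly with $A=4$, $B=1$. Their method, following Wiertelak, handles all $j$ at once via Siegel--Walfisz for the progression $p\equiv a\pmod{\lcm(d,2^{j})}$ combined with power-residue characters. That method yields precisely this error term. The proof then consists of checking that their hypotheses are met by $A=4$, $B=1$ and reading off $\delta_0(a,d)$.
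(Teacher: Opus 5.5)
Your proposal is correct and is essentially the paper's proof. Both reduce $\mc{O}(p)=0$ to $2\mid\ord_p(4)$ and then quote Moree--Sury with $A=4$, $B=1$, setting $\delta_0(a,d)=\varphi(d)\delta_{4,1}(a,d)$; the paper phrases the condition as $p$ dividing some term of $(4^j+1)_{j\ge1}$, which is the literal setting of \cite[Thm.~2]{MSury} and is trivially equivalent for odd $p$. Your Chebotarev sketch is not needed, but its first step has a slip: $\ord_p(4)$ is odd if and only if $4$ is a $2^{j}$-th (not $2^{j-1}$-th) power residue modulo $p$, as $p=5$ shows.
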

\begin{proof}
By Proposition~\ref{O(n) zero thm}, for every odd prime $p$ we have $\mc{O}(p)=0$ if and only if $p$ divides $4^j+1$ for some $j\ge1$.  Hence $P_0(a,d)(x)$ is exactly the counting function for primes $p\equiv a\pmod d$ dividing at least one term of the sequence $(4^j+1)_{j\ge1}$.  Applying \cite[~Theorem 2]{MSury} with $A=4$ and $B=1$ yields
\[
P_0(a,d)(x)=\delta_{4,1}(a,d)\Li(x)+O\Bigl(\frac{x\log\log x}{(\log x)^{7/6}}\Bigr),
\]
where $\delta_{4,1}(a,d)$ is rational and is determined explicitly by Tables~4 and~5 of that paper.  Setting
$\delta_0(a,d) \coloneq \varphi(d)\delta_{4,1}(a,d)$  gives the asserted formula.  The displayed values of $\delta_0(a,d)$ follow by specializing the tables to $A=4$, $B=1$. 
\end{proof}
Put $\delta \coloneq \nu_{2}(d)$ and $\gamma=\nu_{2}(a-1)$, with the convention that $\gamma= \infty$ when $a=1$. For example, the tables specialize in our case as follows:
\[
\delta_{0}(a,d)
=
\begin{cases}
5/12
& \textrm{if } \delta\leq 1;\\[3pt]
5/6
& \textrm{if } \delta=2 \textrm{ and } \gamma\geq 2;\\[3pt]
0
& \textrm{if } \delta=2 \textrm{ and } \gamma=1.
\end{cases}
\]

\subsection{The case \texorpdfstring{$k=1$}{k=§}}
An odd prime $p$ is said to be \emph{$B$-irregular} if $p$ divides the numerator of at least one of the Bernoulli numbers $B_{2},B_{4},\dotsc,B_{p-3}$, 
and \emph{$B$-regular} otherwise. 
The first fifteen $B$-irregular primes are
\begin{align*}
37, 59, 67, 101, 103, 131, 149, 157, 233, 257, 263,  271,  283, 293, 307.
\end{align*}

Kummer, who introduced the notion of $B$-irregularity, proved that Fermat's Last Theorem is true for 
$B$-regular prime exponents $p$. He did this by relating $B$-regularity to a certain cyclotomic class number indivisibility.

 The \emph{Genocchi numbers} $G_n$ are related to Bernoulli numbers $B_n$ by the formula 
\[
G_n = 2(1-2^n)B_n. 
\]
An odd prime $p$ is said to be \emph{$G$-irregular} if it divides at least one of the integers 
$G_2,G_4,\dotsc,$ $G_{p-3}$, and
\emph{$G$-regular} if it does not.
The first fifteen $G$-irregular primes are  
\begin{align*}
17, 31, 37, 41, 43, 59, 67, 73, 89, 97, 101, 103, 109, 113, 127. 
\end{align*}
Hu et al.\,\cite{HKMS} related the $G$-regularity notion to a class number divisibility.
In addition they showed the following easy result relating $B$-regularity and $G$-regularity.
\begin{thm}  
\label{thm:GB}
An odd prime $p$ is $G$-regular if and only 
if it is $B$-regular and satisfies $\ord_p(4)=(p-1)/2$.  
\end{thm}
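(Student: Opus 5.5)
The plan is to unwind the definition of $G$-regularity through the relation $G_n=2(1-2^n)B_n$, and to separate the two factors $1-2^n$ and $B_n$ by a $p$-adic valuation argument. The key input is the von Staudt--Clausen theorem. It says the denominator of $B_n$ (for even $n\ge2$) is the product of the primes $q$ with $(q-1)\mid n$. For even $n$ with $2\le n\le p-3$ we have $(p-1)\nmid n$, so $p$ does not divide the denominator of $B_n$, i.e.\ $\nu_p(B_n)\ge 0$. Since $p$ is odd, $\nu_p(2)=0$, and therefore
\[
\nu_p(G_n)=\nu_p(2^n-1)+\nu_p(B_n),
\]
with both summands nonnegative. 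Hence $p\mid G_n$ if and only if $p\mid 2^n-1$ or $p$ divides the numerator of $B_n$.

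Taking this over all even $n\in\{2,4,\dotsc,p-3\}$, $p$ is $G$-regular if and only if two conditions hold. First, $p$ is $B$-regular. Second, $2^n\not\equiv1\pmod p$ for every even $n$ with $2\le n\le p-3$. Writing $n=2m$, the second condition says $4^m\not\equiv 1\pmod p$ for $1\le m\le (p-3)/2$, that is, $\ord_p(4)\ge (p-1)/2$.

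To finish, note that $4=2^2$ lies in the subgroup $(\F_p^\times)^2$ of order $(p-1)/2$, so $\ord_p(4)\mid (p-1)/2$. The inequality $\ord_p(4)\ge(p-1)/2$ is therefore equivalent to $\ord_p(4)=(p-1)/2$, which gives both directions of Theorem~\ref{thm:GB} at once.

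The edge case $p=3$ needs a quick check: the index range is empty, so $3$ is vacuously both $B$- and $G$-regular, and indeed $\ord_3(4)=1=(3-1)/2$. The only step needing care is the $p$-integrality of $B_n$ for $n\le p-3$, since it is what allows the divisibility to split across the product $(1-2^n)B_n$. Von Staudt--Clausen settles this directly; without it, a factor of $p$ in $2^n-1$ could be cancelled by a $p$ in the denominator of $B_n$. I do not expect any further obstacle.
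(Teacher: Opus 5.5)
Your proof is correct and complete. Von Staudt--Clausen gives $\nu_p(B_n)\ge 0$ for even $2\le n\le p-3$, so $p\mid G_n$ splits into $p\mid 2^n-1$ or $p$ dividing the numerator of $B_n$. The absence of $4^m\equiv 1\pmod p$ for $1\le m\le (p-3)/2$, together with $\ord_p(4)\mid (p-1)/2$, is then exactly $\ord_p(4)=(p-1)/2$.

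The paper gives no proof of its own; it quotes the statement from Hu et al.\ as an easy result. Your argument is the natural direct one (factor $G_n=2(1-2^n)B_n$ and use $p$-integrality of $B_n$), which is presumably what ``easy'' refers to.
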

Moree and Sgobba \cite[Proposition~4]{MS1} subsequently placed this characterization in the broader setting of $\ell$-Genocchi numbers defined by
$\ell(1-\ell^n)B_n$. They showed that, for a prime $\ell$ and an odd prime $p\ne\ell$, the prime $p$ is $\ell$-Genocchi regular if and only if it is $B$-regular and $\ord_p(\ell^2)=\frac{p-1}{2}$. They also investigated, using Artin-type methods, the distribution in
arithmetic progressions of primes satisfying this order condition. These results motivate the study of the sets $P_1(a,d)$ and their densities.

Theorem~\ref{thm:p1ad} is an easy consequence of the work of Hu et al.~\cite{HKMS}.

\begin{proof}[Proof of Theorem~\ref{thm:p1ad}]
By Corollary \ref{Opequivalence} we have $\mc{O}(p)=1$ if and only if $p\equiv 3\pmod{4}$ and $\ord_p(4)=(p-1)/2$. Put $D=\lcm(d,4)$. If $a\not\equiv 3 \pmod{(d,4)}$, then the congruences $3\pmod{4}$ and $a\pmod{d}$
are incompatible. Hence $P_1(a,d)(x)=0$, which agrees with the definition of $c(d,a)$. Suppose now that
$a\equiv 3 \pmod{(d,4)}$. By the Chinese remainder theorem, there is a unique residue class
$b\pmod{D}$ such that $b\equiv a\pmod{d}$ and $b\equiv 3\pmod{4}$.
Therefore,
\[
P_1(a,d)(x)= \# \Bigl\{p:p\equiv b\pmod{D},\ \ord_p(4)=\frac{p-1}{2}\Bigr\}.
\]
Since $4\mid D$ and $b\equiv 3\pmod{4}$, Proposition~1.12 of
\cite{HKMS} gives, under GRH,
\begin{equation}
\label{finalasymptotic}    
P_1(a,d)(x)=A \frac{R(D,b)}{\varphi(D)}\frac{x}{\log x}+
O_d\Bigl(\frac{x\log\log x}{(\log x)^2}\Bigr).
\end{equation}
Here the correction factor occurring in \cite[Proposition~1.12]{HKMS} is equal to $1$, since
$4\mid D$ and $b\equiv3\pmod{4}$.

We next compare $R(D,b)$ with $R(d,a)$. For every odd prime $p\mid d$, the congruence $b\equiv a\pmod{d}$ gives $p\mid b-1$ if and only if $p\mid a-1$. Moreover, since $b\equiv 3\pmod{4}$, the contribution of $p=2$ to $R(D,b)$ is $1$ ($1/2$ to the first product and $2$ to the second product). It follows that $R(D,b)=R(d,a)$. Finally,
\[
\frac{\varphi(d)}{\varphi(D)}=
\begin{cases}
1/2 &4\nmid d;\\[1mm]
1&4\mid d.
\end{cases}
\]
Hence
\[
\frac{R(D,b)}{\varphi(D)}=\frac{c(d,a)R(d,a)}{\varphi(d)},
\]
where $c(d,a)$ is as defined in the statement of the proposition. Substitution in \eqref{finalasymptotic} then yields \eqref{eq:underGRH}.\qedhere

\renewcommand{\qed}{}
\end{proof}

\section{The distribution of prime values of \texorpdfstring{$O(p)$}{Op}} \label{prime-values sec}
Motivated by a short note of Sun \cite{Sun}, we consider the frequency with which $\mc{O}(p)$ is itself a prime number. Sun studies the index
\begin{equation}
\label{LP}    
  \ell_p=[\F_p^\ast: \langle-1,2\rangle]=\frac{p-1}{2\ord_p(4)}.
\end{equation}
We note that the second equality in \eqref{LP} is explained in Remark \ref{rem:equalindices}.
Sun, under GRH, determines the ratio of the densities corresponding to odd prime values of $\ell_{p}$ to those corresponding to $\ell_{p}>2$ and shows 
it is between $0.462$ and $0.463$. 
More precisely, he showed that the ratio is 
$6AC/(4-7A)$ with 
$$C\coloneq \sum_{\substack{q\geq 3\\ q\ \textrm{prime}}}\frac{q^2-1}{q^2(q^2-q-1)}
=   \sum_{\substack{q\geq 3\\ q\ \textrm{prime}}} \frac{1}{q^2}\Bigl(1+\frac{q}{q^2-q-1} \Bigr).$$
Using the algorithms (or the Pari/Gp functions) described in Section 5  of Belabas-Cohen \cite{BeCo Book}, it is easy to obtain
\begin{equation}
\label{Cvalue}
  C=0.28488937295069592789537460144246417687\dotsc,
\end{equation}
giving the much more precise value $0.4624263335000077349204\dotsc$ for Sun's ratio.

Theorem~\ref{main} allows us to likewise determine, under GRH, an analogous ratio $R$ for $\mc{O}(p)$. It is the ratio of primes $p$ for which
$\mc{O}(p)$ is prime to those primes $p$ for which $\mc{O}(p)>2$.
\begin{prop}
\label{propR}
Assume GRH. Define
\[
R\coloneq\frac{\sum\limits_{q\geq 3;\ q\ \mathrm{prime}}\delta_q}{\sum\limits_{k\geq 3}\delta_k}
\]
We have
\[
  R=\frac{12AC}{7-12A} = 
  \frac{12A}{7-12A}\sum_{\substack{q\geq 3\\ q\ \mathrm{prime}}}\frac{q^2-1}{q^2(q^2-q-1)}
  =0.5088227112380472975588\dotsc.
  \]
\end{prop}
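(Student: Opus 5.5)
The plan is to evaluate numerator and denominator separately from the explicit densities in Theorem~\ref{main}, and then simplify.

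\emph{Numerator.} For an odd prime $q$, Theorem~\ref{main} gives
\[
\delta_q=A\,E(q)=\frac{A}{q^2}\cdot\frac{q^2-1}{q^2-q-1}.
\]
Summing over odd primes $q$ therefore yields $A\,C$, with $C$ as in \eqref{Cvalue}.

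\emph{Denominator.} I would compute the total mass and subtract the small cases:
\[
\sum_{k\ge3}\delta_k=\sum_{k\ge1}\delta_k-\delta_1-\delta_2 .
\]
Here $\delta_2=0$ (since $2\equiv 2\pmod 4$) and $\delta_1=A$. Since $\mc{O}(p)$ is finite for every odd $p$, the sets $P_k$ with $k\ge0$ partition the odd primes. So the expected identity is
\[
\sum_{k\ge1}\delta_k = 1-\delta_0 = 1-\tfrac{5}{12}=\tfrac{7}{12},
\]
using Proposition~\ref{512}. Indeed, \eqref{eengelukje} states directly that primes with $\ord_p(4)$ odd have density $7/12$.

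The main obstacle is justifying that the densities $\delta_k$ sum to the density of the union. This is a countable union, and natural density is only finitely additive. I would handle it in one of two ways.

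\begin{itemize}
\item \emph{Tail bound.} Primes with $\mc{O}(p)\ge K$ have $2K\mid p-1$ and $p$ splitting in $\Q(\zeta_{2K},4^{1/2K})$-type fields. An elementary bound (Brun--Titchmarsh for $p\equiv1 \pmod{2k}$ over $k\in(K,\sqrt{x}/\log^2 x)$, plus the standard Hooley bound under GRH for the middle range, and the trivial bound via divisors of $4^m-1$ with $m<\sqrt{x}/\log^2x$ for large $k$) shows the upper density of this tail tends to $0$ as $K\to\infty$. This is the usual argument in Hooley/Wagstaff-type proofs.
\item \emph{Direct evaluation.} Alternatively, verify $\sum_{k\ge1}\delta_k=7/12$ by summing the Euler-product formula $A\,E(k)\cdot\{1,0,2/3\}$. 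Write $k=2^e m$ with $e=0$ or $e\ge2$, and factor the sum over odd $m$ as the product over odd primes $q$ of
\[
1+\frac{q^2-1}{q^2-q-1}\sum_{j\ge1}q^{-2j}.
\]
The $2$-part gives $1+\tfrac23\sum_{e\ge2}4^{-e}\cdot 3$. The product should collapse against $A$ to $7/12$; I would check this as a consistency computation.
\end{itemize}

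Given $\sum_{k\ge3}\delta_k=\tfrac7{12}-A$, we obtain
\[
R=\frac{AC}{\tfrac{7}{12}-A}=\frac{12AC}{7-12A}.
\]
Inserting the numerical values of $A$ and of $C$ from \eqref{Cvalue} then gives $0.50882\dots$.
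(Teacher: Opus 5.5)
Your proposal is correct and is essentially the paper's proof. The numerator is $AC$ by Theorem~\ref{main}, and the denominator is $\frac{7}{12}-A$ via Proposition~\ref{512} together with $\delta_1=A$ and $\delta_2=0$. Your Euler-product evaluation is exactly the sanity-check Remark that follows the paper's proof: the local factors are $\frac{q(q-1)}{q(q-1)-1}$ for odd $q$, whose product is $\frac{1}{2A}$, and $\frac{7}{6}$ at $q=2$.

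That direct evaluation already proves $\sum_{k\ge1}\delta_k=\frac{7}{12}$ purely algebraically, so it disposes of the countable-additivity point you rightly raise (which the paper's main argument passes over). Your tail-bound bullet, by contrast, would not work as written. Brun--Titchmarsh summed over all $k\in(K,\sqrt{x}/\log^2 x)$ gives only a bound of order $x$, because $\sum_k 1/\varphi(k)$ diverges. In that range you would need the GRH--Chebotarev main term of size $\frac{x}{k\varphi(k)\log x}$, handled as in Hooley's argument.
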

\begin{proof}
For an odd prime $q$, Theorem \ref{main} gives
\[
  \delta_q= A\frac{q^2-1}{q^2(q^2-q-1)},
\]
showing that the sum in the numerator of $R$ equals $AC$.
It remains to determine the denominator of $R$.
Note that $\sum_{k\geq 1}\delta_k$ is merely the density of primes $p$ for which $\ord_p(4)$ is odd, which by 
Proposition \ref{512} is $7/12$. Since $\delta_1=A$ and $\delta_2=0$, we obtain
\[
  \sum_{k\geq3}\delta_k=\frac7{12}-A,
\]
leading to the claimed expression for $R$. 
Using a sufficiently precise numerical value of the Artin constant and \eqref{Cvalue}, the indicated numerical approximation  of $R$ is obtained.
\end{proof}

\begin{rem} As a sanity check we can try to compute $\sum_{k\ge 1}\delta_k$ directly, making use
of the multiplicativity of $E(k)$.

For an odd prime $q$ we have
\[
  1+\sum_{r\geq1}E(q^r) =1+\frac{q^2-1}{q^2-q-1}\sum_{r\geq1}\frac1{q^{2r}}=\frac{q(q-1)}{q(q-1)-1}.
\]
Therefore, we obtain
\[
  \prod_{q\ \textrm{odd}}\Bigl(1+\sum_{r\geq1}E(q^r)\Bigr)=\prod_{q\ \textrm{odd}} \Bigl(1-\frac{1}{q(q-1)}\Bigr)^{-1}
  =\frac{1}{2A}.
\]
For $q=2$, Theorem \ref{main} shows that the contribution with $\nu_2(k)=1$ vanishes, whereas the contribution with $\nu_2(k)\geq2$ is multiplied by $2/3$. Thus, the corresponding local factor is
\[
  1+\frac23\sum_{r\geq2}\frac{3}{4^r}=1+\frac{1}{6}=\frac{7}{6}.
\]
It follows that
\[
  \sum_{k\geq1}\delta_k=A\cdot\frac{1}{2A}\cdot\frac{7}{6}=\frac{7}{12}.
\]
\end{rem}

\begin{proof}[Proof of Theorem~\ref{CAC-prime-values thm}]Hsia, Li and Sun~\cite{HLScyclo} proved that if $4\nmid\ord_p(2)$ and $[\F_p^\times:\langle-1,2\rangle]$ is an odd prime, then the upper bound in \eqref{M(p) ineq} is attained. Since $\mc{O}(p)>0$ implies
\[
4\nmid\ord_p(2) \qquad\text{and}\qquad \mc{O}(p)=[\F_p^\times:\langle-1,2\rangle],
\]
their result, together with Proposition \ref{propR}, yields the theorem.
\end{proof}

\section{The distribution of \texorpdfstring{$\mc{O}(n)$}{O(n)} for integer \texorpdfstring{$n$}{n}: proof of Theorem  \ref{density O(m) is zero thm}}
\label{Onintegerdistribution}

\begin{proof}[Proof of Theorem \ref{density O(m) is zero thm}]
First assume that $e=0$.
By Lemma \ref{lem-even}  we have $\mc{O}(m)=0$ if and only if $\ord_p(4)$ is even for every prime divisor $p$ of $m$. The integers counted by $N_{0}(x)$, together with $1$, form the completely multiplicative set consisting of the integers all of whose prime divisors belong to $P_{0}$. Applying
\cite[Thm.~4]{Moreediv} with $\tau=\frac{5}{12}$ and $g=4$, we obtain
\begin{equation}
\label{N0xestimate}    
N_{0}(x)=C_{0}\frac{x}{(\log x)^{7/12}} 
+O\Bigl(
\frac{x(\log\log x)^5}{(\log x)^{19/12}}\Bigr)
\end{equation}
for some constant $C_{0}>0$. Applying Proposition 4 of Finch, Martin and Sebah \cite{FMS}, we then obtain the stated formula for $C_0$ (the prime counting
condition of the proposition is satisfied by \eqref{eengelukje}).
Using $t=10^{11}$ one can see that $C_0= 0.32235 \dotsc$
(such decimals are confirmed by comparison with the cases 
$t=10^{j}, j=5,6,7,8,9,10$).

Next assume that $e\ge 1$. Let $p$ be a prime such that $p\equiv 3\pmod{4}$, $\ord_p(2)=p-1$ and $\ord_{p^2}(2)=p(p-1)$, the first few being $p=3,11,19,59,67,83,107,131,139,163, \dots$. Then we have $\ind_p(2)=\ind_{p^2}(2)=1$. 
It then follows by elementary number theory that $\ind_{p^j}(2)=1$ for all $j\geq1$ (see Theorem 2 of \cite{IrRo82}). 
Moreover, for $j\geq1$ we have $\ord_{p^j}(2)=(p-1)p^{j-1} \equiv 2\cdot 3^{j-1}\equiv 2\pmod{4}$.
Hence by Theorem \ref{thm-FLS} we have
\[ \mc{O}(p^e) = \sum_{i=1}^e \ind_2(p^i) = e. \]
In fact, there are infinitely many $n\geq1$ such that $\mc{O}(n)=e$: it suffices to take $n=p^ep_1^{e_1}\dotsm p_s^{e_s}$, where $p$ is as above, $p_1,\dotsc,p_s$ are prime numbers such that $\ord_{p_i}(4)$ is even for all $i$. Then by Lemma \ref{lem-even} we have $\mc{O}(n)=\mc{O}(p^{e})=e$.
Consequently $N_e(x)\ge N_0(x/p^e)$, 
which on invoking the first part of the theorem leads to the claimed lower bound on $N_e(x)$. 
\end{proof}
\begin{rem}
Using the supplementary law of quadratic reciprocity, see \eqref{QR}, it is easy to see that the primes $p=3,11,19,\dotsc$ listed above actually have to satisfy $p\equiv 3\pmod{8}$.
\end{rem}

\section{Proof of Theorem \ref{tight equi code thm}} \label{pf of thm 1 sec}
\label{tightequicodeproof}
\begin{proof}
We first consider prime lengths. By Proposition~\ref{tight equi-diff prop}, if $p\neq 3$ is an
odd prime, then a tight equi-difference conflict-avoiding code of length $p$ exists if and only if $\mc{O}(p)=0$. The prime $p=3$ also gives a tight equi-difference code, since $3\mid p$ and $\mc{O}(3)=1$. 
Thus by Corollary \ref{no of odd cycle cor} we have
$P(x)=1+\#\{p\leq x : 2\mid\ord_p(4)\}$  for $x\ge 3$. 
It then follows by \eqref{P0counting} that
\[
P(x)=\frac{5}{12}\frac{x}{\log x}+O\Bigl(\frac{x(\log\log x)^4}{(\log x)^{3}} \Bigr).
\]
We next consider odd integer lengths. By Proposition \ref{tight equi-diff prop} we infer that
$N(x)=N_{0}(x)+N_{0}(x/3)$,
with $N_0(x)$ defined as in Theorem \ref{density O(m) is zero thm}.
Invoking the estimate \eqref{N0xestimate}, we obtain
\[
N(x)=\frac{4C_{0}}{3}\frac{x}{(\log x)^{7/12}}+O\Bigl(\frac{x(\log\log x)^5}{(\log x)^{19/12}}\Bigr),
\]
with $C_0= 0.32235 \dotsc$
Thus $C=4C_{0}/3=0.4298\dotsc$, completing the proof.\end{proof}

\noindent \textbf{AI Disclosure}. The contributions of P.M., S.P,  and P.S. to this article were made without the use of AI, K. B. used ChatGPT (OpenAI) for language editing and to improve clarity and presentation of the text. \\

\noindent \textbf{Acknowledgment}. 
This paper was initiated by the second author.
The first and third authors joined the project during their stay at the Max Planck Institute for Mathematics (from March to May 2025).  They all thank the staff of the institute for the hospitality and good working conditions. The first author acknowledges financial support from the IIT Goa Start-up Grant (Grant No.~2026/SG/KB/063).
We thank  J-P. Allouche and P. Sol\'e for helpful feedback. A.~Languasco, starting in a later stage in the project, very kindly provided us with
continuous feedback. Furthermore, he numerically approximated the constant $C_0$ in 
Theorem \ref{density O(m) is zero thm}.\\

\end{document}